\newtheorem{theorem}{Theorem}[section]
\newtheorem{corollary}[theorem]{Corollary}
\newtheorem{proposition}[theorem]{Proposition}
\newtheorem{lemma}[theorem]{Lemma}
\newtheorem{remark}[theorem]{Remark}
\newtheorem{definition}[theorem]{Definition}
\newtheorem{problem}{Problem}
\def\ie{{\em i.e.,} }
\def\eg{{\em e.g.} }
\newfont\bbf{msbm10 at 12pt}
\def\eps{\varepsilon}
\def\phi{\varphi}
\def\R{{\mathbb R}}
\def\N{{\mathbb N}}
\def\Z{{\mathbb Z}}
\def\cA{{\mathcal A}}
\def\cR{{\mathcal R}}
\def\E{{\mathcal E}}
\def\F{{\mathcal F}}
\def\U{{\mathcal U}}
\def\V{{\mathcal V}}
\def\W{{\mathcal W}}
\def\NL{N_L}
\def\NR{N_R}
\def\orb{\mbox{\rm orb}}
\def\theta{\vartheta}
\def\UIL{\underleftarrow\lim([0,1],T)}
\def\CUIL{\underleftarrow\lim([c_2,c_1],T)}
\def\ovl{\overleftarrow}
\def\ovr{\overrightarrow}
\def\eps{\varepsilon}
\def\dist{\mbox{\rm dist}}
\def\diam{\mbox{\rm diam}}
\begin{document}

\title{Folding points of unimodal inverse limit spaces}

\author{Lori Alvin, Ana Anu\v{s}i\'c, Henk Bruin, Jernej \v{C}in\v{c}}
\address[L.\ Alvin]{Department of Mathematics, Furman University, 3300 Poinsett Highway, Greenville, SC 29613, USA}
\email{lori.alvin@furman.edu}
\address[A.\ Anu\v{s}i\'c]{Departamento de Matem\'atica Aplicada, IME-USP, Rua de Mat\~ao 1010, Cidade Universit\'aria, 05508-090 S\~ao Paulo SP, Brazil}
\email{anaanusic@ime.usp.br}
\address[H.\ Bruin]{Faculty of Mathematics, University of Vienna,
Oskar-Morgenstern-Platz 1, A-1090 Vienna, Austria}
\email{henk.bruin@univie.ac.at}
\address[J.\ \v{C}in\v{c}]{National Supercomputing Centre IT4Innovations, Division of the University of Ostrava, Institute for Research and Applications of Fuzzy Modeling, 30. dubna 22, 70103 Ostrava, Czech Republic}
\email{jernej.cinc@osu.cz}
\thanks{AA was supported in part by Croatian Science Foundation under the project IP-2014-09-2285.
	She also gratefully acknowledges the Ernst Mach Stipend ICM-2017-06344 from the   \"Osterreichischer 
	Austauschdienst (OeAD).
	HB and J\v{C} were supported by the FWF stand-alone project P25975-N25. J\v{C} was partially supported by NSERC grant RGPIN 435518 and University of Ostrava grant IRP201824 “Complex topological
	structures”.
We gratefully acknowledge the support of the bilateral grant \emph{Strange Attractors and Inverse Limit Spaces},  \"Osterreichische
Austauschdienst (OeAD) - Ministry of Science, Education and Sport of the Republic of Croatia (MZOS), project number HR 03/2014.}
\date{\today}

\subjclass[2010]{37B45, 37E05, 54H20}
\keywords{unimodal map, inverse limit space, endpoint, folding point, inhomogeneity}

\begin{abstract}
We study the properties of folding points and endpoints of unimodal inverse limit spaces.
We distinguish between non-end folding points and three types of end-points (flat, spiral and nasty)
and give conditions for their existence and prevalence. Additionally, we give a characterisation of tent inverse limit spaces for which the set of folding points equals the set of endpoints.
\end{abstract}

\maketitle

\section{Introduction}\label{sec:intro}
In 1967, Williams proved that hyperbolic one-dimensional attractors can be represented as inverse limits of maps on branched manifolds and that every point has a neighbourhood homeomorphic to the product of a Cantor set and an open arc \cite{Wi}. In this paper we study attractors which contain points that do not have such a neighbourhood (called {\em folding points}), the existence of which usually indicates the lack of hyperbolicity, or more generally, foliation of the space by unstable manifolds. For simplicity we conduct our study only for {\em unimodal inverse limits} which naturally arise as attractors of certain planar homeomorphisms with the action conjugated to the shift map, see \cite{BaMa2}. Every unimodal inverse limit contains folding points. Thus, the structure of (un)stable sets is considerably more complicated than in Williams' solenoids and it is still not completely understood, especially in cases when there exist spiral or nasty endpoints (see below).

Unimodal inverse limits link one dimensional dynamics to the dynamics of some well known planar systems, \eg H\'enon maps. It was shown in \cite{Hen} that for a dense set of parameters of maps in the H\'enon family, the attracting sets are homeomorphic to inverse limits of unimodal maps of the interval. In those cases, all but finitely many points are locally homeomorphic with the product of a Cantor set and an arc \cite{BaHo}, implying the pseudo-Anosov type system. 
On the other hand, it was shown in \cite{BBD} that for a dense $G_\delta$ set of parameters in the symmetric tent family, the inverse limit space is nowhere locally homeomorphic to the product of a Cantor set and an arc; in this case not only does every open set contain a homeomorphic copy of the entire space, but every set also contains a homeomorphic copy of every inverse limit space appearing in the tent family.

In addition to H\'enon attractors, Lozi~\cite{Loz} and Lorenz attractors~\cite{Lorenz} are prominent examples of chaotic attractors in dynamical systems. Although not all examples of these attractors arise as inverse limits of symmetric tent maps, we may still draw some parallels between unimodal inverse limits and H\'enon attractors, Lozi attractors and Poincar\'e sections of Lorenz attractors (for details on Poincar\'e sections of Lorenz attractors see \eg \cite{Gu}). For example, one can ask if all the types of inhomogeneities that arise in inverse limits of tent maps appear in these families of parametrised attractors as well.  This motivates us to first better understand the inhomogeneities from unimodal inverse limit spaces which are easier to approach.

Let  $T:[0,1]\to [0,1]$ be a unimodal map with critical point $c$ and consider the inverse limit space $X = \UIL$. Since $X$ is \emph{chainable} (\ie it admits an $\eps$-mapping on the interval $[0,1]$ for every $\eps>0$), we call a point $x \in X$  an {\em endpoint} if for any two subcontinua
$A,B \subset X$ containing $x$, either $A \subset B$ or $B \subset A$.
We denote the sets of folding points and endpoints by $\F$ and $\E$, respectively.
Clearly $\E \subset \F$, and we call the points in $\F \setminus \E$ {\em non-end folding points}. 

The structure of folding points is simple if the critical point $c$ of $T$ has a finite orbit,
and (pre)period $n\in\N$. In this case there are $n$ folding points, which are endpoints if $c$ is periodic 
and non-end folding points if $c$ is strictly preperiodic, see \cite{BaMa}.
Every other point in $X$ has a Cantor set of (open) arcs as an (open) neighbourhood.
The structure is more complicated if $c$ has infinite orbit.
Let us call the maximal closed connected sets $A \subset X$ {\em basic arcs} if $\pi_0:A \to I$
is injective, where $\pi_0(x) = x_0$ is the projection on the zero-th coordinate
of $x \in X$. The inverse limit space is the union of its basic arcs, glued together
in an intricate way.
The purpose of this paper is to study and classify the properties of $\F$, $\E$ and $\F \setminus \E$
in terms of whether they are non-empty, (un)countable, or compact sets. We make a subdivision
of $\E$ into {\em flat endpoints} (those that are endpoints of non-degenerate basic arcs),
{\em spiral endpoints} (those that are endpoints of an arc in $X$, but lie in a degenerate basic arc)
and {\em nasty } endpoints (in  \cite{BdCH} called {\em solitary}), \ie those that do not belong to any non-degenerate arc in $X$.
The sets $\F \setminus \E$, $\E_F$, $\E_S$ and $\E_N$ are all preserved under the shift-homeomorphism.

Inverse limit spaces of unimodal maps are also interesting on their own. For instance, they were recently used in the series of papers by Boyland, de Carvalho and Hall \cite{BdCH1,BdCHInvent,BdCH2,3G} in order to give new examples of attractors in surface dynamics. This underlines the fact that the fine topological structure of these inverse limits needs to be better understood. A step in that direction was given recently in  \cite{BdCH} where the authors proved that the topological 
structure of tent inverse limit spaces substantially differs depending on whether the critical orbit 
of the underlying map is dense in the core or not in the following way: if the critical orbit is not dense in the core, then the set $X'\setminus\F$ contains a dense $G_{\delta}$ set and thus a typical point has a Cantor set of arcs neighbourhood.
On the other hand, if the critical orbit is dense in the core, then the set $\E$ contains a dense $G_{\delta}$ set and thus a typical point is an endpoint.
However, it is yet to be determined which one of the sets $\E_N$, $\E_F$ and $\E_S$ is topologically prevalent in $X$, see \cite{BdCH}.

The aim of the paper is to describe these types of folding points and their prevalence
in detail. We give conditions on when $\F \setminus \E$, $\E_F$, $\E_S$ and $\E_N$
are non-empty or equal to $\F$; if these sets are non-empty, we show in Proposition~\ref{prop:denseNonendFoldingpts}, Proposition~\ref{prop:uncountablyEndpts} and Corollary~\ref{cor:uncountablyEndpts} that they are all dense in $\F$. 
The set of endpoints, if infinite, is uncountable (see Proposition~\ref{prop:uncountablyEndpts}), whereas the 
non-end folding points can form a finite, countable or uncountable set, see \cite{GoKnRa}.
If the critical orbit is dense, then $X' = \F$ (see \cite{GoKnRa}),  where $X':=\underleftarrow\lim([T^2(c),T(c)],T)$.
Furthermore, Theorem~\ref{thm:FisE} gives a characterisation of those unimodal maps where $\mathcal{F}=\mathcal{E}$ using the concept of persistent recurrence, which extends results from \cite{Al} to full generality. 
Several other open questions are answered throughout the paper as well. 

 One of the basic questions that can be further investigated is whether there exists a general characterisation (of the parameters) of the H\'enon, Lozi, and Poincar\'e sections of Lorenz attractors for which there exist endpoints, or for which all the folding points are endpoints. Additionally, it is well known that for interesting parameters, these attractors are not locally connected. Hence it may happen that there exist endpoints which are not included in any non-degenerate arc of the attractor (a prominent example of such a continuum is the pseudo-arc, where every point possesses that property). 
Here we show the existence of such points in unimodal inverse limit spaces, and in particular they exist in inverse limits of infinitely renormalisable maps (cf. Theorem~\ref{thm:nastyIR}) and the self-similar inverse limits constructed from the dense $G_\delta$ set $\cA$ of parameters defined in \cite{BBD} (cf. Corollary~\ref{cor:nasty_pts_exist}). 
With further developments and adaptations, the techniques in this paper might be adapted to investigate the inhomogeneities of the mentioned attractors as well.

This paper is organised as follows. In Section~\ref{sec:prel} we give some preliminary definitions and notation
on symbolic dynamics and Hofbauer towers (which are besides standard topological methods the main two techniques used throughout the paper) and inverse limit spaces. Then, in Section~\ref{sec:subcon} we give preliminaries on subcontinua of unimodal inverse limit spaces.
Section~\ref{sec:endpoint} deals with general properties of folding points and with general properties of endpoints. Furthermore,
properties of flat, spiral and nasty endpoints are studied, respectively.
Finally, Appendix~\ref{Appendix} provides a positive answer to Question 6.4.8 of \cite{BB} and demonstrates that the dense $G_\delta$ set of parameters $\cA$ of Theorem 4 in \cite{BBD} corresponds precisely to the collection of tent maps with $\overline{\{T^{S_k}(c)\} }=[T^2(c),T(c)]$, which is a step towards improving the results from~\cite{BBD}.

\section{Preliminaries and notation}\label{sec:prel}
By definition, a \emph{continuum} is a nonempty compact connected metric space.
We will work with two families of unimodal maps on $I := [0,1]$;
the \emph{family of tent maps} $T_s(x) := \min\{ sx , s(1-x)\}$, $s \in (1,2]$, $x\in I$ and for some results also
the \emph{logistic family} $Q_a(x) := ax(1-x)$, $a \in [3,4]$ and $x\in I$. The latter family includes
infinitely renormalisable maps, i.e., there are nested cycles $J_i\subset I$ of periodic intervals of
period $p_i$ (where $p_{i+1}$ is a multiple of $p_i$) and the critical omega-limit 
set $\omega(c) = \cap_i J_i$ is a Cantor set. Such maps give rise to sequences of
nested non-trivial subcontinua and a Cantor set of nasty endpoints, see
Subsection~\ref{sec:nasty}.
We will use $T$ to denote the tent map (so $T = T_s$) unless otherwise stated when $T=Q_a$.
In either of the cases, the point $0$ is fixed and the \emph{critical point} is always $c = \frac12$. Write $c_k := T^k(c)$.
With our choice of parameters, $c_2 < c < c_1$. The interval $[c_2, c_1]$, called the {\em core}, 
is $T$-invariant, and contains a fixed point $r \neq 0$ in its interior. 

Let the \emph{inverse limit space}
$$
X :=\UIL=\{(\ldots, x_{-2}, x_{-1}, x_0): T(x_{-i})=x_{-(i-1)}, i\in\N\}
$$
be the collection of all backward orbits, equipped with the \emph{product metric}
$d(x,y) := \sum_{i \leq 0} 2^i |x_i-y_i|$. 
Denote by $\pi_{i}: X \to I$, $\pi_{i}(x) := x_{-i}$, 
the \emph{coordinate projections} for $i\in\N_0 := \N \cup \{ 0 \}$.
The \emph{shift homeomorphism} $\sigma:X \to X$ is defined by
$$
\sigma(\ldots, x_{-2}, x_{-1}, x_0) := (\ldots, x_{-2}, x_{-1}, x_0, T(x_0)).
$$
We can restrict $T$ to the core; this {\em core inverse limit space} $\CUIL$
will be denoted by $X'$. It is well-known that $X$ is the disjoint union of the core
inverse limit space $X'$ and a ray with endpoint $(\ldots 0,0,0)$,
and also that $X'$ is indecomposable if $T_s$ has slope $\sqrt{2} < s \leq 2$.

Since the situation regarding folding points when $\orb(c)$ is finite is straightforward
(as described in the introduction), we will assume from now on that $c$ has an infinite orbit.

For a set $A\subset\R$ we denote its closure, boundary and interior in $\R$ by $\overline{A}$, $\partial A$ and $A^\circ$,
 respectively.

\subsection{Cutting times}
We recall some notation from Hofbauer towers and kneading maps that we will use later in the paper, for more information on
that topic, see \eg \cite{BB}.
Define inductively $D_1 := [c, c_1]$, and 
$$
D_{n+1} := \begin{cases}
           [c_{n+1}, c_1] & \text{ if } c \in D_n;\\
           T(D_n) & \text{ if } c \notin D_n.
          \end{cases}
$$
We say that $n$ is a {\em cutting time} if $c \in D_n$, and we number them by
$S_0, S_1,  \ldots = 1, 2, \ldots$
The difference between consecutive cutting times is again a cutting time,
so we can define the kneading map $Q:\N \to \N_0$ as
$$
S_{Q(k)}:=S_k - S_{k-1}.
$$
Furthermore, we can check by induction that $D_n = [c_n, c_{\beta(n)}]$ (or $=[c_{\beta(n)}, c_n]$)
where $\beta(n) = n-\max\{ S_k : S_k < n\}$.
For every $k\in \N_0$ let $z_k\in [c_2,c)$ and $\hat z_k:= 1-z_k\in (c,c_1]$ be the {\em closest precritical points}, i.e.,
$T^{S_k}(z_k) = T^{S_k}(\hat z_k) = c$ and $T^j([z_k, \hat z_k]) \not\owns c$ for $0 < j < S_k$.

We establish Equation~(\ref{eq:zzz}) which we will use as a tool in several later places  in the paper. 
Let $\kappa := \min\{ i \geq 2 : c_i > c\}$ (which is finite because $s < 2$).
Define
\begin{equation}\label{eq:upsilon}
\Upsilon_k := [z_{k-1}, z_k) \cup  (\hat z_k, \hat z_{k-1}],
\end{equation}
for $k\in \N_0$. 
Here we set $\hat z_{-1} = c_1$ and $z_{-1} = c_2$. If $\kappa = 3$, then
$z_0 < c_2$, and in this case we define $z_0=c_2$.

Since $c$ is not periodic, $z_n \neq c_k \neq \hat z_n$ for all $n, k \geq 1$.
We argue that $c_{S_k}\in \partial\Upsilon_{Q(k+1)}$ for $k=0,1$, and
\begin{equation}\label{eq:zzz}
c_{S_k}\in \Upsilon_{Q(k+1)}^\circ = (z_{Q(k+1)-1}, z_{Q(k+1)}) \cup (\hat z_{Q(k+1)}, \hat z_{Q(k+1)-1})
\end{equation}
for $k\geq 2$. Without loss of generality, let us assume that $c_{S_k} < c$.
Let $n\in\N$ be minimal
such that $z_n\in(c_{S_k}, c)$. Note that $T^{S_{Q(k+1)}}((c_{S_k}, c))=(c_{S_{Q(k+1)}}, c_{S_{k+1}})\ni c$ 
for every $k\in \N_0$
and by the choice of $n$ it follows that $T^{S_{Q(k+1)}}(z_n)=c$, thus $n=Q(k+1)$ (note that the last statement 
does not hold for $n=-1,0$ which leads to a different conclusion as in (\ref{eq:zzz}) for the two cases).


\begin{definition}\label{def:longBranched}
If $J$ is a maximal interval of monotonicity of $T^k$, then $T^k(J)$ is called a \emph{branch} of $T^k$. 
It follows by induction that every branch of $T^k$ is equal to $D_n$ for some $n \leq k$.
We say that $T$ is \emph{long-branched} if
$\inf_n |D_n| > 0$ (or equivalently, the kneading map is bounded, see \cite[Proposition 6.2.6]{BB}).
\end{definition}

Note that $T$ is long-branched if $c$ is non-recurrent, but there are also long-branched maps with
recurrent critical points, see e.g.\ \cite{Br0}.

\subsection{Symbolic dynamics}

The symbolic itinerary of the critical value $c_1 \in [0,1]$ under the action of $T$ is 
called the {\em kneading sequence}, and we denote it as $\nu = \nu_1\nu_1\nu_3\dots$,
where $\nu_i = 0$ if $c_i < c$ and $\nu_i = 1$ if $c_i > c$.
Analogously, to each $x \in \UIL$, we can assign a symbolic sequence
$\overline{x} =\ovl{x}.\ovr{x} = \ldots s_{-2}s_{-1}.s_0s_1\ldots \in \{ 0, \frac 01, 1 \}^{\Z}$ where 
$$
s_{-i} = \begin{cases}
                                                 0 & \pi_i(x) < c,\\
                                                 \frac 01 & \pi_i(x) = c,\\
                                                 1 & \pi_i(x) > c,
                                                \end{cases}
\hspace{30pt} s_{i} = \begin{cases}
                                                0 & T^i(x) < c,\\
                                                \frac 01 & T^i(x) = c,\\
                                                1 & T^i(x) > c,
                                                \end{cases}
\hspace{10pt} i \geq 0.
$$         
Here $\frac 01$ means that both $0$ and $1$ are assigned to $x$. Since we assumed that $c$ has an infinite orbit, 
this can happen only once, i.e., every point has at most two symbolic itineraries.

For a fixed left-infinite sequence $\ovl s=\ldots s_{-2}s_{-1}\in\{0, 1\}^{\N}$, the subset
$$
A(\ovl s):= \overline{\{ x \in X : \ovl{s}\in\overleftarrow{x}\}} 
$$
of $X$ is called a {\em basic arc}. As mentioned in the introduction, 
$A(\overleftarrow{x})$ is the maximal closed arc $A$ containing $x$ such that
$\pi_0:A \to I$ is injective.
In \cite[Lemma 1]{Br1} it was observed that $A(\overleftarrow{x})$ is indeed an arc or degenerate (i.e., a single point).

\section{Subcontinua}\label{sec:subcon}

In this section we describe some general properties of subcontinua of $X$, taking \cite{BrBr} as a starting point.
If $H$ is a subcontinuum of $X$, then the continuity of the projections guarantee that
$\pi_{i}(H)$ are intervals for every $i\in\N_0$. 
Furthermore, if there is $k\in\N$ so that $c \notin \pi_{i}(H)$ for all $i > k$, 
then $H$ is either a point or an arc
(because then we can parametrise $H$ by $t \in \pi_k(H)$).
Furthermore, when $T$ is locally eventually onto the core (and this is true
for $T = T_s$ for all $s \in (\sqrt{2},2]$), then 
$H$ is a proper subcontinuum of $X$ if and only if $|\pi_{i}(H)| \to 0$ as $i\rightarrow \infty$.  
As a consequence (see Proposition~\ref{prop:longbranched} and Proposition 3 in \cite{BrBr}),
if $T$ is long-branched, then the only proper subcontinua of $X'$ are arcs.

Let $H\subset X$ be a proper subcontinuum and let $\{n_i\}_{i\in\N}\subset \N_0$ be 
its \emph{critical projections}; \ie $c\in\pi_{n}(H)$ if and only if $n\in \{n_i\}_{i\in\N}$. 
Since $H$ and $\sigma^{n_1}(H)$ are homeomorphic, we can assume for our purposes that $n_1 = 0$.

\begin{definition}\label{def:critical_projections}
 For $i\geq 1$ let $M_{n_i}$ denote the closure of component of $\pi_{n_i}(H)\setminus \{c\}$ such 
 that $T^{n_i-n_{i-1}}(M_{n_i})=\pi_{n_{i-1}}(H)$. 
 Denote by $L_{n_i}$ the closure of the other component of $\pi_{n_i}(H)\setminus \{c\}$. 
 If both $\pi_{n_i}(L_i) = \pi_{n_i}(M_i) = \pi_{n_{i-1}}(H)$, then denote by $M_{n_i}$ the component 
 that contains the point $T^{n_{i+1}-n_i}(c)$ as a boundary point. 
\end{definition}

\begin{proposition}[Proposition 1 in \cite{BrBr}]\label{prop:denseray}
Any subcontinuum $H\subset X$ is either a point or it contains a dense line.
\end{proposition}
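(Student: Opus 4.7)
I would split on whether the set $\{n_i\}$ of critical projections of $H$ is finite or infinite. If it is finite with maximum index $N$, then $c \notin \pi_n(H)$ for all $n > N$, so $\pi_n \upharpoonright H$ is injective (indeed monotone via $T$) and $H$ is itself an arc by the observation preceding Definition~\ref{def:critical_projections}; its relative interior is then a line dense in $H$.

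Suppose instead that the critical projections form an infinite sequence $n_1 < n_2 < \cdots$. The plan is to build a nested sequence $A_1 \subset A_2 \subset \cdots$ of sub-arcs of $H$ and set $L := \bigcup_k A_k$. For each $k$, define $A_k$ to be the set of $z \in H$ whose projection $\pi_{n_i}(z)$ lies in the branch $M_{n_i}$ of Definition~\ref{def:critical_projections} for every $i > k$. Since $T^{n_i - n_{i-1}}$ maps $M_{n_i}$ monotonically and surjectively onto $\pi_{n_{i-1}}(H)$, the projection $\pi_{n_k}$ identifies $A_k$ with the interval $\pi_{n_k}(H)$, so $A_k$ is an arc; moreover $A_k \subset A_{k+1}$ because relaxing the constraint at level $n_{k+1}$ can only enlarge the admissible set. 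To see that $L$ is a line rather than merely an arc or a ray, I would observe that passing from $A_k$ to $A_{k+1}$ adjoins new extensions of $A_k$ at the critical endpoint(s) of $\pi_{n_k}(H)$ (corresponding to taking $\pi_{n_{k+1}} \in L_{n_{k+1}}$), and iterating this on both sides produces a continuous injective image of $\mathbb{R}$ inside $H$.

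Density of $L$ in $H$ is the cleanest step and uses the product metric: given $y \in H$ and $\eps > 0$, pick $k$ with $2^{-n_k} < \eps$ and choose $z \in A_k$ with $\pi_{n_k}(z) = \pi_{n_k}(y)$, which exists by the surjectivity built into the construction of $A_k$. The inverse-limit relation $T(\pi_{n+1}) = \pi_n$ then forces $\pi_n(z) = \pi_n(y)$ for every $n \leq n_k$, so $d(y,z) \leq \sum_{n > n_k} 2^{-n} = 2^{-n_k} < \eps$. The main obstacle I foresee is justifying that each such lift $z$ genuinely lies in $H$ rather than only in the ambient space $\UIL$: this requires using that $H$ is a subcontinuum (not an arbitrary closed set compatible with the projections) and invoking the full surjectivity $T^{n_i - n_{i-1}}(M_{n_i}) = \pi_{n_{i-1}}(H)$ from Definition~\ref{def:critical_projections} to propagate points of $H$ backwards through the critical-projection tower.
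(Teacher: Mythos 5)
Your plan follows the same route as the proof this paper cites rather than reproves (Proposition 1 of \cite{BrBr}): split on whether the set of critical projections is finite, and otherwise realise the dense line as the increasing union of the arcs $A_k$ consisting of points whose deep projections stay in the branches $M_{n_i}$. The paper's remark immediately after the statement --- that for $H=X'$ one takes $M_n=[c,c_1]$ and the construction yields the arc-component $\cR$ of $\rho$ --- confirms this is exactly the cited argument, so your proposal is the intended proof in outline.

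The step you flag as the main obstacle does not fail, and the fact that closes it is worth making explicit: a subcontinuum $H\subset X$ satisfies $H=\underleftarrow{\lim}\,(\pi_n(H),T)$. Indeed, the bonding maps $T\colon\pi_{n+1}(H)\to\pi_n(H)$ are surjective, so $K:=\underleftarrow{\lim}\,(\pi_n(H),T)$ is a continuum containing $H$ with the same projections; and for any $x\in K$ and any $n$, choosing $y\in H$ with $\pi_n(y)=\pi_n(x)$ forces $\pi_i(y)=\pi_i(x)$ for all $i\le n$, whence $d(x,y)\le\sum_{i>n}2^{-i}=2^{-n}$ and $K\subset\overline{H}=H$. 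This single fact gives both the surjectivity of $\pi_{n_k}\colon A_k\to\pi_{n_k}(H)$ (so each $A_k$ really is an arc in $H$) and your density estimate. One small correction: the passage from $A_k$ to $A_{k+1}$ attaches the new piece $L_{n_{k+1}}$ only at the single end of $A_k$ lying over $c$ at level $n_{k+1}$, so a priori the union $\bigcup_k A_k$ may be a ray rather than a line; this costs nothing, since deleting the ray's endpoint leaves a dense continuous injective image of $(0,1)$, which is the required dense line.
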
	

A specific case of Proposition~\ref{prop:denseray} is when we take $H=X'$. Since in this case we can take 
for $M_n=[c,c_1]$, it follows from the proof of Proposition~\ref{prop:denseray} that
the arc-component $\cR$ of the fixed point $\rho=(\ldots,r,r)$ of $T$ is the required dense line. Recall that an {\em arc-component} of a point $x\in X$ is the union of all arcs in $X$ which contain $x$.
 
Therefore, we obtain the following corollary:

\begin{corollary}\label{cor:Rdense}
The arc-component $\cR$ of $\rho$ is a dense line in $X'$.
\end{corollary}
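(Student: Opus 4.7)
The plan is to apply Proposition~\ref{prop:denseray} to the subcontinuum $H = X'$ and track the construction in the proof, making the specific choice of branches that forces the resulting dense line to pass through $\rho$. Since $c$ has infinite orbit we know $X'$ is a nondegenerate continuum, so Proposition~\ref{prop:denseray} guarantees a dense line $L \subset X'$; the content of the corollary is to identify $L$ with $\cR$.

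For $H = X'$ we have $\pi_n(X') = [c_2, c_1]$ for every $n$, and both halves of $[c_2, c_1] \setminus \{c\}$ are mapped onto $[c_2, c_1]$ by $T$, so the rule from Definition~\ref{def:critical_projections} for choosing $M_n$ admits in particular the consistent choice $M_n = [c, c_1]$ at every critical projection. With this choice I would observe that the fixed point $r$, being in $(c, c_1)$, lies in $M_n$ for every $n$, so the backward orbit $\rho = (\ldots, r, r)$ is one of the points produced by the construction of $L$ in the proof of Proposition~\ref{prop:denseray}. Thus $\rho \in L$.

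Finally, to conclude, I would note that $L$ is a continuous injective image of $\R$ passing through $\rho$; every compact subarc of $L$ that contains $\rho$ is an arc through $\rho$, so $L \subseteq \cR$. Conversely, any arc $A \subset X$ through $\rho$ is, via its $\pi_0$-projections and the same forward orbit consistency that defines $L$, forced to lie in $L$, giving $\cR \subseteq L$. Hence $\cR = L$ is a dense line in $X'$.

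The main obstacle is the reverse inclusion $\cR \subseteq L$: one must argue that no arc through $\rho$ escapes the line built in Proposition~\ref{prop:denseray}. This is where the choice $M_n = [c, c_1]$ does the work, because any arc containing $\rho$ projects to an interval around $r \in (c, c_1)$ at each coordinate, and the inductive extension of arcs through $\rho$ performed in Proposition~\ref{prop:denseray} is precisely maximal under this constraint; in particular in the indecomposable case $s > \sqrt{2}$ the arc-component of $\rho$ is known to be a continuous injective image of $\R$, so uniqueness of the maximal parametrisation forces $\cR = L$.
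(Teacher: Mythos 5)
Your proposal is correct and takes essentially the same route as the paper, which likewise specialises Proposition~\ref{prop:denseray} to $H=X'$ with the constant choice $M_n=[c,c_1]$ and identifies the resulting dense line with the arc-component of $\rho$. One small caution: your closing appeal to the fact that $\cR$ ``is known to be a continuous injective image of $\R$'' is circular (that is part of what the corollary asserts); the clean way to get $\cR\subseteq L$ is to note that any arc $A\ni\rho$ must satisfy $\pi_j(A)\subseteq[c,c_1]$ for all large $j$ (otherwise $c\in\pi_j(A)^\circ$ infinitely often and, $T$ being locally eventually onto, all projections of $A$ would equal $[c_2,c_1]$, so $A$ could not be a proper subcontinuum), whence $A$ lies in one of the increasing arcs whose union is $L$.
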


A {\em composant} $\V_x$ of $x\in X$ is a union of all proper subcontinua of $X$ containing $x$.
An indecomposable continuum consists of uncountably many pairwise disjoint dense
composants, see \cite{Na}. If $X'$ is indecomposable, then $\cR = \V_\rho$, and this gives a 
negative answer to a question of Raines \cite[Problem 5]{In}, whether in every $X'$ such that $\omega(c)=[c_2,c_1]$, 
every composant contains homeomorphic copies of every tent inverse limit spaces.

Some further, more general properties of the $M_{n_i}$ and $L_{n_i}$ allow a description of subcontinua $H$.
Brucks \& Bruin \cite{BrBr} observed that the topologist's {\em $\sin(1/x)$-continuum}
can appear as a subcontinuum of $X'$, see Figure~\ref{fig:sin_curve}. 
This is any space homeomorphic to the graph of the function $\sin \frac1x$, $x \in (0,1]$ in $\R^2$, together with the arc 
$A = \{0\} \times [-1,1]$ that the graph compactifies on.
An {\em arc+ray continuum} (called {\em Elsa continuum} in \cite{Na-Elsa})
is a generalisation of $\sin(1/x)$-continua: it is any continuum consisting of an arc and 
a ray compactifying on it.
A {\em double $\sin(1/x)$-continuum} is any space homeomorphic to the graph of the function $\sin( \frac1{x(1-x)})$, $x \in (0,1]$ in $\R^2$, 
together with the two arcs 
$\{ 0\} \times [-1,1]$ and $\{ 1 \} \times [-1,1]$ that the graph compactifies on. 
In the same way, we can define {\em double arc+ray continua}.

\begin{figure}[!ht]
	\centering
	\begin{tikzpicture}[scale=2.5]
	\draw[thick] (0,0.34)--(2,0.34);
	\draw[thick]  (0,0.5)--(2,0.5);
	\draw[thick]  (0,0.56)--(2,0.56);
	\draw[thick]  (0,0.7)--(2,0.7);
	\draw[thick]  (0,0.78)--(2,0.78);
	\draw[thick]  (0,0.9)--(2,0.9);
	\draw[thick,domain=90:270] plot ({0.08*cos(\x)}, {0.42+0.08*sin(\x)});
	\draw[thick,domain=90:270] plot ({0.07*cos(\x)}, {0.63+0.07*sin(\x)});
	\draw[thick,domain=90:270] plot ({0.06*cos(\x)}, {0.84+0.06*sin(\x)});
	\draw[thick,domain=270:450] plot ({2+0.03*cos(\x)}, {0.53+0.03*sin(\x)});
	\draw[thick,domain=270:450] plot ({2+0.04*cos(\x)}, {0.74+0.04*sin(\x)});
	\draw[thick,domain=270:450] plot ({2+0.02*cos(\x)}, {0.92+0.02*sin(\x)});
	\thicklines
	\draw[thick] (0, 1)--(2,1);
	\node[circle,fill, inner sep=0.05] at (1,0.925){};
	\node[circle,fill, inner sep=0.05] at (1,0.95){};
	\node[circle,fill, inner sep=0.05] at (1,0.975){};
	\draw[thick] (3,0.34)--(5,0.34);
	\draw[thick]  (3,0.5)--(5,0.5);
	\draw[thick]  (3.5,0.56)--(5,0.56);
	\draw[thick]  (3.5,0.7)--(4.5,0.7);
	\draw[thick]  (3,0.78)--(4.5,0.78);
	\draw[thick]  (3,0.9)--(5,0.9);
	\draw[thick,domain=90:270] plot ({3+0.08*cos(\x)}, {0.42+0.08*sin(\x)});
	\draw[thick,domain=90:270] plot ({3.5+0.07*cos(\x)}, {0.63+0.07*sin(\x)});
	\draw[thick,domain=90:270] plot ({3+0.06*cos(\x)}, {0.84+0.06*sin(\x)});
	\draw[thick,domain=270:450] plot ({5+0.03*cos(\x)}, {0.53+0.03*sin(\x)});
	\draw[thick,domain=270:450] plot ({4.5+0.04*cos(\x)}, {0.74+0.04*sin(\x)});
	\draw[thick,domain=270:450] plot ({5+0.02*cos(\x)}, {0.92+0.02*sin(\x)});
	\thicklines
	\draw[thick] (3, 1)--(5,1);
	\node[circle,fill, inner sep=0.05] at (4,0.925){};
	\node[circle,fill, inner sep=0.05] at (4,0.95){};
	\node[circle,fill, inner sep=0.05] at (4,0.975){};
	\end{tikzpicture}
	\caption{A $\sin(1/x)$-continuum and a more general arc+ray continuum.} 
	\label{fig:sin_curve}
\end{figure}
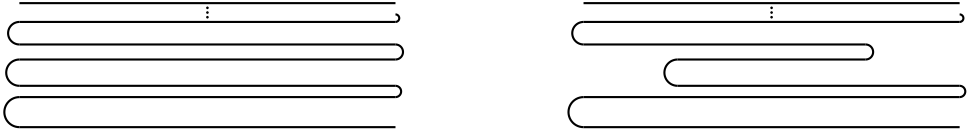

More general is the following result.

\begin{proposition}[Theorem 1 \cite{BrBr}]\label{prop:H}
Let $H$ be a subcontinuum with critical projections $\{ n_i \}$ and $\pi_{n_i}(H) = M_{n_i} \cup L_{n_i}$ as above.
Then if $c \notin T^{n_i-n_{i-1}}(L_{n_i})$ for all sufficiently large $i$, 
then $H$ is a point, an arc, a $\sin \frac1x$-continuum, or a double $\sin(1/x)$-continuum. 
\end{proposition}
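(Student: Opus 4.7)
The plan is to analyze $H$ via the recursive $M$/$L$-decomposition at each critical projection and show that, under the hypothesis, it stabilises to one of the listed forms. The argument proceeds by cases on the cardinality of $\{n_i\}$ and the geometry of the $L_{n_i}$.

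First, if $\{n_i\}$ is finite, then for all sufficiently large $n$ we have $c\notin\pi_{n}(H)$, so by the remark preceding Definition~\ref{def:critical_projections} the continuum $H$ is a point or an arc, and we are done. From now on assume $\{n_i\}$ is infinite and that $c\notin T^{n_i-n_{i-1}}(L_{n_i})$ for all $i\ge I_0$. I would also use the chainability (and eventual shift-equivalence) to arrange $n_1=0$ as noted before the definition, so $H$ starts at a critical projection.

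Next I would identify the bar(s). Set $H^M:=H\cap\bigcap_{i\ge I_0}\pi_{n_i}^{-1}(M_{n_i})$. Because each $T^{n_i-n_{i-1}}$ maps $M_{n_i}$ monotonically onto $\pi_{n_{i-1}}(H)$ (as the intermediate coordinates $n_{i-1}<m<n_i$ carry no critical point), $H^M$ is a nested intersection of compacta whose projections onto every coordinate are full intervals; connectedness and injectivity of $\pi_{n_i}|_{H^M}$ (up to one fold at $c$) force $H^M$ to be an arc. If the non-degenerate $L_{n_i}$'s lie on a single side of $c$ for all large $i$, there is one such bar; if they occur on both sides infinitely often, a symmetric analysis produces a second arc $H^{M'}$, giving the two bars of a double $\sin(1/x)$-continuum.

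Then I would build the ray(s). For each large $i$ with $L_{n_i}$ non-degenerate let $H^L_i:=H\cap\pi_{n_i}^{-1}(L_{n_i})$. The hypothesis makes $T^{n_i-n_{i-1}}|_{L_{n_i}}$ a homeomorphism into $\pi_{n_{i-1}}(H)\setminus\{c\}$, and by applying the hypothesis inductively at all higher critical projections one sees that $\pi_m|_{H^L_i}$ is eventually injective; combined with the earlier-cited characterisation of subcontinua avoiding $c$ in all but finitely many projections, $H^L_i$ is an arc meeting $H^M$ exactly at the fibre $\pi_{n_i}^{-1}(c)\cap H$. Ordering these arcs by $i$ and using the product metric, the factor $2^{-n_i}$ forces $H^L_i$ to converge in Hausdorff distance to (a subset of) $H^M$, so $\bigcup_i H^L_i$ is a ray compactifying on $H^M$. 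The specifically $\sin(1/x)$ oscillation, rather than a generic arc+ray pattern, follows from the fact that each $L_{n_i}$ sits on a definite side of $c$ and the monotone $T^{n_i-n_{i-1}}|_{L_{n_i}}$ places the successive arcs on alternating or uniformly controlled sides of the bar determined by the kneading map.

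The main obstacle is the inductive step in the ray construction: the hypothesis is stated only at level $i$, so one must check that $H^L_i$ does not acquire new fold structure via critical projections $n_j$ with $j>i$. This requires showing that the images $T^{n_j-n_i}(L_{n_j})\cap L_{n_i}$ behave compatibly and that the resulting monotone parametrisation of $H^L_i$ glues consistently; combined with the delicate verification that the ray's accumulation set is \emph{all} of $H^M$ (and not a proper sub-arc) and that, in the two-sided case, the two bars are disjoint arcs linked only through the ray, this is the crux of the proof.
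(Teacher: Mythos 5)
The paper does not prove this proposition; it is quoted verbatim as Theorem~1 of \cite{BrBr}, so your attempt can only be measured against the argument given there. Your overall strategy — decompose $H$ along the critical projections into the monotone $M$-parts and the folded-off $L$-parts, obtain an arc as an inverse limit of monotone surjections, and exhibit the rest as a ray — is the right one in spirit, and your treatment of finitely many critical projections is fine. But the structural identification at the heart of your argument is inverted, and this is fatal: the set $H^M=H\cap\bigcap_{i\ge I_0}\pi_{n_i}^{-1}(M_{n_i})$ is indeed an arc, but it is the \emph{initial arc of the ray}, not the bar. The correct picture is $H=\overline{\bigcup_i G_i}$ with $G_i:=\{x\in H:\pi_{n_j}(x)\in M_{n_j}\ \text{for all } j\ge i\}$; since $T^{n_j-n_{j-1}}(M_{n_j})=\pi_{n_{j-1}}(H)$, each $G_i$ is an arc, $\pi_{n_i}|_{G_{i+1}}$ is injective onto $M_{n_i}\cup L_{n_i}$, and $G_{i+1}\setminus G_i$ is a single half-open arc lying over $L_{n_i}\setminus\{c\}$. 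The increasing union $\bigcup_i G_i$ (whose first piece is your $H^M=G_{I_0}$) is the arc, ray or line, and the bar(s) form the remainder $\overline{\bigcup_i G_i}\setminus\bigcup_i G_i$, i.e.\ the points satisfying $\pi_{n_j}(x)\in L_{n_j}\setminus\{c\}$ for infinitely many $j$. You can check this against Proposition~\ref{prop:spiral} of the present paper: there the bar of the resulting $\sin(1/x)$-continuum has $0$-th projection $\bigcap_i D_{S_{k_i}}=\bigcap_i T^{n_i}(L_{n_i})$, an interval containing $c$ in its interior, so the bar is not contained in your $H^M$ and is built from the $L$-parts, not the $M$-parts.

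Several subsequent steps fail for the same reason. The set $H^L_i:=H\cap\pi_{n_i}^{-1}(L_{n_i})$ is in general neither an arc nor even connected: the hypothesis controls only the single step from $n_i$ to $n_{i-1}$, so $(T^{n_j-n_i})^{-1}(L_{n_i})\cap\pi_{n_j}(H)$ may be disconnected and contain $c$ in its interior for infinitely many $j>i$, and $H^L_i$ then picks up pieces of all later oscillations together with part of the bar; your appeal to eventually injective projections does not apply to it. The correct $i$-th piece of the ray is $G_{i+1}\setminus G_i$. Likewise, the one-bar/two-bar dichotomy is not governed by which side of $c$ the intervals $L_{n_i}$ lie on, but by whether $G_{i+1}\setminus G_i$ attaches to $G_i$ at the end adjacent to the previously added piece or at the opposite end, i.e.\ by whether $T^{n_{i+1}-n_i}|_{M_{n_{i+1}}}$ sends the $c$-endpoint of $M_{n_{i+1}}$ to the tip of $L_{n_i}$ or to the tip of $M_{n_i}$. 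Finally, the degenerate alternatives (point, arc) and the verification that the growing end(s) accumulate on the \emph{whole} bar — which you correctly flag as the crux — are precisely what the $G_i$-decomposition is designed to deliver and remain unproved in your write-up.
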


We call the arc in this proposition a {\em direct spiral} (a countable infinite union of 
basic arcs whose diameters uniformly decrease to $0$ together with a spiral endpoint) and the 
$\sin(1/x)$-continuum in part (1) a {\em basic $\sin(1/x)$-continuum}
(because its bar is the finite union of a basic arcs).\footnote{
From \cite[Theorem 1.2]{Br3}, it follows that there are core inverse limit spaces with non-basic $\sin(1/x)$-continua,
possibly infinitely many of them.}
Proposition~\ref{prop:spiral} below gives conditions under which such subcontinua exist.

\begin{proposition}\label{prop:spiral}	
Assume that there is an increasing subsequence $(k_i)_{i \geq 0}\subset \N$ such that 
\begin{equation}\label{eq:Qcond}
Q(k_i) = k_{i-1} \text{ and } Q(Q(k_i-1) +1) < k_{i-1}-1 \text{ for all } i \geq 1.
\end{equation}
Then $X'$ contains a subcontinuum which is 
direct spiral if $\lim_i Q(k_i+1) = \infty$ and a basic $\sin(1/x)$-continuum 
if $\liminf_i Q(k_i+1) < \infty$. 
\end{proposition}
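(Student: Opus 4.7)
The plan is to produce $H$ inside $X'$ by prescribing its critical projections and the intervals cut out at each coordinate, then to invoke Proposition~\ref{prop:H} to conclude that $H$ is an arc or a $\sin(1/x)$-continuum (possibly double), and finally to inspect the widths of the appropriate pieces in order to distinguish between direct spiral and basic $\sin(1/x)$-continuum. After a shift one may assume the first critical projection of $H$ lies at coordinate $0$. I would take the remaining critical projections to be $n_i := S_{k_i} - S_{k_0}$ for $i\ge 1$: the hypothesis $Q(k_i)=k_{i-1}$ is equivalent to $S_{k_i}-S_{k_{i-1}}=S_{k_i-1}$, so the gaps $n_i-n_{i-1}$ are exactly the cutting times $S_{k_i-1}$.

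I would next specify $\pi_{n_i}(H)$ as an interval containing $c$ built from the closest precritical points, namely (a sub-interval of) $[z_{k_{i-1}},\hat z_{k_{i-1}}]$, so that $T^{n_i-n_{i-1}}=T^{S_{k_i-1}}$ folds $\pi_{n_i}(H)$ around $c$, sending the ``main'' component $M_{n_i}$ of Definition~\ref{def:critical_projections} onto $\pi_{n_{i-1}}(H)$ while leaving the ``short'' component $L_{n_i}$ on the other side. The non-critical coordinates $n_{i-1}<n<n_i$ are filled in as suitably chosen preimages, producing an honest subcontinuum $H\subseteq X'$ with critical projections exactly $\{n_i\}$.

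The hypothesis $Q(Q(k_i-1)+1)<k_{i-1}-1$ enters precisely at the step of verifying the main condition of Proposition~\ref{prop:H}, namely $c\notin T^{n_i-n_{i-1}}(L_{n_i})$ for all sufficiently large $i$. Equation~(\ref{eq:zzz}) applied to the index $k_i-1$ places $c_{S_{k_i-1}}$ in $\Upsilon_{Q(k_i)}^{\circ}=\Upsilon_{k_{i-1}}^{\circ}$, and the given bound controls how deep inside this annulus the orbit sits, which is exactly what is needed to keep the image of $L_{n_i}$ under $T^{n_i-n_{i-1}}$ away from $c$. Proposition~\ref{prop:H} then forces $H$ to be an arc, a $\sin(1/x)$-continuum, or a double $\sin(1/x)$-continuum; since our prescribed interval at each $n_i$ is one-sided (we pick one of $c_{S_{k_i}}<c$ or $c_{S_{k_i}}>c$), the double variant is excluded.

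To distinguish the two cases I would track the width of $\pi_{n_i}(L_{n_i})$ (and, after pushing forward to level $0$, the length of $\pi_0(L_{n_i})$). Equation~(\ref{eq:zzz}) locates $c_{S_{k_i}}$ inside $\Upsilon_{Q(k_i+1)}^{\circ}$, whose width decreases monotonically in $Q(k_i+1)$ and tends to $0$ as $Q(k_i+1)\to\infty$. If $\lim_i Q(k_i+1)=\infty$ these projected pieces shrink to $\{c\}$ in the limit, so the ``bar'' of $H$ is degenerate and $H$ is a direct spiral; if $\liminf_i Q(k_i+1)<\infty$ then along a subsequence the projected pieces have lengths bounded below, producing a nondegenerate bar which is the finite union of basic arcs, \ie a basic $\sin(1/x)$-continuum. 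The delicate point is the third paragraph: reconciling the two kneading-map conditions and translating $Q(Q(k_i-1)+1)<k_{i-1}-1$ into the geometric statement that $L_{n_i}$ never folds all the way back over $c$ is the heart of the argument and the reason both hypotheses are imposed together.
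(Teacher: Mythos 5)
Your skeleton is the same as the paper's (critical projections at $S_{k_i}$, so that $n_i-n_{i-1}=S_{k_i-1}$; projections assembled from closest precritical points; Proposition~\ref{prop:H} plus \eqref{eq:zzz} to decide between spiral and $\sin(1/x)$-continuum), but there is a genuine gap exactly at the point you yourself flag as ``the heart of the argument,'' and moreover you have attached the hypothesis $Q(Q(k_i-1)+1)<k_{i-1}-1$ to the wrong step. The condition $c\notin T^{n_i-n_{i-1}}(L_{n_i})$ needed for Proposition~\ref{prop:H} does \emph{not} come from that inequality: with $L_{n_i}=[c,c_{S_{k_{i+1}-1}}]$, equation \eqref{eq:zzz} together with the \emph{first} hypothesis $Q(k_{i+1})=k_i$ places $c_{S_{k_{i+1}-1}}$ in $\Upsilon^\circ_{k_i}$, so $L_{n_i}\subset(z_{k_i-1},\hat z_{k_i-1})$ and the first return of $L_{n_i}$ to $c$ occurs at time $S_{k_i}=n_i>S_{k_i-1}=n_i-n_{i-1}$. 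Nothing more is needed there.

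What is actually missing from your argument is the verification that the prescribed intervals form a consistent inverse system at all, i.e.\ that one can inductively choose the other endpoint $a_i$ of $M_{n_i}=[c,a_i]$ with $a_i\in\Upsilon^\circ_{k_i-1}$ and $T^{S_{k_i-1}}(a_i)=a_{i-1}$, so that $T^{n_i-n_{i-1}}$ maps $M_{n_i}$ monotonically onto $\pi_{n_{i-1}}(H)=[c_{S_{k_i-1}},a_{i-1}]$. This is where the second hypothesis is used: $T^{S_{k_i-1}}(\Upsilon^\circ_{k_i-1})=(c_{S_{Q(k_i-1)}},c)$, and by \eqref{eq:zzz} the bound $Q(Q(k_i-1)+1)<k_{i-1}-1$ forces $c_{S_{Q(k_i-1)}}\notin(z_{k_{i-1}-2},\hat z_{k_{i-1}-2})$, so this image covers $(z_{k_{i-1}-2},z_{k_{i-1}-1})\ni a_{i-1}$ and the pull-back $a_i$ exists. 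Without this step the subcontinuum $H$ is not actually constructed, and completing the proof along your stated plan (spending the hypothesis on $L_{n_i}$ instead of on the reach of the monotone branch through $\Upsilon^\circ_{k_i-1}$) would leave the surjectivity of $T^{n_i-n_{i-1}}|_{M_{n_i}}$ onto $\pi_{n_{i-1}}(H)$ unjustified. (A smaller slip of the same kind: the relevant annulus for $a_i$ is $\Upsilon^\circ_{k_i-1}$, indexed by $k_i-1$, not by the previous sequence element $k_{i-1}$; these coincide only when $k_i=k_{i-1}+1$.) The final dichotomy via $T^{n_i}(L_{n_i})=D_{S_{k_{i+1}}}$ and $\lim_i Q(k_i+1)$ is fine and matches the paper.
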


\begin{proof}
We create a subcontinuum $H$ with critical projections $n_i = S_{k_i}$, so that
$n_i-n_{i-1} = S_{k_i} - S_{k_{i-1}} = S_{k_i} - S_{Q(k_i)} = S_{k_i-1}$ for all $i \geq 1$. 
The projections $H_{n_i}:=\pi_{n_i}(H)$ are constructed such that $L_{n_{i-1}} = [c,c_{S_{k_i-1}}]$ for all $i \geq 0$ and
 $M_{n_i} = [c,a_i]$ where the points $a_i$ are chosen such that $T^{n_i-n_{i-1}}$ maps
$[c,a_i]$ monotonically onto $[c_{S_{k_i-1}}, a_{i-1}]$. 

First we inductively show that we can always find such $a_i\in \Upsilon^{\circ}_{k_i-1}=(z_{k_i-2}, z_{k_i-1})\cup (\hat z_{k_i-1}, \hat z_{k_i-2})$ for every $i\in\N$. We can take \eg $k_0=3$ and $a_1\in (z_{k_0-2},z_{k_0-1})$. Assume such $a_{i-1}$ has been constructed for some $i\in\N$ and assume without the loss of generality that $a_{i-1}\in (z_{k_{i-1}-2}, z_{k_{i-1}-1})$. Note that $T^{n_i-n_{i-1}}(\Upsilon^{\circ}_{k_{i}-1})=T^{S_{k_i-1}}(\Upsilon^{\circ}_{k_{i}-1})=(c_{S_{Q(k_{i}-1)}}, c)$. 
Since $Q( Q(k_{i}-1)+1) < k_{i-1}-1$,  it follows by equation~\eqref{eq:zzz} 
that $c_{S_{Q(k_{i}-1)}}\notin (z_{k_{i-1}-2}, \hat z_{k_{i-1}-2})$, so 
$T^{S_{k_i-1}}(\Upsilon^{\circ}_{k_i-1})\supset (z_{k_{i-1}-2}, z_{k_{i-1}-1})\ni a_{i-1}$ 
and we can easily choose $a_i$. Note that if $a_{i-1}\in (\hat z_{k_{i-1}-1}, \hat z_{k_{i-1}-2})$, 
then we would have $T^{S_{k_i-1}}(\Upsilon^{\circ}_{k_i-1})\supset (\hat z_{k_{i-1}-1}, \hat z_{k_{i-1}-2})\ni a_{i-1}$.

Furthermore, by \eqref{eq:zzz},  $Q(k_{i+1})$ is the smallest $n\in\N$ such that $c_{S_{k_{i+1}-1}}\notin (z_n, \hat z_n)$. 
We assumed that $Q(k_{i+1})=k_i$, so it follows that 
$n_i = S_{k_i} = \min\{ n \geq 1 : c \in T^n(L_{n_i})\}$, and $T^{n_i}(L_{n_i}) = D_{S_{k_{i+1}}}$.

The intervals $D_{S_{k_i}}$ form a nested sequence of neighbourhoods of
$c$, converging to a point if and only if $|D_{S_{k_i}}| \to 0$, which is the case if
and only if $\lim_i Q(k_i+1) = \infty$, see \eqref{eq:zzz}.
In this case, $H$ is a direct spiral, i.e., Proposition~\ref{prop:H} applies. Otherwise, it is a 
basic $\sin(1/x)$-continuum,
with $\cap_i D_{S_{k_i}}$ equal to the $0$-th projection of the bar of $\sin(1/x)$-continuum.
\end{proof}

\begin{remark}\label{rem:Qcond2}
	If \eqref{eq:Qcond} is relaxed to:
	\begin{equation}\label{eq:Qcond2}
	Q(Q(k_i-1) +1) < k_{i-1}-1 < Q(k_i) \text{ for all sufficiently large } i,
	\end{equation}
	then we can proceed similarly as in the previous proposition. In this case we construct a subcontinuum $H$ with critical projections
	$n_1 = 0$, $n_i = \sum_{j=1}^{i-1} S_{k_j-1}$ for $i \geq 2$ (so $n_i - n_{i-1} = S_{k_i-1}$) and
	$H_{n_i} = [c_{S_{k_{i+1}-1}}, a_i]$ where $(a_i)_{i \geq 1} \subset [c_2, c_1]$ is a sequence of points such that
	$T^{S_{k_i-1}}: [c,a_i] \to [c_{S_{k_i-1}}, a_{i-1}]$ is monotone.
	That is, $L_{n_i} = [c, c_{S_{k_{i+1}-1}}]$, and because
	$Q(Q(k_{i+1}-1)+1) < k_i-1$, it is indeed possible to choose $a_i$ such that  $M_{n_i} := [c, a_i] \owns z_{k_i-1}$,
	whence $c \in T^{n_i-n_{i-1}}(M_{n_i})$.
	Because $Q(k_{i+1}) > k_i-1$, we have $c \notin T^{n_i-n_{i-1}}(L_{n_i}) \subset L_{n_{i-1}}$.
	Therefore $H$ is a direct spiral or a basic $\sin \frac1x$-continuum, depending on whether $\cap_i T^{n_i}(L_{n_i})$
	is a point or an arc.
\end{remark}

\section{Endpoints and non-end folding points}\label{sec:endpoint}

\subsection{Folding points}\label{sec:folding}
First we address folding points in general.
The following characterisation of folding points is due to Raines.

\begin{proposition}[Theorem 2.2 in \cite{Raines}]\label{prop:foldpts}
	A point $x\in X'$ is a folding point if and only if $\pi_n(x)$ belongs to the omega-limit set $\omega(c)$ 
	for every $n\in\N$.
\end{proposition}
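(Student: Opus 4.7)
My plan is to prove both directions by examining cylinder neighbourhoods $\pi_n^{-1}(U)$ of $x$, with $U \subset [c_2,c_1]$ a small open interval around $\pi_n(x)$. Such a cylinder is homeomorphic to the space of admissible backward $T$-orbits starting in $U$. The key technical fact is that $T^{-j}(U)$ is a disjoint union of $2^j$ intervals mapped homeomorphically onto $U$ by $T^j$ iff $c_k \notin U$ for all $1 \le k \le j$, because $c \in T^{-k}(U)$ occurs exactly when $c_k \in U$, and this is the only source of ``folds'' in the preimage tree.

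For the direction $(\Rightarrow)$ I argue by contrapositive. Suppose $\pi_N(x) \notin \omega(c)$ for some $N$. Since $\omega(c)$ is closed and $T$-forward invariant and $T(\pi_{n+1}(x)) = \pi_n(x)$, also $\pi_n(x) \notin \omega(c)$ for every $n \ge N$. If $\pi_n(x) = c_{k_n}$ lay on the critical orbit for every such $n$, the relation $T(c_{k_{n+1}}) = c_{k_n}$ combined with injectivity of the critical orbit (as $c$ has infinite orbit) would force $k_{n+1} = k_n - 1$, eventually violating $k_n \ge 0$; hence some $N' \ge N$ satisfies $\pi_{N'}(x) \notin \{c_k : k \ge 1\}$. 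Choose an open interval $V \ni \pi_{N'}(x)$ with $V \cap \omega(c) = \emptyset$; since $\omega(c)$ contains every accumulation point of $\{c_k\}_{k \geq 1}$, the intersection $V \cap \{c_k : k \ge 1\}$ is finite, and shrinking $V$ around $\pi_{N'}(x)$ empties it. Then the preimage tree of $V$ is fold-free, so $\pi_{N'}^{-1}(V) \cong V \times K$ for a Cantor set $K$, which is a Cantor $\times$ arc neighbourhood of $x$, so $x$ is not a folding point.

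For the direction $(\Leftarrow)$, suppose $\pi_n(x) \in \omega(c)$ for every $n \in \N_0$, and assume for contradiction that $x$ has a Cantor $\times$ arc neighbourhood; shrinking, I may assume it is of the form $\pi_n^{-1}(U)$. Because $\pi_n(x)$ is an accumulation point of $\{c_k\}_{k\geq 1}$, there exist infinitely many indices $k$ with $c_k \in U$ and $c_k \to \pi_n(x)$. Each such $c_k$ produces a fold at level $n+k$ in the preimage tree of $U$, witnessed by a basic arc endpoint $y^{(k)} \in \pi_n^{-1}(U)$ with $y^{(k)}_{-n-k} = c$. These folds accumulate at the fibre of $\pi_n^{-1}(U) \to U$ over $\pi_n(x)$, so this bundle fails to be a locally trivial Cantor bundle at $\pi_n(x)$; this is incompatible with $\pi_n^{-1}(U)$ carrying a Cantor $\times$ arc homeomorphism around $x$, giving the contradiction and forcing $x$ to be a folding point.

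The main obstacle is the last step of $(\Leftarrow)$: rigorously ruling out that \emph{some} homeomorphism (not $\pi_n$) trivialises $\pi_n^{-1}(U)$ as Cantor $\times$ arc around $x$. The cleanest route is via arc-components: in such a trivialisation the arc-component of $x$ would be an open arc with $x$ in its interior, but via a diagonal selection using $\pi_m(x) \in \omega(c)$ for arbitrarily large $m$ one exhibits basic arc endpoints converging to $x$, and a basic arc endpoint cannot be interior to any arc of $X'$ (it lies on the boundary of at least two basic arcs), contradicting that $x$ is interior to an arc.
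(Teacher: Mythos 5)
The paper does not prove this proposition at all --- it is imported verbatim as Theorem 2.2 of \cite{Raines} --- so there is no in-paper argument to compare against; I can only assess your attempt on its own terms. Your forward direction (contrapositive: push $N$ forward until $\pi_{N'}(x)$ avoids both $\omega(c)$ and the critical orbit, take $V$ with closure disjoint from both, and observe that the preimage tree of $V$ is fold-free, giving a Cantor set of arcs over $V$) is correct and is the standard argument.

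The backward direction, however, has a genuine gap, and it sits exactly where you flagged it. Your repair rests on the claim that a point $y$ lying on the boundary of two basic arcs (i.e.\ with $\pi_k(y)=c$ for some $k$) cannot be interior to any arc of $X'$. This is false: the two basic arcs meeting at such a $y$ intersect only in $y$ and concatenate into a single arc with $y$ in its interior --- locally $\pi_k$ maps this arc two-to-one onto one side of $c$, but the union is still an arc. (Indeed, when $c$ is non-recurrent with infinite orbit, Lemma~\ref{lem:non-recurrent} says $X'$ has \emph{no} endpoints whatsoever, yet points with a coordinate equal to $c$ are dense; most of them have Cantor set of arcs neighbourhoods and are interior to arcs.) More fundamentally, any strategy whose final contradiction is with ``$x$ is interior to an arc'' cannot succeed, because the proposition must also detect \emph{non-end} folding points: when the orientation-reversing fixed point $r$ lies in $\omega(c)$, the point $\rho=(\dots,r,r)$ satisfies $\pi_n(\rho)\in\omega(c)$ for all $n$ and is a folding point, yet it sits in the interior of the dense line $\cR$. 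So ``folding point'' is strictly stronger than ``not interior to an arc,'' and your argument only engages the weaker notion. The actual content of this direction --- showing that folds accumulating at every coordinate level rule out \emph{every} homeomorphism of a neighbourhood of $x$ onto (Cantor set)$\times$(open arc), not merely the trivialisation furnished by $\pi_n$ --- is precisely what Raines's proof supplies and what is missing here. (A smaller, repairable issue: one cannot ``shrink'' a Cantor$\times$arc neighbourhood to assume it is a cylinder $\pi_n^{-1}(U)$; one should instead work with the small product-form open sets inside the given chart.)
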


Note that $\F=\underleftarrow{\lim}\{\omega(c), T|_{\omega(c)}\}$. 
Since $\omega(c)$ is compact, the set $\F \cap X'$ is also compact and non-empty.
It also follows that $\F = X'$ if $\omega(c) = [c_2, c_1]$ and $\F$ is nowhere
dense if $\omega(c)$ is nowhere dense. So excluding renormalisable maps,
$\F \cap X'$ is either nowhere dense or equal to $X'$.
Furthermore, if $c$ is recurrent, then $\omega(c) = \overline{\orb(c)}$ is either finite or
perfect and therefore uncountable. In the latter case $\F$ is also uncountable.

\begin{lemma}[Proposition 2 in \cite{Br1}]\label{lem:non-recurrent}
	If $\orb(c)$ is infinite and $c$ is non-recurrent, then there exist infinitely many 
	folding points and no endpoints.
\end{lemma}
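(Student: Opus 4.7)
The plan is to combine Proposition~\ref{prop:foldpts}, which identifies $\F \cap X'$ with $\underleftarrow{\lim}(\omega(c), T|_{\omega(c)})$, with the observation that non-recurrence forces $c \notin \omega(c)$. Since $\omega(c)$ is closed, this yields $\delta := \dist(c, \omega(c)) > 0$, and in particular $\pi_n(x) \neq c$ for every $n \in \N$ and every folding point $x$.

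For the first half of the statement, I would first argue that $\omega(c)$ is infinite. If it were finite it would be a single periodic orbit $P$ disjoint from $\orb(c)$, and the convergence $\orb(c) \to P$ would force $c$ into the topological stable set of $P$; for tent maps (where all periodic orbits are repelling) and for the logistic maps relevant here (without attracting cycles), this forces $c \in P$, contradicting $c \notin \omega(c)$. Granted $\omega(c)$ infinite, the surjection $\pi_0 \colon \F \to \omega(c)$ makes $\F$ infinite.

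For the absence of endpoints, let $x \in X'$. If $x \notin \F$, then some $\pi_n(x) \notin \omega(c)$ and a standard Williams-type argument produces a Cantor-set-of-arcs neighborhood of $x$, placing $x$ in the interior of a non-degenerate arc. If $x \in \F$, consider the basic arc $A(\overleftarrow{x})$, parameterized by $t = \pi_0(y)$ via the inverse branches of $T$ prescribed by $\overleftarrow{x}$. The interval $\pi_0(A(\overleftarrow{x}))$ is the maximal set of $t$ on which this backward iteration remains defined with the prescribed signs, and its boundary points are exactly those $t$ at which some backward iterate coincides with $c$ (the limit of a sign change at that step). Since $\pi_n(x) \in \omega(c)$ lies at distance at least $\delta$ from $c$ for every $n$, no backward iterate of $x_0$ equals $c$, so $x_0$ lies in the interior of $\pi_0(A(\overleftarrow{x}))$. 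Non-degeneracy of $A(\overleftarrow{x})$ itself is automatic: $c$ non-recurrent implies $T$ long-branched (as noted after Definition~\ref{def:longBranched}), so $\pi_0(A(\overleftarrow{x}))$ has length bounded below by $\inf_n |D_n| > 0$. The two sub-arcs of $A(\overleftarrow{x})$ meeting at $x$ then witness that $x$ is not an endpoint of $X$.

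The main obstacle I anticipate is the ``$\omega(c)$ is infinite'' step, which genuinely uses that our families have no attracting cycles: without such a hypothesis a non-recurrent $c$ with infinite orbit could approach a sink and yield only finitely many folding points. By contrast, the ``no endpoints'' half is essentially a formal consequence of the dictionary between the boundary of a basic arc and the existence of a backward iterate equal to $c$, combined with $\dist(c,\omega(c))>0$.
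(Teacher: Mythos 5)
The paper offers no proof of this lemma; it is quoted from \cite{Br1}, so your argument has to stand on its own. Your first half is essentially sound: non-recurrence gives $\delta=\dist(c,\omega(c))>0$, the expansion of $T_s$ rules out a finite $\omega(c)$ disjoint from $\orb(c)$ (your parenthetical ``forces $c\in P$'' should really be ``forces $\orb(c)$ to meet $P$, making $\orb(c)$ finite'', but the contradiction is reached either way), and surjectivity of $T|_{\omega(c)}$ makes $\pi_0\colon\F\to\omega(c)$ onto, so $\F$ is infinite. You are also right that this half silently uses the absence of attracting cycles, i.e.\ that the lemma is really a statement about the tent family.

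The ``no endpoints'' half, however, rests on a false characterisation. You assert that the boundary points of $\pi_0(A(\overleftarrow{x}))$ are \emph{exactly} the parameters $t$ at which some backward iterate along the prescribed branches equals $c$, and deduce interiority of $x_0$ from $\dist(\pi_n(x),c)\ge\delta$. But $\pi_0(A(\overleftarrow{x}))=\bigcap_n T^n(I_n)$ is a nested intersection, and a boundary point of the intersection need not be a boundary point of any finite stage $T^n(I_n)$: by Lemma~\ref{lem:first} it is $\sup\{c_n: n\in\NL\}$ (resp.\ $\inf\{c_n:n\in\NR\}$), which when $\NL$ is infinite is typically a strict accumulation point of $\orb(c)$, hence a point none of whose backward iterates along $\overleftarrow{x}$ hits $c$. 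Indeed, in the recurrent case a spiral endpoint has $\pi_0(A(\overleftarrow{x}))=\{x_0\}$ with $x_0\in\omega(c)$ and generically no backward iterate equal to $c$; your criterion would place such an $x_0$ in the ``interior'' of a singleton. So the mere fact that no backward iterate of $x_0$ equals $c$ proves nothing; what you actually need is the \emph{uniform} bound: since $\partial T^n(I_n)$ consists of points $T^{n-j}(c)$ with $T^j(\partial I_n)\owns c$, and $T^{n-j}$ is monotone with slope $\pm s^{n-j}$ on $T^j(I_n)$, one gets $\dist\bigl(x_0,\partial T^n(I_n)\bigr)\ge s^{n-j}\,\dist(x_{-(n-j)},c)\ge\delta$ for all $n$ (with a similar two-step argument when the cylinder boundary lies on $\partial[c_2,c_1]$). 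With that estimate your conclusion does follow, but as written the step is a non sequitur. The standard route, and the one consistent with how this paper uses \cite{Br1} (compare the proof of Proposition~\ref{prop:uncountablyEndpts}), is symbolic: by Proposition~\ref{prop:endpt} an endpoint forces $\tau_L(\overleftarrow{x})=\infty$ or $\tau_R(\overleftarrow{x})=\infty$; taking $n<m$ in $\NL(\overleftarrow{x})$ yields $\nu_{m-n+1}\dots\nu_{m-1}=\nu_1\dots\nu_{n-1}$, so $c_{m-n+1}$ lies in the $(n-1)$-cylinder of $c_1$; letting $n\to\infty$ and using expansivity gives $c_1\in\overline{\{c_k\}_{k\ge2}}$, hence $c_1\in\omega(c)$ and then $c\in\omega(c)$, contradicting non-recurrence. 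Either repair makes the proof complete.
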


It is possible that $X$ has countably infinitely many non-end folding points.
An example is the map with \emph{kneading sequence} (i.e., symbolic itinerary of the critical value $c_1$)
$$
\nu = 1.0.0.11.0.11.11.0.11.11.11.0.11.11.11.11.0.11.11.11.11.11 \dots
$$
Indeed, if the critical point is non-recurrent, by Proposition~\ref{prop:foldpts}
the folding points have two-sided itinerary $\dots 1111 \dots$ or
$\dots 111101111 \dots$, and  $\F \cap X'$ has only isolated points, except for the fixed point $\rho$. Furthermore, 
$\F$ is countably infinite.
In general,
it was shown in \cite[Theorem~6.2]{GoKnRa} that 
if $\omega(c)$ is homeomorphic to $n\in\N$ disjoint copies of $S_0=\{0\}\cup\{1/k: k\in\N\}$
(for which $c$ must be non-recurrent), 
then the set of folding points is homeomorphic to $n$ copies of $S_0$ too and is thus countable. 
The example given with $\nu$ above corresponds to the $n=1$ case. Note that the accumulation point in the 
example is fixed under the shift $\sigma$. Analogously, the $n$ accumulation points in the general case 
will be periodic under $\sigma$.

Clearly the number of folding points is uncountable if $\omega(c)$ is uncountable, but 
also when $\omega(c)$ is countable, it can happen that
the set of folding points is uncountable.
This is shown in \cite{GoKnRa}, together with more interesting results on the number of folding points in $X$. 
However, the following natural problem that arises is to our knowledge still open.

\begin{problem} 
	Give necessary and sufficient conditions on $c$ so that the corresponding
	inverse limit space $X$ has countably infinitely many folding points. 
\end{problem}

Note that $X$ contains endpoints if and only if $T$ is recurrent. Moreover, if $X$ contains infinitely many endpoints, then it contains uncountably many of them, see Proposition~\ref{prop:uncountablyEndpts}. Thus $X$ which contain only countably infinitely many folding points must be generated by $T$ with non-recurrent critical orbit, and then every folding point will be a non-end folding point. So the problem above can be asked in more general form:

\begin{problem}
	Give necessary and sufficient conditions on $c$ so that the corresponding
	inverse limit space $X$ has countably infinitely many {\it non-end} folding points. 
\end{problem}

\subsection{Endpoints}\label{sec:end}
Now we focus on endpoints of $X$.
We give a symbolic classification of endpoints, based on \cite[Section 2]{Br1}.
For every basic arc $A(\overleftarrow{x})$, where $\ovl x=\ldots s_{-2}s_{-1}\in\{0,1\}^{\N}$, we define 
\begin{eqnarray*}
	\NL(\overleftarrow{x})
	&:=& \{n>1 : s_{-(n-1)}\ldots s_{-1}=\nu_1\nu_2\ldots \nu_{n-1}, \#_1(\nu_1\ldots \nu_{n-1})\textrm{ odd}\}, \\
	\NR(\overleftarrow{x})
	&:=&\{n\geq 1 : s_{-(n-1)}\ldots s_{-1}=\nu_1\nu_2\ldots \nu_{n-1}, \#_1(\nu_1\ldots \nu_{n-1})\textrm{ even}\}.
\end{eqnarray*}
and
$$
\tau_L(\overleftarrow{x}) := \sup\NL(\overleftarrow{x}) \quad \text{ and } 
\quad \tau_R(\overleftarrow{x}) :=\sup\NR(\overleftarrow{x}).
$$
The quantities $\tau_L$ and $\tau_R$  first appeared in \cite{Br1} in order to study the number of 
endpoints of unimodal inverse limit spaces $X$. 
In the definition of $\NR(\overleftarrow{x})$ we allow $n=1$, so since $\#_1(\emptyset)$ is even, it follows immediately that $N_R(\ovl x)$ is a non-empty set. On the other hand, supremum is well-defined  for $\tau_L$ as well. Namely, if $s_{-1}=1$, then $s_{-1}=\nu_1$ and $\#_1(\nu_1)$ is odd. In case $s_{-1}=0$ we find the smallest $n>2$ so that $s_{-(n-1)}=1$, which indeed exists since $\ovl x\neq \ovl 0$. Thus $N_L(\ovl x)$ is non-empty also. 

Now we restate some lemmas from \cite{Br1} in our setting.

\begin{lemma}\label{lem:first}(\cite{Br1}, Lemma 2 and Lemma 3)
	If $A(\overleftarrow{x}) \in \{ 0,1\}^{\N}$ is such that both
	$\tau_L(\overleftarrow{x}),\\
	 \tau_R(\overleftarrow{x})
	<\infty$, then 
	$$
	\pi_0(A(\overleftarrow{x}))=[T^{\tau_L(\overleftarrow{x})}(c), T^{\tau_R(\overleftarrow{x})}(c)] = D_n,
	$$
for $n = \mathrm{max}\{ \tau_L(\overleftarrow{x}), \tau_R(\overleftarrow{x}) \}$.
Without the restriction that $\tau_L(\overleftarrow{x}), \tau_R(\overleftarrow{x})<\infty$, we have
\begin{eqnarray*}
\sup\pi_0(A(\overleftarrow{x}))=\inf\{c_n:  n \in \NR(\overleftarrow{x}) \},\\
\inf\pi_0(A(\overleftarrow{x}))=\sup\{c_n:  n  \in \NL(\overleftarrow{x}) \}.
\end{eqnarray*}
\end{lemma}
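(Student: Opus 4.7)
The plan is to analyze $\pi_0(A(\overleftarrow{x}))$ via a monotonicity-with-parity argument applied to the backward orbit of each $y\in A(\overleftarrow{x})$. The key observation is that whenever $s_{-(n-1)}\cdots s_{-1}=\nu_1\cdots\nu_{n-1}$, the forward itineraries of $y_{-(n-1)}$ and $c_1$ agree for $n-1$ iterates, so by induction on $i\le n-2$ the interval they span maps under $T^i$ into one side of $c$; consequently $T^{n-1}$ is monotone on $[\min(y_{-(n-1)},c_1),\max(y_{-(n-1)},c_1)]$. Its orientation is determined by the parity of $\#_1(\nu_1\cdots\nu_{n-1})$: orientation-preserving when even (since $T$ preserves orientation on $[0,c]$ and reverses it on $[c,1]$), orientation-reversing when odd.

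Using $y_{-(n-1)}\le c_1$, which holds automatically since $c_1$ is the maximum of the core (and the extremes of $\pi_0(A(\overleftarrow{x}))$ are attained on the core component $X'\cap A(\overleftarrow{x})$), one obtains one-sided bounds on $y_0$. For $n\in\NR(\overleftarrow{x})$ the parity is even, so $T^{n-1}$ is orientation-preserving and $y_0\le c_n$; for $n\in\NL(\overleftarrow{x})$ the parity is odd, so $T^{n-1}$ is orientation-reversing and $y_0\ge c_n$. Taking the relevant infimum and supremum yields
\begin{equation*}
\sup\pi_0(A(\overleftarrow{x}))\le \inf\{c_n:n\in\NR(\overleftarrow{x})\},\qquad \inf\pi_0(A(\overleftarrow{x}))\ge \sup\{c_n:n\in\NL(\overleftarrow{x})\}.
\end{equation*}

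For the reverse inequalities I would exhibit extremal points in $A(\overleftarrow{x})$. A candidate realising $\sup\pi_0(A(\overleftarrow{x}))$ is obtained by forcing $y_{-(n-1)}=c_1$ for the extremal $n$: set $y_{-k}=c_{n-k}$ for $0\le k\le n-1$ (the resulting symbols match $s_{-k}$ by the defining condition on $\NR$), put $y_{-n}=c$ so that its ambiguous symbol can be chosen to equal $s_{-n}$, and extend backward by selecting the appropriate preimage at each subsequent position to match the remaining symbols of $\overleftarrow{s}$. This yields a point of $A(\overleftarrow{x})$ with the desired $0$-th coordinate, and a symmetric construction handles the lower bound. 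Since $A(\overleftarrow{x})$ is compact and $\pi_0$ continuous, $\pi_0(A(\overleftarrow{x}))$ is a closed interval, and these constructions (passing to a convergent subsequence when $\tau_L$ or $\tau_R$ is infinite) realise the extremal values.

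For the finite case $\tau_L,\tau_R<\infty$, the extrema are attained at $n=\tau_L$ and $n=\tau_R$ themselves: as $n$ grows within $\NR(\overleftarrow{x})$ (resp.\ $\NL(\overleftarrow{x})$), the image $B_{n-1}:=T^{n-1}(K_{n-1})$, where $K_{n-1}$ is the branch of $T^{n-1}$ matching the symbols, shrinks and its upper (resp.\ lower) endpoint is replaced by a tighter $c_n$. Hence $\pi_0(A(\overleftarrow{x}))=[c_{\tau_L},c_{\tau_R}]$, and the identification with $D_{\max\{\tau_L,\tau_R\}}$ follows by matching this interval with the image of the branch of $T^{\max\{\tau_L,\tau_R\}-1}$ containing $c_1$ that realises $s_{-1},\dots,s_{-(\max\{\tau_L,\tau_R\}-1)}$, unwinding the recursive definition of $D_n$. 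The main obstacle I expect is the attainability construction, specifically verifying that the extremal point's backward extension stays consistent with $\overleftarrow{s}$ (in particular the bookkeeping around the double symbol at position $-n$), together with the precise identification that each new endpoint at step $n-1$ is $c_n$ rather than some $c_j$ with $j<n$, which requires locating the closest precritical points $\hat z_k,z_k$ bounding the branch $K_{n-1}$.
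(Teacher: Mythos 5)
The paper itself does not prove this lemma --- it is imported verbatim from \cite{Br1} (Lemmas 2 and 3) --- so I am comparing your proposal against the standard argument there. Your first half is correct and is exactly the right idea: the itinerary match forces $T^{n-1}$ to be monotone on the interval spanned by $y_{-(n-1)}$ and $c_1$, the parity of $\#_1(\nu_1\dots\nu_{n-1})$ determines the orientation, and since every coordinate of a point of $X$ lies in $T([0,1])=[0,c_1]$ you get $y_0\le c_n$ for $n\in N_R(\overleftarrow{x})$ and $y_0\ge c_n$ for $n\in N_L(\overleftarrow{x})$. (The parenthetical about extremes being attained on the core component is unnecessary; $y_{-(n-1)}\le c_1$ holds for all points of $X$.) This gives the inequalities $\sup\pi_0(A)\le\inf\{c_n\}$ and $\inf\pi_0(A)\ge\sup\{c_n\}$ cleanly.

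The attainability direction, however, has a genuine gap, and it is the half of the lemma that carries the content. Your explicit extremal point --- $y_{-k}=c_{n-k}$ for $k\le n-1$, $y_{-n}=c$, ``extend backward by selecting the appropriate preimage'' --- begs the question: the existence of a backward orbit of $c$ realising the remaining symbols $\dots s_{-(n+2)}s_{-(n+1)}$ inside $[0,c_1]$ is precisely equivalent to the claim that $c_n$ survives in every deeper projection, i.e.\ to the statement being proved; the prescribed preimage on a given side need not exist in $[0,c_1]$ (a point below $c_2$ has no preimage in $[0,c_1]$ on the $1$-side), and for a non-extremal $n\in N_R$ such an extension genuinely fails to exist. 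The correct route is the one you gesture at in your last paragraph: write $\pi_0(A(\overleftarrow{x}))=\bigcap_m T^m(C_m)$ for the nested cylinder images $C_m=\{z\in[0,c_1]: z \text{ realises } s_{-m}\dots s_{-1}\}$ and prove \emph{by induction on $m$} that an endpoint of $T^m(C_m)$ is created only when $s_{-m}\dots s_{-1}=\nu_1\dots\nu_m$ (i.e.\ $m+1\in N_L\cup N_R$, equivalently $c_1\in \overline{C_m}$), that the new endpoint is then exactly $c_{m+1}$, and that it is the upper or lower endpoint according to the parity; this yields $\sup T^m(C_m)=\min\{c_j: j\le m+1,\ j\in N_R\}$ and the analogous formula below, after which the nested-closed-interval intersection gives both displayed identities with no separate attainability construction. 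You state this inductive claim but do not prove it, and you yourself flag it as ``the main obstacle''; as it stands the reverse inequalities are not established. The same applies to the identification with $D_n$, which additionally needs the combinatorial fact that $\min\{\tau_L,\tau_R\}=\beta(\max\{\tau_L,\tau_R\})$ rather than just ``unwinding the recursive definition''.
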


This gives the following symbolic characterisation of endpoints. 

\begin{proposition}\label{prop:endpt}\cite[Proposition 2]{Br1}
A point $x\in X$ such that \footnote{Note that in the following proposition we assume that $\pi_i(x)\neq c$ for every $i>0$. 
If there exists $i>0$ such that $\pi_i(x)=c$, then we apply the proposition to $\sigma^i(x)$ 
and use the fact that shift $\sigma$ preserves endpoints.}
 $\pi_i(x)\neq c$ for every $i<0$ is an endpoint of $X$ if and only if 
$\tau_L(\overleftarrow{x})=\infty$ and $\pi_0(x)=\inf\pi_0(A({\ovl{x}}))$ or 
$\tau_R(\overleftarrow{x})=\infty$ and $\pi_0(x)=\sup\pi_0(A({\ovl{x}}))$.
\end{proposition}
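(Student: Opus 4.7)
The plan is to prove both implications by using Lemma~\ref{lem:first} as a dictionary between the symbolic quantities $\tau_L(\overleftarrow{x}), \tau_R(\overleftarrow{x})$ and the extreme values of $\pi_0(A(\overleftarrow{x}))$, and then reading off endpoint-ness from how basic arcs are glued together in $X$. The hypothesis $\pi_i(x) \neq c$ for all $i < 0$ pins $\overleftarrow{x} \in \{0,1\}^{\N}$ unambiguously and places $x$ in a single basic arc $A(\overleftarrow{x})$.

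For necessity, assume $x$ is an endpoint of $X$. I first observe that $x$ must be a topological endpoint of $A(\overleftarrow{x})$: if $\pi_0(x)$ were strictly interior to $\pi_0(A(\overleftarrow{x}))$, then two subarcs of $A(\overleftarrow{x})$ stretching on opposite sides of $x$ would form incomparable subcontinua containing $x$, contradicting the endpoint property. Hence $\pi_0(x) \in \{\inf,\sup\}\pi_0(A(\overleftarrow{x}))$; by symmetry, assume $\pi_0(x) = \inf \pi_0(A(\overleftarrow{x}))$. Suppose toward contradiction $\tau_L(\overleftarrow{x}) = n < \infty$. Then by Lemma~\ref{lem:first} $\pi_0(x) = c_n$, and by maximality of $n$ in $\NL$ the match $s_{-(n-1)}\ldots s_{-1} = \nu_1\ldots\nu_{n-1}$ breaks at position $-n$. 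This combinatorial break lets me exhibit a second basic arc $A(\overleftarrow{y})$ that meets $A(\overleftarrow{x})$ precisely at $x$: flipping the itinerary at the coordinate of the broken match produces a legitimate backward sequence realised in $X$, and the two basic arcs glue at $x$ because they correspond to the two preimages under $T$ of the same critical-value configuration at that level. Small subarcs from the two basic arcs then give incomparable subcontinua containing $x$, a contradiction.

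For sufficiency, suppose without loss of generality $\tau_L(\overleftarrow{x}) = \infty$ and $\pi_0(x) = \inf \pi_0(A(\overleftarrow{x}))$; I must show any two subcontinua $K_1, K_2 \ni x$ are comparable. Using Lemma~\ref{lem:first}, $\pi_0(x) = \sup\{c_n : n \in \NL(\overleftarrow{x})\}$ with $\NL(\overleftarrow{x})$ unbounded, so $\pi_0(x)$ is accumulated from the $\tau_L$-side by the orbit of $c$. Any subcontinuum through $x$ that strayed past $\pi_0(x)$ on the forbidden side would, at some coordinate $k$, have $\pi_k(K_i)$ containing $c$, and thereby violate the unbounded agreement of $\overleftarrow{x}$ with $\nu$ encoded by $\tau_L = \infty$. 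Hence each $\pi_k(K_i)$ is a one-sided extension of $\{\pi_k(x)\}$, so the pair is nested at every coordinate, and this coordinatewise nesting lifts to $K_1 \subseteq K_2$ or $K_2 \subseteq K_1$ by a direct inverse-limit argument.

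The main obstacle is this last step of the sufficiency direction: making rigorous how $\tau_L = \infty$ obstructs any subcontinuum through $x$ from extending in the wrong direction at any coordinate, while correctly tracking the parity conditions $\#_1(\nu_1\ldots\nu_{n-1})$ odd for $\NL$ and even for $\NR$ that govern the orientation of each iterate of $T$ and thus determine which end of each $\pi_k(A(\overleftarrow{x}))$ is being approached. Managing this orientation bookkeeping across levels is the most delicate part; once it is under control, both the twin-arc construction in necessity and the one-sided extension argument in sufficiency fall into place.
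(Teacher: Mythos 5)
The paper does not prove this statement at all --- it is quoted verbatim as Proposition 2 of \cite{Br1} --- so your attempt can only be measured against the argument in that reference, and against correctness. There is a genuine gap in each direction, and the one in the sufficiency direction defeats the strategy itself.

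In the necessity direction, the twin-arc construction cannot be carried out: two distinct basic arcs can only intersect in a point that has two symbolic itineraries, i.e.\ a point with some coordinate equal to $c$, which the standing hypothesis $\pi_i(x)\neq c$ rules out. So there is no "second basic arc $A(\overleftarrow{y})$ that meets $A(\overleftarrow{x})$ precisely at $x$". What actually disposes of the case $\pi_0(x)=\inf\pi_0(A(\overleftarrow{x}))$ with $\tau_L(\overleftarrow{x})=n<\infty$ is simpler and different: Lemma~\ref{lem:first} gives $\pi_0(x)=c_n$, and the matching $s_{-(n-1)}\dots s_{-1}=\nu_1\dots\nu_{n-1}$ together with injectivity of $T$ on each branch forces $\pi_j(x)=c_{n-j}$ for $j\le n-1$ and hence $\pi_n(x)=c$ (since $c$ is the unique $T$-preimage of $c_1$), contradicting the hypothesis outright. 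The case is vacuous, but your argument for it asserts a configuration that cannot occur.

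The sufficiency direction is the real problem. The claim that every subcontinuum $K\ni x$ projects one-sidedly at every coordinate, i.e.\ that $\pi_k(x)$ is always an endpoint of $\pi_k(K)$, is false. A direct spiral with endpoint $x$ (Propositions~\ref{prop:H} and~\ref{prop:spiral}) is an arc containing $x$ whose critical projections $\pi_{n_i}(H)=L_{n_i}\cup M_{n_i}$ contain $c$, hence contain the corresponding coordinate of $x$ in their interior; likewise, for a flat endpoint whose arc-component is a long ray, a sufficiently long initial arc traverses $A(\overleftarrow{x})$, continues into the basic arc glued at the far end, and folds back, so that $\pi_0(K)$ contains $\pi_0(x)$ in its interior. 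Such $K$ are perfectly legitimate subcontinua through the endpoint $x$. Consequently the proposed mechanism --- that $\pi_k(K)\owns c$ would "violate" $\tau_L(\overleftarrow{x})=\infty$ --- is not an inference: $\tau_L$ is a property of the itinerary of $x$, not of $K$, and nothing is violated. With coordinatewise one-sidedness gone, the (otherwise correct) observation that coordinatewise nesting lifts to $K_1\subseteq K_2$ has nothing to act on. A correct proof must compare two given subcontinua directly, e.g.\ at the first projection level where they differ, exploiting that for $n\in \NL(\overleftarrow{x})$ the map $T^n$ is monotone on the interval between $\pi_n(x)$ and $c$ and sends it onto $[c_n,\pi_0(x)]$, or use the $\eps$-chain (end-link) characterisation of endpoints of chainable continua, as is done in \cite{Br1}.
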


The following proposition follows implicitly from the proof of Corollary 2 in \cite{Br1}. It shows that if $c$ is recurrent, then $\#(\E \cap X') = n\in\N$ if and only if $c$ is $n$-periodic, and otherwise
$\#(\E \cap X') = 2^{\aleph_0}$. 
We prove here an extension of that statement.
	
\begin{proposition}\label{prop:uncountablyEndpts}
If $\orb(c)$ is infinite and $c$ is recurrent, then the core inverse limit space $X'$ has 
uncountably many endpoints.
Moreover, $\E$ has no isolated points and is dense in $\F$.
\end{proposition}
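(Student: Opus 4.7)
The plan is to use the symbolic characterisation of endpoints from Proposition~\ref{prop:endpt} together with the combinatorial consequences of recurrence of $c$. The uncountability of $\E\cap X'$ is already implicit in \cite[Corollary~2]{Br1}, so I would cite it and focus on the density of $\E$ in $\F$; the absence of isolated points in $\E$ will then follow, because $\E\subseteq\F$ and the construction yields many distinct approximating endpoints.

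For density in $\F$, fix $f\in\F$ and $\eps>0$. By Proposition~\ref{prop:foldpts}, $f_{-i}\in\omega(c)$ for all $i$. Choose $M$ large (depending on $\eps$) and, after replacing $f$ by $\sigma^k(f)$ if necessary (shift preserves $\E$ and $\F$), assume $f_{-i}\ne c$ for $1\le i\le M$, so the symbols $\alpha_i$ of $f_{-i}$ are unambiguous. Recurrence of $c$ supplies two things: first, $\omega(c)=\overline{\orb(c)}$, so there exist $n_k\to\infty$ with $c_{n_k}\to f_{-M}$ and hence $\nu_{n_k}\nu_{n_k+1}\cdots\nu_{n_k+M-1}=\alpha_M\alpha_{M-1}\cdots\alpha_1$ for all large $k$; second, every prefix $\nu_1\cdots\nu_N$ of the kneading sequence reappears in $\nu$ at arbitrarily late positions. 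Inductively pick a sparse subsequence $k_1<k_2<\cdots$ and set $s_{-i}:=\nu_{n_{k_j}+M-i}$ for $1\le i\le n_{k_j}+M-1$; each $k_j$ is taken so large that the prefix $\nu_1\cdots\nu_{n_{k_{j-1}}+M-1}$ reappears at position $n_{k_j}-n_{k_{j-1}}+1$, which makes the successive definitions consistent. Then $s_{-i}=\alpha_i$ for $1\le i\le M$, so $\ovl e$ agrees with $\ovl f$ on the last $M$ coordinates; and $n_{k_j}+M\in N_R(\ovl e)\cup N_L(\ovl e)$ for every $j$, so pigeonhole on the parity of $\#_1(\nu_1\cdots\nu_{n_{k_j}+M-1})$ gives either $\tau_R(\ovl e)=\infty$ or $\tau_L(\ovl e)=\infty$.

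Assuming WLOG the former, by Lemma~\ref{lem:first} the value $e_0:=\sup\pi_0(A(\ovl e))=\inf\{c_n:n\in N_R(\ovl e)\}\le\inf_j c_{n_{k_j}+M}$ lies close to $f_0=\lim_j c_{n_{k_j}+M}$ once $k_1$ is chosen large. The unique $e\in A(\ovl e)$ with $\pi_0(e)=e_0$ is then an endpoint by Proposition~\ref{prop:endpt}. Since each branch of $T$ is expanding, backward iteration is Lipschitz-contractive, so $|e_{-i}-f_{-i}|$ remains small for $1\le i\le M$, while the tail contributes at most $2^{-M}$ to $d(e,f)$; choosing $M$ and $k_1$ large enough yields $d(e,f)<\eps$. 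For the no-isolated-points statement, I would apply the same construction with $f$ itself an endpoint $e^*$: the freedom in choosing the subsequence $\{n_{k_j}\}_{j\ge 2}$ produces uncountably many admissible $\ovl e$'s that differ from $\ovl{e^*}$ at positions $-M-1,-M-2,\ldots$ and hence give distinct endpoints $e\ne e^*$ arbitrarily close to $e^*$. The main technical obstacle is the combinatorial bookkeeping needed to interleave the two consequences of recurrence — $c_{n_{k_j}}\to f_{-M}$ and the prefix-reappearance of $\nu$ at the prescribed shifts — into one coherent backward symbolic sequence $\ovl e$.
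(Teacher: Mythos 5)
Your overall strategy is the paper's own: build the approximating endpoint symbolically as a left\--infinite limit of prefixes of $\nu$ matched to the itinerary of the given folding point, use the pigeonhole on $\NL/\NR$ to get $\tau_L=\infty$ or $\tau_R=\infty$, and invoke Proposition~\ref{prop:endpt}; the uncountability via ``two inequivalent continuations at each stage'' is likewise the argument the paper writes out rather than citing \cite{Br1}. The one place where your argument as written does not go through is the claim that $e_0:=\sup\pi_0(A(\ovl e))$ ``lies close to $f_0$''. You only establish the one-sided bound $e_0=\inf\{c_n:n\in\NR(\ovl e)\}\le\inf_j c_{n_{k_j}+M}\approx f_0$; nothing you say rules out $e_0$ lying far \emph{below} $f_0$, since $\NR(\ovl e)$ contains many indices besides the $n_{k_j}+M$ (every $n$ for which $\nu_1\dots\nu_{n-1}$ occurs as a suffix of $\nu_1\dots\nu_{n_{k_j}+M-1}$ with even $\#_1$), and a priori some such $c_n$ could be much smaller than $f_0$. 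Since your whole metric estimate (backward contraction along matching branches) is anchored at $|e_0-f_0|$, this is the load-bearing step and it is not justified.

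The gap is repairable with the same cylinder/orientation reasoning that underlies Lemma~\ref{lem:first}: for every $n\in\NR(\ovl e)$ one in fact has $c_n\ge f_0$. Indeed, for $n-1\le M$ the identity $s_{-(n-1)}\dots s_{-1}=\alpha_{n-1}\dots\alpha_1=\nu_1\dots\nu_{n-1}$ says that $f_{-(n-1)}$ lies in the $(n-1)$-cylinder of $c_1$, on which $T^{n-1}$ is monotone increasing (even $\#_1$) with right endpoint $c_1$, so $f_0=T^{n-1}(f_{-(n-1)})\le c_n$; for $n-1>M$ the same argument applied to $c_{n_{k_j}+M-n+1}$ gives $c_{n_{k_j}+M}\le c_n$ for all large $j$, hence $f_0\le c_n$ in the limit. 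Together with your upper bound this even forces $e_0=f_0$. Note that the paper sidesteps the issue entirely by matching a \emph{two-sided} central block of the folding point's itinerary (coordinates $-k$ through $k$), so that the $0$-th projection of the constructed endpoint automatically lies in the same $k$-cylinder as $\pi_0(f)$; you match only the backward half, which is exactly why the extra argument for $e_0$ is needed. By contrast, the ``combinatorial bookkeeping'' you flag as the main obstacle is not actually one: once $n_{k_1}$ is fixed with $c_{n_{k_1}}$ close to $f_{-M}$, the prefix-reappearance condition alone forces $\nu_{n_{k_j}}\dots\nu_{n_{k_j}+M-1}=\nu_{n_{k_{j-1}}}\dots\nu_{n_{k_{j-1}}+M-1}$, so the $M$-block (and hence the closeness of $c_{n_{k_j}}$ to $f_{-M}$ up to a $k$-cylinder) propagates automatically through the induction.
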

	
\begin{proof}
Since $c$ is recurrent, for every $k\in\N$ there exist infinitely many  $n\in\N$ such that 
$\nu_1\ldots \nu_{n}=\nu_1\ldots \nu_{n-k}\nu_1\ldots \nu_k$. 

Take a sequence $(n_j)_{j\in\N}$ such 
that $\nu_1\ldots \nu_{n_{j+1}}=\nu_1\ldots \nu_{n_{j+1}-n_j}\nu_1\ldots \nu_{n_j}$ for every $j\in\N$. 
Then the basic arc given by the itinerary 
$$
\overleftarrow{x}:=\lim_{j\to\infty} \nu_1\ldots \nu_{n_j},
$$
is admissible and $\tau_L(\overleftarrow{x})=\infty$ or $\tau_R(\overleftarrow{x})=\infty$.
Therefore, $A(\ovl x)$ contains an endpoint. Note that, since $\nu$ is not periodic, $\ovl x$ 
is also not periodic and thus $\sigma^k(\ovl x)\neq\ovl x$ for every $k\in\N$. 

To determine the cardinality of endpoints, we claim that 
for every fixed $n\in\N$ there are $m_2>m_1>n$ such that
$$
\nu_1\ldots\nu_{m_2}=\nu_1\ldots\nu_{m_2-n}\nu_{1}\ldots\nu_{n}, \quad 
\nu_{1}\ldots\nu_{m_1}=\nu_1\ldots\nu_{m_1-n}\nu_1\ldots\nu_n, 
$$
but  $\nu_1\ldots\nu_{m_1}$ is not a suffix of $\nu_1 \dots \nu_{m_2}$. 
Indeed, if $m_2$ does not exist, then 
$\ovl x=(\nu_1\ldots\nu_{m_1-n})^{-\infty}\nu_1\ldots\nu_n$ would have an eventually periodic tail, which is a contradiction with $\sigma^k(\ovl x)\neq\ovl x$ for every $k\in\N$.

We conclude that for every $n_j$ there are at least two choices of $n_{j+1}$ such that the corresponding 
tails $\ovl x$ are different, and have $\# (N_L(\ovl x) \cup N_R(\ovl x)) = \infty$. 
It follows that there are uncountably many basic arcs containing at least one endpoint of $X$.

To show that $\E$ contains no isolated points and is in fact dense in $\F$, 
take any folding point $x$ with two-sided itinerary $\ldots s_{-2}s_{-1}.s_0s_1s_2\ldots$ 
Then, for every $k\in\N$, there exists $n\in\N$ such that $s_{-k}\ldots s_{k}=\nu_n\ldots\nu_{n+2k}$. 
Using the arguments as in the previous paragraphs of the proof, we can find a basic arc with itinerary 
$\ovl y=\ldots \nu_1\ldots\nu_{n-1}\nu_n\ldots\nu_{n+2k}$ and such that 
$\tau_L(\ovl y)=\infty$ or $\tau_R(\ovl y)=\infty$. So $\sigma^{-k}(\ovl y)$ contains an endpoint 
with itinerary $\ldots \nu_n\ldots\nu_{n+k}.\nu_{n+k+1}\ldots\nu_{n+2k}\ldots$ 
Since $k\in\N$ was arbitrary, 
we conclude that there are (in fact, uncountably many) endpoints arbitrarily close to the point $x$. 
\end{proof}

For the more detailed properties of endpoints, we make a distinction between
flat $\E_F$, spiral $\E_S$ and nasty endpoints $\E_N$. It is clear from
the definitions (see the introduction) that $\sigma^k(\E_F)=\E_F$, $\sigma^k(\E_N)=\E_N$, and $\sigma^k(\E_S)=\E_S$ 
for every $k\in\Z$. Therefore:

\begin{corollary}\label{cor:uncountablyEndpts}
The orbit $\{\sigma^n(x): n\in\N_0\}$ of every $x\in\E$ is dense in $\F$.
In particular, $\E_S, \E_N, \E_F$ are not closed in $\F$ unless they are empty or equal to $\F$. 
\end{corollary}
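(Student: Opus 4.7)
The plan is to reduce the orbit-density statement to density of a forward $T$-orbit in $\omega(c)$, then extract the latter from the symbolic description of endpoints. Given $x\in\E$ and $y\in\F$, fix $\epsilon>0$ and choose $K$ with $\sum_{i>K}2^{-i}<\epsilon/2$; by uniform continuity of $T^0,T^1,\ldots,T^K$ on $[0,1]$ there is $\delta>0$ with $|a-b|<\delta$ implying $|T^\ell(a)-T^\ell(b)|<\epsilon/4$ for $0\le\ell\le K$. For $n\ge K$ one has $\pi_i(\sigma^n(x))=T^{n-i}(x_0)$ and $\pi_i(y)=T^{K-i}(y_{-K})$ for $i=0,\ldots,K$, so $d(\sigma^n(x),y)<\epsilon$ whenever $|T^{n-K}(x_0)-y_{-K}|<\delta$. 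Since $y_{-K}\in\omega(c)$ by Proposition~\ref{prop:foldpts}, the claim reduces to showing that the forward $T$-orbit $\{T^k(x_0):k\ge0\}$ is dense in $\omega(c)$, where $x_0:=\pi_0(x)$.

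To obtain this density I would invoke Proposition~\ref{prop:endpt}: without loss of generality $\tau_R(\ovl{x})=\infty$ (the case $\tau_L(\ovl x)=\infty$ being symmetric), and Lemma~\ref{lem:first} then gives $x_0=\inf\{c_n:n\in N_R(\ovl{x})\}$. Choose $n_j\in N_R(\ovl x)$ with $n_j\to\infty$ and $c_{n_j}\to x_0$; continuity of $T^k$ yields $T^k(x_0)=\lim_j c_{n_j+k}$ for every $k\ge0$, so the forward orbit of $x_0$ consists of distinguished limits of tail segments of the critical orbit. Recurrence of $c$ gives $\overline{\{c_n:n\ge0\}}=\omega(c)$, and the fact that arbitrarily long prefixes $\nu_1\ldots\nu_m$ of the kneading sequence appear in the left tail of $\ovl x$ encodes the return structure of $c$ to its own neighbourhoods. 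A diagonal selection --- matching each target $z=\lim_\ell c_{m_\ell}\in\omega(c)$ with exponents of the form $k_\ell:=m_\ell-n_{j(\ell)}$ for suitably chosen $j(\ell)$ --- should then yield $T^{k_\ell}(x_0)\to z$.

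The main obstacle is making this diagonal step precise, i.e.\ showing that $T^{k_\ell}(x_0)\to z$ as $\ell\to\infty$ despite the double-limit structure. In the infinitely renormalisable case, $T|_{\omega(c)}$ is minimal and density of every forward orbit in $\omega(c)$ is automatic. For the non-renormalisable transitive case, where $\omega(c)$ is a finite union of intervals, one must control the nesting of cylinders $[z_{k-1},z_k)\cup(\hat z_k,\hat z_{k-1}]$ from Section~\ref{sec:prel} via Hofbauer-tower estimates; this is precisely where the symbolic framework that supports Proposition~\ref{prop:endpt} is designed to operate.

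The ``In particular'' statement follows immediately from the main one: each of $\E_F,\E_S,\E_N$ is $\sigma$-invariant by definition, so if $\E_*$ is non-empty then picking $x\in\E_*$ gives $\{\sigma^n(x):n\ge0\}\subseteq\E_*$, whence $\F=\overline{\{\sigma^n(x):n\ge0\}}\subseteq\overline{\E_*}$ by the main statement. If moreover $\E_*$ is closed in $\F$ then $\E_*=\F$, so any non-empty proper $\E_*$ must fail to be closed.
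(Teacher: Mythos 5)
Your reduction of the statement to density of the forward $T$-orbit of $x_0=\pi_0(x)$ in $\omega(c)$ is a correct (indeed forced) reformulation of the forward-orbit claim, and both your handling of the product metric and the ``In particular'' part are fine. But the crucial step --- that $\{T^k(x_0):k\ge 0\}$ is dense in $\omega(c)$ --- is exactly where you stop, and the diagonal selection you sketch does not go through as stated. The problem is one of quantifier order: to get $T^{m_\ell-n_{j(\ell)}}(x_0)$ close to $c_{m_\ell}$ from $x_0\approx c_{n_{j(\ell)}}$ you must pay the Lipschitz constant $s^{m_\ell-n_{j(\ell)}}$ of the iterate, and since the exponent must be nonnegative (so $n_{j(\ell)}\le m_\ell$) there is no a priori relation between $|x_0-c_{n_j}|$ and $s^{-(m_\ell-n_j)}$ that would force the error to tend to $0$. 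Nothing in Lemma~\ref{lem:first} or Proposition~\ref{prop:endpt} supplies such a relation, and $\omega(c)$ need not be minimal, so density of $\orb_T(x_0)$ in $\omega(c)$ is not automatic either. You correctly flag this as ``the main obstacle,'' but it is the entire content of the corollary, so the proof is incomplete.

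The paper's argument avoids the issue by reversing the order of quantifiers and working with deep coordinates of $x$ rather than forward iterates of $x_0$. Since $x\in\E$, there is a sequence $n_j\to\infty$ in $\NL(\ovl x)\cup\NR(\ovl x)$, and for such $n_j$ the point $\pi_{n_j-1}(x)$ lies in the same $(n_j-1)$-cylinder as $c_1$, whence $\pi_{n_j}(x)\to c$. Given $y\in\F$ and a coordinate depth $N$, one first fixes $m$ with $c_m$ close to $\pi_N(y)$ (density of $\orb(c)$ in $\omega(c)$), and only then chooses $j$ so large that $T^m(\pi_{n_j}(x))=\pi_{n_j-m}(x)$ is close to $c_m$; here only the uniform continuity of the single, already-fixed map $T^m$ is needed, so no competition between a shrinking distance and a growing Lipschitz constant arises. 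This produces shifts $\sigma^k(x)$ with $k=N-n_j+m$, which is negative for large $j$: the paper's proof in fact establishes density of the two-sided orbit $\{\sigma^k(x):k\in\Z\}$ (as its last line states), which is all that is needed for the non-closedness of $\E_F,\E_S,\E_N$, since these sets are invariant under both $\sigma$ and $\sigma^{-1}$. If you insist on the one-sided statement as literally printed, you are attempting something genuinely harder than what the paper's argument delivers, and your write-up does not close that gap.
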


\begin{proof}
 Since $\orb(c)$ is a dense subset of $\omega(c)$, for every $y \in \F$, $n \in \N_0$
 and neighbourhood $U \owns \pi_n(y)$, 
 there is $m \in \N_0$ such that $c_m \in U$.
 As in the proof of Proposition~\ref{prop:uncountablyEndpts}, since every $x \in \E$ has a sequence $(n_j)_{j \geq 1}$ such that $\pi_{n_j}(x) \to c$,
 we can find $j \geq 1$ such that $\pi_n(\sigma^k(x)) \in U$ for $k = m-n_j$.
 But this implies that $\{\sigma^k(x) : k \in \Z\}$ is dense in $\F$.
\end{proof}

\begin{remark}
Note that orbits of non-end folding points are not necessarily dense in $\F$. 
For example, assume $\omega(c)=[c_2, c_1]$. Then every point is a folding point, 
and in particular the fixed point $\rho\in\F\setminus\E$. 
\end{remark}

Now that we have more information about endpoints we briefly look back at non-end folding points in order to prove an analogue of Proposition~\ref{prop:uncountablyEndpts} and thus give an insight into the topology of non-end folding points. 
The following proposition holds in particular when $\omega(c)$ is not minimal. Recall that a dynamical system $(Y, f)$ (or just a set $Y$) is called {\em minimal} if it does not contain a non-empty, proper, closed, $f$-invariant subset, or equivalently, if every orbit is dense in $Y$.

\begin{proposition}\label{prop:denseNonendFoldingpts}
	If $\F \setminus \E \neq \emptyset$ in $X$, then $\F\setminus\E$ is dense in $\F$. 
\end{proposition}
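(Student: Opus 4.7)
The plan is to mimic the density argument for endpoints in Proposition~\ref{prop:uncountablyEndpts} and Corollary~\ref{cor:uncountablyEndpts}, but producing a non-end folding point approximating an arbitrary $y\in\F$. First, by Lemma~\ref{lem:non-recurrent}, if $c$ is non-recurrent then $\E=\emptyset$, so $\F\setminus\E=\F$ is trivially dense in $\F$. Hence I may assume $c$ is recurrent, and fix $x\in\F\setminus\E$ provided by the hypothesis.

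Let $y\in\F$ and $\epsilon>0$. For $k$ sufficiently large, any two points in $X$ sharing the same two-sided symbolic itinerary on $[-k,k]$ are within $\epsilon$ in the product metric. Since $\pi_i(y)\in\omega(c)=\overline{\orb(c)}$ for every $i\geq 0$, the window $s^y_{-k}\ldots s^y_k$ appears in the kneading sequence: there is $n_0\in\N$ with $s^y_{-k+j}=\nu_{n_0+j}$ for $0\leq j\leq 2k$. I will construct $z\in\F\setminus\E$ with the same window, which forces $d(z,y)<\epsilon$.

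To build $z$, set $\pi_i(z)=c_{n_0+k-i}$ for $0\leq i\leq k$; the associated symbols then match $y$'s window on the backward positions, and the forward orbit $c_{n_0+k},c_{n_0+k+1},\ldots$ matches $y$'s window on the forward positions. For $i>k$, extend the backward orbit inside $\omega(c)$ by choosing inverse branches of $T$ that shadow a suitable shift of $x$'s backward orbit: by density of $\orb(c)$ in $\omega(c)$ the point $c_{n_0}$ is approximated by some $x_{-j}$, so consistent preimage choices in $\omega(c)$ can be made so that the tail of $\ovl z$ symbolically coincides with (a shift of) $\ovl x$. Thus $z\in\F$. Now, since $x\in\F\setminus\E$, Proposition~\ref{prop:endpt} tells us that either $\tau_L(\ovl x),\tau_R(\ovl x)<\infty$, or one of these is infinite but $\pi_0(x)$ fails to be the sup or inf of $\pi_0(A(\ovl x))$. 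Because the deep tail of $\ovl z$ follows $\ovl x$, the matching of $\ovl z$'s tail with $\nu$'s prefix must also terminate (or, if not, the position of $\pi_0(z)=c_{n_0+k}$ in $\pi_0(A(\ovl z))$ inherits from $x$'s non-extremal position), so the endpoint conditions of Proposition~\ref{prop:endpt} fail for $z$ and $z\in\F\setminus\E$.

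The hard part will be the shadowing step: realizing, for $i>k$, a backward orbit of $z$ inside $\omega(c)$ whose symbols reproduce those of $\ovl x$ starting from $\pi_k(z)=c_{n_0}$. This is a symbolic admissibility argument that combines forward invariance of $\omega(c)$, continuity of the inverse branches of $T$ on $\omega(c)$, and density of $\orb(c)$ in $\omega(c)$, in the same spirit as the constructions in the proof of Proposition~\ref{prop:uncountablyEndpts}. Once executed, density of $\F\setminus\E$ in $\F$ follows.
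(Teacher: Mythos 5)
The proposal has a genuine gap at exactly the point you flag as ``the hard part'': the splicing/shadowing step is not an argument but a wish, and it is in fact the whole difficulty. You want to start from $\pi_k(z)=c_{n_0}$ and choose preimages inside $\omega(c)$ so that the deep tail of $\ovl z$ symbolically coincides with a shift of $\ovl x$. But backward orbits of a given point within $\omega(c)$ are heavily constrained by the structure of $(\omega(c),T|_{\omega(c)})$; density of $\orb(c)$ in $\omega(c)$ and continuity of inverse branches give you no licence to concatenate an arbitrary window $\nu_{n_0}\dots\nu_{n_0+2k}$ of the kneading sequence with an arbitrary prescribed tail and have the result be realizable in $\underleftarrow\lim(\omega(c),T|_{\omega(c)})$. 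Approximating $c_{n_0}$ by some $x_{-j}$ only controls finitely many preimage choices, whereas to conclude $z\notin\E$ via Proposition~\ref{prop:endpt} you must control the \emph{entire} infinite backward itinerary (the quantities $\tau_L,\tau_R$ are suprema over all suffix matches with $\nu$). The fallback clause about $z$ ``inheriting'' $x$'s non-extremal position when one of the $\tau$'s is infinite is likewise unsubstantiated: extremality of $\pi_0$ in $\pi_0(A(\ovl z))$ is not a property that transports along an approximate tail.

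The paper's proof takes a different and workable route that you should compare with. It exploits that $\F\setminus\E$ is $\sigma$-invariant and, for each $\eps>0$, produces a \emph{single} point of $\F\setminus\E$ whose $\sigma$-orbit is $\eps$-dense in $\F$. If $\omega(c)$ is minimal, the given non-end folding point $x$ itself works: its zeroth coordinate returns $\delta$-close to $c$, and a folding point $z$ through $c$ has $\eps/2$-dense finite orbit segment, so a suitable orbit segment of $x$ is $\eps$-dense. If $\omega(c)$ is not minimal, the paper builds a non-end folding point from scratch: take a proper closed invariant $\Omega\subset\omega(c)$ at distance $3\eta$ from $c$, choose times $k_j$ where the critical orbit shadows $\Omega$ for $j$ steps (hence stays $\eta$-far from $c$), then take the \emph{minimal} $m_j>k_j$ with $c_{m_j}$ $\delta$-close to $c$, and pass to an accumulation point of $\sigma^{m_j}(z)$. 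Minimality of $m_j$ forces every backward coordinate of the limit to be $\delta$-far from $c$, which immediately gives $\tau_L,\tau_R<\infty$ and hence a non-end folding point with $x_0$ near $c$. This sidesteps any symbolic splicing: closeness to $c$ of the zeroth coordinate, rather than matching a prescribed window of $y$, is what yields density. To salvage your approach you would need an actual specification-type theorem for $(\omega(c),T|_{\omega(c)})$, which is not available here.
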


\begin{proof}
	Recall that $\F = \underleftarrow\lim(\omega(c),T|_{\omega(c)})$. 
	We can assume that $c$ is recurrent, because otherwise $\F = \F \setminus \E$, and there is nothing to prove.
	Assume that $\F \setminus \E \neq \emptyset$; recall that $\F \setminus \E$ is $\sigma$-invariant.
	Let $\eps > 0$ be arbitrary. We  claim that there is $x \in \F \setminus \E$
	such that $\orb_\sigma(x)$ is $\eps$-dense in $\F$. Since $\eps$ is arbitrary, this will prove the proposition. 
	
	Fix $z = (\dots, z_{-2}, z_{-1}, z_0) \in \F$ such that $z_0 = c$; by recurrence of $c$ such a folding point always exists.
	To prove the claim, find $N = N(\eps) \in \N$ such that $\{ \sigma^n(z)\}_{n=0}^N$ is $\eps/2$-dense in $\F$. 
	There is $\delta>0$ such that
	$\diam(T^j(B_\delta(c))) < \eps/2$ for every
	$0 \leq j \leq N$, so that
	$\{ \sigma^n(x)\}_{n=0}^N$ is $\eps$-dense in $\F$ for every $x = (\ldots,x_{-2},x_{-1},x_0) \in \F$ with $d(x_0, c) \leq \delta$.
	
	If $\omega(c)$ is minimal, then for every $x \in \F \setminus \E$, there is $k \geq 0$ such that 
	$d(T^k(x_0),c) \leq \delta$, so $\{ \sigma^n(x)\}_{n=k}^{N+k}$ is $\eps$-dense in $\F$.
	
	Therefore it remains to verify the non-minimal case, i.e., assume there exists a $T$-invariant closed 
	set $\Omega \subset \omega(c)$ such that $\eta := d(\Omega,c)/3 > 0$.
	If $x\in\underleftarrow{\lim}\{\Omega, T\}$, then $x\in\F \setminus \E$, since $d(x_{-j},c) > \eta$ for all $j \geq 0$.
	Now since $\Omega \subset \omega(c)$, for every $j \in \N$ we can find $k_j \in\N$ such that
	$d(c_{k_j+i}, \Omega) < \eta$ for all $0 \leq i \leq j$. Note that the choice of $\eta$ implies that $d(c_{k_j+i}, c)>\eta$ for all $0 \leq i \leq j$. Since $c$ is recurrent, we can also find
	a minimal $m_j > k_j$ such that $d(c,c_{m_j}) < \delta$.
	Take
	$$
	x^j := \sigma^{m_j}(z) = ( \dots z_{-2}, z_{-1}, c, \dots, c_{k_j}, c_{k_j+1}, \dots, c_{m_j}) \in \F,
	$$
	(recall that $z\in \mathcal{F}$) and let $x$ be any accumulation point of the sequence $(x^j)_{j \geq 1}$.
	Then  $x \in \F$ (because $\F$ is closed), and $d(c,x_{-i}) \geq \delta$ 
	for every $i \geq 1$, due to the minimality of $m_j$. This means that $\tau_L(\ovl x), \tau_R(\ovl x) < \infty$, so $x \notin \E$.
	However, $d(x_0,c) \leq \delta$, so $\{ \sigma^n(x) \}_{n=0}^N$ is $\eps$-dense in $\F$.
	This concludes the proof.
\end{proof}

\subsubsection{Flat endpoints}\label{sec:flat}
Recall that a flat endpoint is an endpoint in $X$ that is an endpoint of its own {\bf non-degenerate} basic arc.
The set of flat endpoints is denoted by $\E_F$.
By Proposition~\ref{prop:endpt},
$\tau_L(\overleftarrow{x})=\infty$ and $\pi_0(x)=\inf\pi_0(A({\ovl{x}}))$ or 
$\tau_R(\overleftarrow{x})=\infty$ and $\pi_0(x)=\sup\pi_0(A({\ovl{x}}))$. Recall that 
$\sup\pi_0(A(\overleftarrow{x}))=\inf\{c_n:  n \in \NR(\ovl{x}) \} >
\inf\pi_0(A(\overleftarrow{x}))=\sup\{c_n:  n  \in \NL(\ovl{x}) \}$.
The following statement is an extension of Proposition 3 from \cite{BrBr}.

\begin{proposition}\label{prop:longbranched}
If the map $T$ is long-branched, then the only proper subcontinua of $X'$ are arcs.
Additionally, $\E \neq \emptyset$ if and only if $c$ is recurrent, and in this case
all endpoints are flat, i.e., $\E = \E_F$.
\end{proposition}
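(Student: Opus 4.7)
Plan: The first claim is Proposition~3 of \cite{BrBr}. Its proof relies on the equivalence between long-branchedness and the kneading map $Q$ being bounded (by \cite[Proposition~6.2.6]{BB}): the infinite strictly increasing sequence $k_1 < k_2 < \cdots$ with $Q(k_{i+1}) = k_i$ needed by the constructions in Proposition~\ref{prop:spiral} and Remark~\ref{rem:Qcond2} cannot exist when $Q$ is bounded, so combining with Proposition~\ref{prop:H} forces every proper subcontinuum of $X'$ to be an arc.

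The equivalence $\E \neq \emptyset \iff c$ is recurrent is immediate from Lemma~\ref{lem:non-recurrent} (non-recurrent $c$ with infinite orbit yields no endpoints) and Proposition~\ref{prop:uncountablyEndpts} (recurrent $c$ with infinite orbit yields uncountably many endpoints).

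To prove $\E = \E_F$ under the recurrent hypothesis, suppose toward contradiction that some $x \in \E$ has degenerate basic arc $A(\ovl{x}) = \{x\}$; then $x$ is either a spiral or a nasty endpoint. In the spiral case, $x$ is the endpoint of a non-degenerate arc $A \subset X'$ that decomposes as an infinite chain of non-degenerate basic arcs $B_1, B_2, \dots$ accumulating at $x$ with $\diam(B_i) \to 0$. The endpoints of each $B_i$ are shared with adjacent basic arcs in the chain and hence are not endpoints of $X$, so $\tau_L(B_i), \tau_R(B_i) < \infty$; by Lemma~\ref{lem:first}, $\pi_0(B_i) = D_{n_i}$ for some $n_i$, and long-branchedness gives $|D_{n_i}| \geq \delta := \inf_n |D_n| > 0$, so $\diam(B_i) \geq |\pi_0(B_i)| \geq \delta$, contradicting $\diam(B_i) \to 0$.

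In the nasty case, $x$ lies on no non-degenerate arc of $X$. By Corollary~\ref{cor:Rdense}, the dense line $\cR \subset X'$ accumulates at $x$: choose $y_j \in \cR$ with $y_j \to x$. Fix any $\delta_0 \in (0, \diam(X'))$ and, using the parametrization of $\cR$, take sub-arcs $A_j \subset \cR$ containing $y_j$ with $\diam(A_j) = \delta_0$. By compactness of the hyperspace of subcontinua we may assume $A_j \to L$ in the Hausdorff metric; then $L$ is a subcontinuum of $X'$ with $x \in L$ and $\diam(L) = \delta_0 < \diam(X')$, so $L$ is a proper subcontinuum of $X'$, hence an arc by the first part. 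This produces a non-degenerate arc through $x$, contradicting the nastiness of $x$. The main obstacle is precisely this nasty case: one must combine density of $\cR$, Hausdorff compactness of the hyperspace of subcontinua, and the first part of the proposition to produce the desired non-degenerate arc through $x$.
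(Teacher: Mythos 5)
Your justification of the first claim does not hold up as stated: Proposition~\ref{prop:spiral} and Remark~\ref{rem:Qcond2} give \emph{sufficient} conditions for producing particular non-arc subcontinua, so showing that those constructions cannot be run when $Q$ is bounded does not rule out non-arc subcontinua arising some other way; and Proposition~\ref{prop:H} only classifies $H$ under the hypothesis $c \notin T^{n_i-n_{i-1}}(L_{n_i})$ for large $i$, which you have not verified for an arbitrary subcontinuum. The paper's argument is direct and avoids this: if $H\subset X'$ is not an arc, it has infinitely many critical projections $\{n_i\}$, and each $\pi_{n_i}(H)$ then contains an interval of the form $[c,c_{N(i)}]$ (a branch image), whence $|\pi_{n_i}(H)|\geq \inf_n|D_n|=\delta>0$; this contradicts $|\pi_i(H)|\to 0$, which is necessary for $H$ to be proper. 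Since you also cite \cite{BrBr} for this claim, the statement itself is safe, but the sketch you attach to it is not a proof.

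For $\E=\E_F$ the paper is much shorter: under long-branchedness every basic arc projects under $\pi_0$ onto a nested intersection of branches $D_n$, each of length at least $\delta$, so \emph{no} basic arc is degenerate; since spiral and nasty endpoints by definition lie in degenerate basic arcs, both classes are empty at once. Your spiral case is essentially this argument in a more roundabout form (and relies on the unproved assertion that the arc ending at $x$ decomposes into \emph{full} basic arcs $B_i$ with $\diam(B_i)\to 0$ --- true, but it needs the same kind of justification). Your nasty case, by contrast, is a genuinely different and correct argument: taking subarcs $A_j\subset\cR$ of fixed diameter $\delta_0<\diam(X')$ through points $y_j\to x$ and passing to a Hausdorff limit $L$ yields a non-degenerate proper subcontinuum through $x$, which by the first part is an arc, contradicting nastiness. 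This buys a slightly more general principle (if all proper subcontinua of $X'$ are arcs, then $\E_N\cap X'=\emptyset$) at the cost of invoking hyperspace compactness, whereas the paper's one-line observation about basic arcs disposes of both non-flat types simultaneously.
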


\begin{proof}
	Assume by contradiction that a subcontinuum $H\subset X'$ with critical 
	projections $\{n_i\}_{i\in\N}\subset \N$ is not an arc. By our observations in Section~\ref{sec:subcon},
	the set of critical projections $\{n_i\}_{i\in\N}$ is infinite, and
	there exists $N(i)\in\N$ so that $[c,c_{N(i)}]\subset\pi_{n_i}(H)$. 
	Since $T$ is long-branched, there exists $\delta>0$ so that $|c_{N(i)}-c|>\delta$ for every $i\in\N$. 
	However, this contradicts that $H$ is a proper subcontinuum of $X'$.
	
	The proof of Proposition~\ref{prop:uncountablyEndpts} gives $\E\neq\emptyset$ if and only if $c$ is recurrent. Since there are no arbitrarily short basic arcs, it follows that $\E=\E_F$.
\end{proof}

Thus, if $c$ is recurrent, and $T$ is long-branched, then all endpoints are flat, and Proposition~\ref{prop:uncountablyEndpts} guarantees there are 
uncountably many of them. It is possible that there are additional non-end folding points, cf.\ Theorem~\ref{thm:FisE}.
For instance, if the orientation reversing fixed point $r$ belongs to $\omega(c)$, then
$\rho := (\dots, r,r,r)$ is a non-end folding point.

\begin{problem} Give necessary and sufficient conditions on $T$ such that $\E = \E_F$.\end{problem}

 Since the class of self-homeomorphisms of unimodal inverse limit spaces is so rigid
(all homeomorphisms are isotopic to powers of $\sigma$), no self-homeomorphism $h:X \to X$
can map a flat endpoint to a spiral endpoint (or nasty endpoint). However, we can ask the following.

\begin{problem}
	 Is it possible that a flat endpoint has a neighbourhood that is 
	homeomorphic to a neighbourhood of a spiral endpoint? 
In other words, given unimodal inverse limit $X$, a flat endpoint $x\in X$, and a spiral endpoint $y\in X$, is there a way to
 distinguish $x$ from $y$ locally? Similarly, given two different unimodal inverse limits $X$ and $Y$, a flat endpoint $x\in X$, and a spiral endpoint $y\in Y$, is there a way to distinguish $x$ from $y$ locally?

\end{problem}

\subsubsection{Spiral endpoints}\label{sec:spiral}

Let $\E_S$ denote the \emph{set of spiral endpoints}, i.e., the endpoints that have a {\bf degenerate} basic arc 
but are contained in a non-degenerate arc-component of $X$.
The notion of persistent recurrence was first introduced in \cite{Ly} in connection with the existence 
of wild attractors of unimodal interval maps. It turns out to be the crucial notion for 
the classification of core inverse limits $X'$ for which $\F=\E$. 

\begin{definition}\label{def:pullback}
Let $x=(\ldots, x_{-1}, x_0)\in X$ and let $J\subset I$ be an interval. 
The sequence $(J_n)_{n\in\N_0}$ of intervals is called a \emph{pull-back} of $J$ along 
$x$ if $J=J_0$, $x_{-k}\in J_k$ and $J_{k+1}$ is the largest interval such that $T(J_{k+1})\subset J_k$ 
for all $k\in\N_0$. A pull-back is {\em monotone} if $c\not\in J_n^\circ$ 
for every $n\in\N$. 
\end{definition}

Lyubich \cite{Ly} gave the following definition in the case when $c$ is recurrent:

\begin{definition}\label{def:persrec}
The critical point $c$ is {\em reluctantly recurrent} if there is $\eps>0$ and an arbitrary long (but finite!) 
backward orbit $\bar{y}=(y_{-l}, \ldots, y_{-1}, y)$ in $\omega(c)$ such that the $\eps$-neighbourhood 
of $y\in I$ has monotone pull-back along $\bar{y}$. Otherwise,  $c$ is {\em persistently recurrent}.
\end{definition}

The following lemma shows that one can replace arbitrarily long pull-backs by infinitely long
pull-backs, and this allows us to interpret reluctant recurrence as:
there exists a folding point $x=(\ldots, x_{-1}, x_0)\in X$, an interval $J\subset I$ such that $x_0\in J^\circ$, 
and a monotone pull-back of $J$ along $x$.

\begin{lemma}\label{lem:infinite}
	Let $y\in \omega(c)$, $y\in U^\circ$ where interval $U\subset I$ and assume that for every $i\in\N$ the set $U$ can be monotonically 
	pulled-back along $(c_1, \ldots c_{n_i+1})$, where $U\ni c_{n_i+1}\neq y$. Then $U$ can be monotonically  
	pulled-back along some infinite backward orbit $(\ldots, y_{-2}, y_{-1}, y)$, where $y_{-i}\in\omega(c)$ 
	for every $i\in\N$.
\end{lemma}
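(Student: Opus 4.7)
The plan is a Cantor diagonal argument applied to the Hausdorff-compact space of closed subintervals of $I$.

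First I unpack the hypothesis: for each $i$, fix the sequence of closed intervals $J^{(i)}_0=U, J^{(i)}_1,\ldots,J^{(i)}_{n_i}$ realising the monotone pull-back of $U$ along $(c_1,\ldots,c_{n_i+1})$, so that $c_{n_i+1-k}\in J^{(i)}_k$, $T(J^{(i)}_{k+1})\subset J^{(i)}_k$ and $c\notin (J^{(i)}_k)^\circ$ for $1\le k\le n_i$. Without loss of generality $n_i\to\infty$.

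Next, I invoke Hausdorff-compactness of the space of closed subintervals of $[0,1]$ and diagonalise: refine $(n_i)$ to a subsequence (still denoted $n_i$) along which, simultaneously for every fixed $k\ge 0$, $J^{(i)}_k\to J_k$ in the Hausdorff metric and $c_{n_i+1-k}\to y_{-k}\in I$. Since the monotone condition $c\notin (J^{(i)}_k)^\circ$ forces each $J^{(i)}_k$ entirely into $[0,c]$ or $[c,1]$, a further pigeonhole refinement on each level $k$ (then diagonalise once more) ensures that all the $J^{(i)}_k$ for a fixed $k\ge 1$ lie on the same side of $c$; consequently $c\notin J_k^\circ$.

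I then verify the limit data form the desired infinite monotone pull-back. Hausdorff convergence commutes with the continuous map $T$, yielding $T(J_{k+1})\subset J_k$; taking limits in $c_{n_i+1-k}\in J^{(i)}_k$ and in $T(c_{n_i+1-k})=c_{n_i+2-k}$ gives $y_{-k}\in J_k$ and $T(y_{-k})=y_{-(k-1)}$, so $(\ldots,y_{-2},y_{-1},y_0)$ is a backward $T$-orbit and $(J_k)_{k\ge 0}$ is a monotone pull-back of $U$ along it. To match Definition~\ref{def:pullback}'s ``largest interval'' clause, I replace each $J_k$ ($k\ge 1$) inductively with the maximal interval containing $y_{-k}$ on its already-chosen side of $c$ whose image under $T$ lies in the previous interval; this enlargement remains on one side of $c$, hence preserves monotonicity and all inclusions. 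Finally, because $n_i+1-k\to\infty$ for each $k$, the point $y_{-k}$ is an accumulation point of the forward critical orbit, so $y_{-k}\in\omega(c)$; in particular the 0-th coordinate lies in $\overline U\cap\omega(c)$ and is the point the lemma calls $y$.

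The principal obstacle is the simultaneous bookkeeping in the second step, where one subsequence must make every level converge in the Hausdorff metric, every corresponding orbit point converge, and each level commit to a fixed side of $c$; successive diagonalisations and pigeonhole handle this. A secondary technical point is that Hausdorff limits do not automatically yield the \emph{largest} interval required by Definition~\ref{def:pullback}, but the canonical one-sided enlargement described above remedies this without disturbing monotonicity or the membership of the limit orbit.
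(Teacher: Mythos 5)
Your argument is correct in substance and follows essentially the same route as the paper's proof: there, too, one extracts a consistent infinite pull-back from the arbitrarily long finite ones by a compactness argument, using that $T^{-k}(U)$ has only finitely many components, so that pigeonhole plus diagonalisation selects at each level $k$ a single branch $U^k$ containing $c_{n_i+1-k}$ for infinitely many $i$. Your substitute device --- Hausdorff compactness of the space of closed subintervals --- is interchangeable with this (at each level the $J^{(i)}_k$ range over finitely many components, so your Hausdorff limits are eventually attained along a further subsequence), and your explicit handling of the side-of-$c$ bookkeeping and of the ``largest interval'' clause of Definition~\ref{def:pullback} is, if anything, more careful than the paper's one-line treatment.

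The one step you should not wave through is the last sentence: the $0$-th coordinate of your limit orbit is $\lim_i c_{n_i+1}$, which is some point of $\overline{U}\cap\omega(c)$ but need not be the point $y$ named in the statement --- the hypothesis only places the $c_{n_i+1}$ in $U$, it does not make them accumulate at $y$. To terminate the backward orbit at $y$ itself one must either pass first to a subsequence along which $c_{n_i+1}\to y$ (possible only when $y$ is an accumulation point of these iterates), or define $y_{-k}$ as the preimage of $y$ under the monotone branch $T^k|_{U^k}$, at the price of then having to argue separately that these preimages lie in $\omega(c)$. To be fair, the paper's own proof is equally silent here (it asserts $T^k(y_{-k})=y$ and $y_{-k}\in\omega(c)$ simultaneously without reconciling the two), and for the intended use in Theorem~\ref{thm:FisE} it suffices that the infinite monotone pull-back exist along a backward orbit in $\omega(c)$ terminating at \emph{some} point of $\omega(c)$; but as a proof of the lemma as literally stated, this identification is a genuine (if shared) loose end, and you should either justify it or note the weaker conclusion you actually obtain.
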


\begin{proof}
	Note that the preimage of every interval consists of at most two intervals. So for every $k\in\N$ it is possible 
	to find a maximal $U^k$ such that $T^k(U^k)=U$ and $U^k$ contains $c_{n_i-k+1}$ for infinitely many 
	$i\in\N$ ($k<n_i+1$). Since we assumed that $U$ can be monotonically pulled-back along $(c_1, \ldots, c_{n_i+1})$ 
	for every $i\in\N$, $U^k$ can be chosen such that $c\not\in U^k$ for every $k\in\N$. 
	Thus $U, U^1, U^2, \ldots$ is a monotone pull-back of $U$ along an infinite backward 
	orbit $(\ldots, y_{-2}, y_{-1}, y)$, where $T^k(y_{-k})=y$, $y,y_{-k}\in U^k$ and $y_{-k}\in\omega(c)$ for every $k\in\N$.
\end{proof}

Next we characterise when $\F=\E$. Some partial results are already known. Namely,
$\F=\E$ when $Q(k)\to\infty$ and if $T|_{\omega(c)}$ is one-to-one, see \cite{Al}. 
However, there are examples which show that the converse does not hold; $\F=\E$ does not imply $Q(k)\to\infty$ or $T|_{\omega(c)}$ being one-to-one, see \cite{AlBr}. 
The question of distinguishing endpoints within the set of folding points originated from the study 
of infinitely renormalisable unimodal maps $f$. Then $f|_{\omega(c)}$ is conjugate to an adding machine (see \cite{dMvS}) 
and $\F=\E$. However, having an embedded adding machine (which can also happen in 
non-renormalisable case, see \cite{BKM} for the construction of strange adding machines) does not suffice to have $\F=\E$. 

\begin{theorem}\label{thm:FisE}
For $X'$ it holds that $\F=\E$ if and only if $c$ is persistently recurrent.
\end{theorem}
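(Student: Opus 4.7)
The plan is to prove both directions by exploiting the following correspondence: a non-degenerate sub-arc of the basic arc $A(\overleftarrow x)$ containing a folding point $x$ in its arc-interior yields a monotone pull-back of a non-degenerate interval around $x_0$ along the backward orbit $(x_{-n})\subset\omega(c)$, and Lemma~\ref{lem:infinite} ties the existence of such pull-backs to reluctant recurrence.

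\emph{Direction $(\Rightarrow)$, reluctantly recurrent $\Rightarrow \F\neq\E$.} I will construct an explicit point in $\F\setminus\E$. Lemma~\ref{lem:infinite} supplies a point $y\in\omega(c)$, an interval $J\ni y$ with $y\in J^\circ$ (after shrinking to ensure $J\subset[c_2,c_1]$), an infinite backward orbit $(\ldots,y_{-2},y_{-1},y)\subset\omega(c)$, and a monotone pull-back $(J_n)_{n\ge 0}$ of $J$ along this orbit. Monotonicity means $c\notin J_n^\circ$ for $n\ge 1$, so each $T|_{J_{n+1}}$ is monotone; together with $J_n\subset[c_2,c_1]=T(I)$ and the maximality of $J_{n+1}$, this makes $T|_{J_{n+1}}\colon J_{n+1}\to J_n$ a homeomorphism. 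Then
\[
H:=\{z\in X' : z_{-n}\in J_n\text{ for every }n\ge 0\}
\]
is an inverse limit of intervals under monotone surjective bonding maps, hence a non-degenerate arc with $\pi_0\colon H\to J$ a homeomorphism. The point $x:=(\ldots,y_{-2},y_{-1},y)\in H$ satisfies $x_0=y\in J^\circ$, so $x$ is interior to the arc $H$; two sub-arcs of $H$ on opposite sides of $x$ witness $x\notin\E$, while Proposition~\ref{prop:foldpts} yields $x\in\F$ because every $x_{-n}\in\omega(c)$.

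\emph{Direction $(\Leftarrow)$, persistently recurrent $\Rightarrow \F\subseteq\E$.} I will argue the contrapositive: given $x\in\F\setminus\E$, derive reluctant recurrence. Using shift-invariance of $\F$ and $\E$, the plan is to arrange $x_{-n}\neq c$ for every $n\ge 1$, so that $A(\overleftarrow x)$ is the unique basic arc through $x$; if $x$'s bi-infinite orbit meets $c$ at multiple negative positions, a separate case-analysis combining the union-of-basic-arcs description of a neighbourhood of $x$ in its arc-component with a shift past the last critical backward coordinate is used to reduce to this situation. Since $\pi_0|_{A(\overleftarrow x)}$ is injective and $\pi_0=T^n\circ\pi_n$, each $\pi_n|_{A(\overleftarrow x)}$ is a homeomorphism onto $J_n:=\pi_n(A(\overleftarrow x))$, making $T|_{J_{n+1}}$ monotone and $c\notin J_n^\circ$ for $n\ge 1$; thus $(J_n)$ is already a monotone pull-back of $J_0\ni x_0$ along the backward orbit $(x_{-n})\subset\omega(c)$. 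It remains to check $J_0$ is non-degenerate with $x_0\in J_0^\circ$. If $A(\overleftarrow x)$ were degenerate then, given $x_{-n}\neq c$ for $n\ge 1$, the classification of endpoints forces $x$ to be either a nasty or spiral endpoint (both in $\E$), contradicting $x\notin\E$. If instead $J_0$ is non-degenerate but $x_0\in\partial J_0$, then Lemma~\ref{lem:first} and a case-analysis on $\tau_L(\overleftarrow x),\tau_R(\overleftarrow x)$ via Proposition~\ref{prop:endpt} forces $x\in\E$, again a contradiction. Therefore the required non-degenerate monotone pull-back through $x$ exists, and Lemma~\ref{lem:infinite} delivers reluctant recurrence.

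\emph{Main obstacle.} The hardest step will be the shift-reduction inside direction $(\Leftarrow)$ when the bi-infinite orbit of $x$ hits $c$ at infinitely many negative positions, together with the dichotomy ruling out that a folding non-endpoint with degenerate basic arc and no backward critical coordinate can exist. Both demand a careful case-analysis leaning on Proposition~\ref{prop:endpt}, Lemma~\ref{lem:first}, and the flat/spiral/nasty classification of endpoints; the rest of the argument is a direct translation of the monotone-pull-back picture into the basic-arc language supplied by Section~\ref{sec:prel}.
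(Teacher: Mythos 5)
Your forward direction (reluctant recurrence yields a point of $\F\setminus\E$) is essentially the paper's argument and is fine: Lemma~\ref{lem:infinite} produces intervals $U^k$ with $T^k(U^k)=U$ exactly, so the bonding maps of your arc $H$ really are monotone surjections (your appeal to ``maximality of $J_{n+1}$'' alone would not give surjectivity, since a maximal preimage interval can be cut off by the boundary of the core). The converse direction, however, has a genuine gap. You claim that if $J_0=\pi_0(A(\overleftarrow{x}))$ is non-degenerate and $x_0\in\partial J_0$, then Proposition~\ref{prop:endpt} forces $x\in\E$. It does not: that proposition requires, in addition to $x_0=\inf\pi_0(A(\overleftarrow{x}))$ (say), that $\tau_L(\overleftarrow{x})=\infty$. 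A folding point sitting at an endpoint of its non-degenerate basic arc with $\tau_L,\tau_R<\infty$ is \emph{not} an endpoint of $X$ (it is the common endpoint of two adjacent basic arcs and the arc continues through it), and such points exist whenever $c$ is recurrent. For these points your candidate pull-back has $x_0$ on the boundary of $J_0$, so it never witnesses reluctant recurrence. The paper handles exactly this case by replacing $x$ with $\sigma^{-j}(x)$ for a suitable $j$ (e.g.\ $j=\tau_L$, after which the zeroth coordinate is $c$, which lies in the \emph{interior} of the new basic arc's projection $D_n$ because $c$ is not periodic). Your own shift-reduction is aimed at a non-problem: since $c$ has infinite orbit, a backward orbit can meet $c$ at most once, so ``multiple negative critical positions'' cannot occur.

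Second, even in the interior case you identify the projections $J_n=\pi_n(A(\overleftarrow{x}))$ with ``a monotone pull-back of $J_0$''. By Definition~\ref{def:pullback} the pull-back is the \emph{maximal} chain of preimages, and it may pick up $c$ in its interior at some level even though the basic-arc projections do not; reluctant recurrence is defined through the maximal pull-back. This step can be repaired (fix $\eps$ with $\overline{B_\eps(x_0)}\subset J_0^\circ$ and check, using that $T:J_{n+1}\to J_n$ is a monotone surjection, that the maximal pull-back of $B_\eps(x_0)$ coincides with the restricted chain), but as written the appeal to Lemma~\ref{lem:infinite} is backwards --- that lemma upgrades arbitrarily long finite pull-backs to infinite ones; it does not convert a monotone system of preimages into a monotone maximal pull-back. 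The paper sidesteps this entirely by arguing the implication directly rather than contrapositively: assuming persistent recurrence, it takes a sub-arc $A$ of the basic arc with $x\in A^\circ$ and $\partial\pi_0(A)\cap\orb(c)=\emptyset$, lets $i$ be the first level at which the maximal pull-back of $\pi_0(A)$ contains $c$ in its interior, and shows this forces $c_i\in\partial\pi_0(A)$, contradicting the choice of $A$. Without either that argument or the repair above, your contrapositive does not close.
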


\begin{proof}
If $c$ is reluctantly recurrent, there exists a folding point $x=(\ldots, x_{-1}, x_0)\in X$, an interval $J$ such that 
$x_0\in J^\circ$, and an infinite monotone pull-back $(J_n)_{n\in\N_0}$ of $J$ along $x$. 
Note that $\underleftarrow{\lim}\{J_n, f|_{J_n}\}$ is an arc in $X$ and it contains $x$ in its interior, 
thus $x$ is not an endpoint.

For the other direction, let $c$ be persistently recurrent and assume that there is a folding point $x=(\ldots, x_{-1}, x_0)\in X'$ which is not an endpoint. Without loss of 
generality we can assume that $x$ is contained in the interior of its basic arc. Otherwise, we use 
$\sigma^{-j}(x)$ for some $j\in\N$ large enough. Let $A$ be a subset of the basic arc of $x$ such that $\partial \pi_0(A)\cap \orb(c)=\emptyset$ 
and such that $x\in A^\circ$. Let $A_k:=\pi_k(A) \subseteq[c_2, c_1]$ 
for every $k\in\N_{0}$. Denote by $J=A_0$ and by $(J_n)_{n\in\N_0}$ the pull-back of $J$ along $x$. 
Note that $A_n\subset J_n$ for every $n\in\N_0$. Since $c$ is persistently recurrent, 
there exists the smallest $i\in\N$ such that $c\in J^\circ_{i}$. Thus $A_0=J_0,A_1=J_1,\ldots A_{i-1}=J_{i-1}$ but $A_i\subsetneq J_i$. 
Since $c\not\in A_n^\circ$ for 
every $n\in\N$ (because otherwise $\partial A_0\cap \orb(c)\neq\emptyset$), it follows that $c$ is an endpoint of $A_{i}$, since $T(c)=c_1\in \partial[c_2,c_1]$ (note that it is important here that $A_k \subseteq[c_2, c_1]$ for all $k\in\N_0$). But then $c_i$ is an endpoint of $A_0=A$, 
which is a contradiction.
\end{proof}

\begin{remark}
This actually proves that if $c$ is persistently recurrent, then no non-degenerate basic arc contains a folding point in its interior. 
So the possible folding points in such $X'$ are either degenerate basic 
arcs or flat endpoints. In the rest of this 
section we show that both types can occur and show how that relates to the condition $Q(k)\to\infty$.
\end{remark}

\begin{remark}
Note that $Q(k) \to \infty$ implies that $c$ is persistently recurrent
(but not vice versa, see \cite[Proposition 3.1]{Br5}). However,
$Q(k) \to \infty$ is equivalent to $|D_n| \to 0$.
\end{remark}

\begin{proposition}\label{prop:degarc}
If $Q(k)\to\infty$, then all folding points are degenerate basic arcs (so either spiral or nasty endpoints).
\end{proposition}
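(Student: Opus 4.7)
The plan is to combine the two preliminary remarks with Theorem~\ref{thm:FisE}: since $Q(k)\to\infty$ implies both $|D_n|\to 0$ and persistent recurrence of $c$, Theorem~\ref{thm:FisE} gives $\F=\E$, so every folding point is already an endpoint. It therefore suffices to show that for every $x\in\E$ the basic arc $A(\overleftarrow x)$ is degenerate, since by the definition of flat endpoint this forces $x\in\E_S\cup\E_N$.

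Fix $x\in\E$. By Proposition~\ref{prop:endpt}, $\tau_L(\overleftarrow x)=\infty$ or $\tau_R(\overleftarrow x)=\infty$, so the set $\NL(\overleftarrow x)\cup \NR(\overleftarrow x)$ is unbounded. The key claim is that $\pi_0(A(\overleftarrow x))\subseteq D_n$ for every $n\in \NL(\overleftarrow x)\cup \NR(\overleftarrow x)$. Writing $J_j:=\pi_j(A(\overleftarrow x))$, the kneading-match $s_{-j}=\nu_{n-j}$ (for $j=1,\ldots,n-1$) says that $J_j$ lies on the same side of $c$ as $c_{n-j}$. An induction on $k=1,\ldots,n$ then shows $J_{n-k}\subseteq D_k$: for $k=1$, the symbol constraint $s_{-(n-1)}=\nu_1=1$ together with $J_{n-1}\subseteq[c_2,c_1]$ yields $J_{n-1}\subseteq[c,c_1]=D_1$; for the inductive step, if $c\notin D_k$ then the Hofbauer recursion gives $T(D_k)=D_{k+1}$ and hence $J_{n-k-1}=T(J_{n-k})\subseteq D_{k+1}$, while if $c\in D_k$ then the symbol $\nu_k$ constrains $J_{n-k}$ to the subinterval of $D_k$ on the same side of $c$ as $c_k$, whose monotone $T$-image is exactly $[c_{k+1},c_1]=D_{k+1}$. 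Setting $k=n$ yields $J_0=\pi_0(A(\overleftarrow x))\subseteq D_n$.

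Choosing $n_i\to\infty$ in $\NL(\overleftarrow x)\cup \NR(\overleftarrow x)$, the claim gives $\pi_0(A(\overleftarrow x))\subseteq \bigcap_i D_{n_i}$; since $|D_{n_i}|\to 0$ by $Q(k)\to\infty$, this intersection is a single point, so $A(\overleftarrow x)$ is degenerate and $x\in \E_S\cup\E_N$. The main obstacle is the careful handling of the inductive step $J_{n-k}\subseteq D_k\Rightarrow J_{n-k-1}\subseteq D_{k+1}$, which requires matching the two Hofbauer-tower cases (restart when $c\in D_k$, forward image when $c\notin D_k$) against the kneading-match symbol constraints on $J_{n-k}$ and $J_{n-k-1}$; this is routine but slightly fiddly, and is where restricting to the core $X'$ plays a role so that the branch images in the Hofbauer tower coincide with the $D_n$.
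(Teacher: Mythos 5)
Your proposal is correct and follows essentially the same route as the paper: reduce to endpoints via Theorem~\ref{thm:FisE} (using that $Q(k)\to\infty$ gives persistent recurrence), then use $\tau_L(\overleftarrow{x})=\infty$ or $\tau_R(\overleftarrow{x})=\infty$ together with $\pi_0(A(\overleftarrow{x}))\subseteq\bigcap_{l\in \NL(\overleftarrow{x})\cup\NR(\overleftarrow{x})}D_l$ and $|D_n|\to 0$ to conclude degeneracy. The only difference is that you prove the containment $\pi_0(A(\overleftarrow{x}))\subseteq D_n$ by an explicit Hofbauer-tower induction, whereas the paper simply asserts it (it follows from Lemma~\ref{lem:first}); your induction is a correct verification of that assertion.
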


\begin{proof}
Since $Q(k)\to\infty$, also $|D_n|\to 0$ as $n\to\infty$ and $c$ is persistently recurrent, 
so every folding point is an endpoint. If $x$ is an endpoint, 
then $\tau_L(x)=\infty$ or $\tau_R(x)=\infty$. 
Assume without loss of generality that $\tau_L(x)=\infty$, so $N_L(\ovl x)$ is an infinite set. 
Since $A(\ovl x)\subseteq\cap_{l\in N_L(\ovl x)}D_l$ and $|D_n|\to 0$, it follows that $A(\ovl x)$ is degenerate.
\end{proof}

\begin{remark}\label{rem:degendpoin}
It follows immediately from Proposition~\ref{prop:endpt} that every degenerate basic arc is an endpoint of $X$.
\end{remark}

\noindent
\begin{problem} Is it true that if $Q(k)\to\infty$ and $T$ is not infinitely renormalisable, 
then all the folding points are spiral points?
\end{problem}

\begin{remark} Let us comment the preceding problem. 
Nasty points are realized as nested intersections of non-arc subcontinua, see Proposition~\ref{prop:charaNasty}. So if the subcontinua 
of $X$ are simple enough, nasty points cannot exist. In \cite{BrBr, Br3} the authors give conditions which imply that
all subcontinua are arc+ray continua. In \cite[Theorem 1.1]{Br3} it is shown that if additionally
$Q(k+1) > Q(Q(k)+1)+1$ for all sufficiently large $k$, then all proper subcontinua are points, 
arcs and $\sin (1/x)$-continua. So if this technical assumption can 
be removed, the answer to the problem above is yes.
\end{remark}

\begin{proposition}\label{prop:Qntoinfty}
If $Q(k)\not\to\infty$, then there exists a folding point which is contained in a non-degenerate basic arc.
\end{proposition}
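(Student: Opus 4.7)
The plan is to exploit the equivalence (recorded in the Remark immediately above) $Q(k)\not\to\infty\iff|D_n|\not\to 0$: I will fix $\delta>0$ and a strictly increasing sequence $(n_i)_{i\ge 1}\subset\N$ with $|D_{n_i}|\ge\delta$, writing $D_{n_i}=[c_{n_i},c_{\beta(n_i)}]$, and construct a basic arc of diameter at least $\delta$ for each level $D_{n_i}$. Explicitly, I prescribe a backward itinerary $\overleftarrow{x^{(i)}}\in\{0,1\}^\N$ by $s^{(i)}_{-k}=\nu_{n_i-k}$ for $1\le k\le n_i-1$ and $s^{(i)}_{-k}=1-\nu_1$ for $k\ge n_i$; this extension forces $s^{(i)}_{-(n-1)}\neq\nu_1$ for every $n>n_i$, which makes the match condition in the definition of $N_L,N_R$ fail at every such $n$. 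Hence $\max\{\tau_L(\overleftarrow{x^{(i)}}),\tau_R(\overleftarrow{x^{(i)}})\}=n_i$, and Lemma~\ref{lem:first} yields $\pi_0(A^{(i)})=D_{n_i}$ where $A^{(i)}:=A(\overleftarrow{x^{(i)}})$, of diameter at least $\delta$. I then let $u^{(i)},v^{(i)}\in A^{(i)}$ be the endpoints with $\pi_0(u^{(i)})=c_{n_i}$ (so $\pi_k(u^{(i)})=c_{n_i-k}$ for $0\le k\le n_i$) and $\pi_0(v^{(i)})=c_{\beta(n_i)}$.

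Next, a standard diagonal extraction using compactness of $X$ and of $\{0,1\}^\N$ produces a subsequence $(i_j)$ along which $u^{(i_j)}\to u^*$ and $v^{(i_j)}\to v^*$ in $X$, while, for every fixed $k\ge 1$, the symbol $\nu_{n_{i_j}-k}$ stabilises to some $\mu_k\in\{0,1\}$. Because $n_{i_j}-k\to\infty$ for every fixed $k\ge 0$, each projection $\pi_k(u^*)=\lim_j c_{n_{i_j}-k}$ lies in $\omega(c)$, so $u^*\in\F$ by Proposition~\ref{prop:foldpts}. Moreover, since $u^{(i_j)}$ and $v^{(i_j)}$ share the basic arc $A^{(i_j)}$, both $\pi_k(u^{(i_j)})$ and $\pi_k(v^{(i_j)})$ lie on the $\nu_{n_{i_j}-k}$-side of $c$; passing to the limit yields $s_{-k}(u^*)=s_{-k}(v^*)=\mu_k$ for every $k\ge 1$, so $u^*$ and $v^*$ share the common backward itinerary $\mu$ and both lie on the basic arc $A(\mu)$.

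To close, since $A(\mu)$ is $\pi_0$-parametrised,
\[
|\pi_0(A(\mu))|\ge|\pi_0(u^*)-\pi_0(v^*)|=\lim_j|c_{n_{i_j}}-c_{\beta(n_{i_j})}|=\lim_j|D_{n_{i_j}}|\ge\delta,
\]
which exhibits $A(\mu)$ as non-degenerate and $u^*$ as the required folding point. The main technical obstacle will be the edge case $\pi_k(u^*)=c$ or $\pi_k(v^*)=c$ (which can arise only when $c\in\omega(c)$, i.e.\ $c$ is recurrent), since the backward itinerary $\mu$ is ambiguous at such coordinates; I plan to circumvent it by one further subsequence refinement, using that when $c$ is recurrent $\omega(c)$ is uncountable (Proposition~\ref{prop:uncountablyEndpts}) so one may avoid the single point $c$ in each of the countably many projections.
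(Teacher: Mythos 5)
Your argument is essentially the paper's: it too produces, for each $n_i$ with $|D_{n_i}|\ge\delta$, a basic arc whose $\pi_0$-projection is $D_{n_i}$ (the paper uses the tail $\ldots 111$, realised at the fixed point $r>c$, which keeps the auxiliary arcs inside $X'$ and makes admissibility immediate, whereas your tail $1-\nu_1=0$ pushes $A^{(i)}$ onto the ray $X\setminus X'$, so you should check separately that $\ldots 000\nu_1\cdots\nu_{n_i-1}$ is realised and that the conclusion of Lemma~\ref{lem:first} still gives $\pi_0(A^{(i)})=D_{n_i}$ there), and then lets these arcs accumulate on a basic arc of $\pi_0$-diameter at least $\delta$ containing a folding point. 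The proof is correct; note only that your final ``edge case'' needs no repair at all --- if $\pi_k(u^*)=c$ the symbol $\frac01$ is compatible with $\mu_k$, so $\mu\in\overleftarrow{u^*}$ and $u^*\in A(\mu)$ anyway --- and the fix you propose (steering the limits away from $c$ by refining subsequences, invoking uncountability of $\omega(c)$) would not work as stated, since passing to subsequences cannot change where a convergent sequence of coordinates lands.
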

\begin{proof}
	If $Q(k)\not\to\infty$, then $|D_n|\not\to 0$ so there exists a sequence $(n_i)_{i\in\N}$ and $\delta>0$ such that $|D_{n_i}|>\delta$ for every $i\in\N$. For every $n\in\N$ there exists a basic arc $A_n\subset X'$ with $\pi_0(A_n)=D_n$, 
	\eg take $A_n=A(\ovl x)$ for $\ovl x=\ldots 111\nu_1\ldots\nu_{n-1}$. 
	The sequence of basic arcs $\{A_n\}_{n\in\N_0}\subset X'$ which project to $D_{n_i}$ accumulate on some basic arc $B\subset X'$ with $|\pi_0(B)|\geq\delta$. Note that such a basic arc $B$ must contain a folding point (which can be an endpoint of $B$ or in the interior of $B$).
\end{proof}

Since $\F=\E$ if $c$ is persistently recurrent, we obtain the following statement if we apply $\sigma^i$ for $i\in \Z$ to a flat endpoint provided by Proposition~\ref{prop:Qntoinfty}.

\begin{corollary}
If $Q(k)\not\to\infty$ and $c$ is persistently recurrent, then there exist infinitely many flat endpoints in $X$.
\end{corollary}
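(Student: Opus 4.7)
The plan is to simply chain together the preceding results. By Proposition~\ref{prop:Qntoinfty}, the hypothesis $Q(k) \not\to \infty$ produces a folding point $x \in X'$ that lies in a non-degenerate basic arc $B$. Since $c$ is persistently recurrent, Theorem~\ref{thm:FisE} gives $\F = \E$, so $x$ is in fact an endpoint. But $x$ belongs to the non-degenerate basic arc $B$, which by the very definition of flat endpoint means $x \in \E_F$.

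To upgrade from one flat endpoint to infinitely many, I would invoke the shift-invariance $\sigma^k(\E_F) = \E_F$ noted at the end of the introduction and in Section~\ref{sec:end}. Thus the entire $\sigma$-orbit $\{\sigma^n(x) : n \in \Z\}$ lies in $\E_F$, so it suffices to argue that this orbit is infinite. Since $c$ is persistently recurrent it is in particular recurrent, and we have standing assumption that $\orb(c)$ is infinite; the discussion preceding Lemma~\ref{lem:non-recurrent} then shows $\omega(c)$ is perfect, hence uncountable, and therefore $\F = \underleftarrow{\lim}\{\omega(c), T|_{\omega(c)}\}$ is uncountable. By Corollary~\ref{cor:uncountablyEndpts}, the orbit $\{\sigma^n(x) : n \in \N_0\}$ is dense in $\F$, so it cannot be finite. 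Hence $\E_F$ contains the infinite set $\{\sigma^n(x) : n \in \Z\}$.

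There is essentially no obstacle here; the statement is a clean composition of Proposition~\ref{prop:Qntoinfty}, Theorem~\ref{thm:FisE}, the definition of $\E_F$, and shift-invariance. The only mild subtlety is verifying that shifting genuinely produces new points, but this is immediate from the fact that $\F$ is uncountable together with the density of $\sigma$-orbits of endpoints in $\F$.
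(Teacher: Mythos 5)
Your argument is correct and is essentially the paper's own proof: the authors obtain the corollary in exactly this way, combining Theorem~\ref{thm:FisE} with the folding point supplied by Proposition~\ref{prop:Qntoinfty} and then applying $\sigma^i$ for $i\in\Z$. The only point you gloss over is that the definition of a flat endpoint requires $x$ to be an \emph{endpoint of} its non-degenerate basic arc rather than merely a member of it --- but this is immediate, since an endpoint of $X$ cannot lie in the interior of any arc --- and your explicit verification that the $\sigma$-orbit is infinite (via Corollary~\ref{cor:uncountablyEndpts} and the uncountability of $\F$) supplies a detail the paper leaves implicit.
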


\begin{proposition}
	If $Q(k)$ is unbounded and $T|_{\omega(c)}$ is one-to-one, then there exist infinitely many folding points which are degenerate basic arcs (so either spiral or nasty).
\end{proposition}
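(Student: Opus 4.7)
First, reduce the problem. Since $T|_{\omega(c)}$ is one-to-one, Alvin's result~\cite{Al} (recalled just before Theorem~\ref{thm:FisE}) gives $\F=\E$, and $\omega(c)$ is a minimal Cantor set on which $T$ carries no periodic orbit; via the identification $\F\cong\omega(c)$ supplied by $\pi_0$ the shift $\sigma|_\F$ is a minimal homeomorphism without periodic points. Because $\E_S$ and $\E_N$ are both $\sigma$-invariant (cf.\ Corollary~\ref{cor:uncountablyEndpts}), producing a single endpoint with degenerate basic arc is enough: its $\sigma$-orbit is automatically infinite and entirely contained in $\E_S\cup\E_N$.

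The case $Q(k)\to\infty$ is already covered by Proposition~\ref{prop:degarc}, so assume that $Q$ is unbounded but does not tend to infinity. Definition~\ref{def:longBranched} then gives $\inf_n|D_n|=0$, so we may pick a subsequence $(n_j)$ with $|D_{n_j}|\to 0$ and, after a further refinement, $D_{n_j}\to\{p\}$ with $p\in\omega(c)$. The construction in the proof of Proposition~\ref{prop:Qntoinfty} supplies, for every $j$, a basic arc $A_{n_j}\subset X'$ with $\pi_0(A_{n_j})=D_{n_j}$; by compactness of the hyperspace of subcontinua of $X$, a subsequence of $(A_{n_j})$ converges in Hausdorff metric to a subcontinuum $H\subset X'$. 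Since $\pi_0(H)=\{p\}$ by continuity and $\pi_0^{-1}(\{p\})$ embeds into the totally disconnected product $\prod_{k\ge 0}T^{-k}(\{p\})$ of finite preimage sets, $H$ collapses to a single point $\{x\}$. Each projection $\pi_k(A_{n_j})$ is an interval of monotonicity of $T^{n_j-k}$ with one endpoint equal to $c_{n_j-k}\in\orb(c)$, so $|\pi_k(A_{n_j})|\to 0$ and $\pi_k(x)\in\overline{\orb(c)}=\omega(c)$ for every $k$; Proposition~\ref{prop:foldpts} then yields $x\in\F$.

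To conclude that $x\in\E_S\cup\E_N$ we must upgrade $x$ to an endpoint with \emph{degenerate} basic arc, i.e.\ prove $A(\ovl x)=\{x\}$. By Lemma~\ref{lem:first} this amounts to showing $\sup\{c_n:n\in N_L(\ovl x)\}=\inf\{c_n:n\in N_R(\ovl x)\}=p$. The plan is to pick the $n_j$ among cutting times $S_{k_j}$ with $Q(k_j+1)\to\infty$, so that equation~(\ref{eq:zzz}) forces $c_{S_{k_j}}$ to approach $c=p$ from a prescribed side; a parity count on $\#_1(\nu_1\ldots\nu_{S_{k_j}-1})$ then distributes infinitely many of these indices into both $N_L(\ovl x)$ and $N_R(\ovl x)$, pinning the extremal $c_n$-values over those two sets simultaneously to $p$. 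Remark~\ref{rem:degendpoin} then places $x$ in $\E_S\cup\E_N$, and the reduction from the first paragraph upgrades this single example to the required infinite set of spiral or nasty endpoints. The main obstacle is precisely this last symbolic bookkeeping: the Hausdorff-limit argument on its own only yields a folding point, and controlling the limit tail $\ovl x=\lim_j\ovl{x_{n_j}}$ so that both $N_L(\ovl x)$ and $N_R(\ovl x)$ contain arbitrarily large entries whose $c_n$-values pin both the supremum and the infimum to $p$ requires a careful choice of subsequence and a fine reading of the Hofbauer-tower structure inherited from Proposition~\ref{prop:Qntoinfty}.
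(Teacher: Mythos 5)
Your construction stops exactly where the real work begins, and you say so yourself: the Hausdorff-limit argument of your second paragraph produces a folding point $x$, but the proposition requires a folding point whose \emph{basic arc} is degenerate, and that is precisely what a limit of shrinking basic arcs does not give for free --- the limit point can perfectly well sit in the interior of a long basic arc on which the short arcs $A_{n_j}$ accumulate (this is exactly the phenomenon exploited in Proposition~\ref{prop:Qntoinfty}). Your third paragraph is a plan, not a proof: the assertions that one can take the $n_j$ among cutting times $S_{k_j}$ with $Q(k_j+1)\to\infty$ \emph{and} $|D_{S_{k_j}}|\to 0$ (note $D_{S_k}=[c_{S_k},c_{S_{Q(k)}}]$, so you also need $c_{S_{Q(k_j)}}\to c$), and that a parity count then forces infinitely many $n_j$ into $\NL(\ovl x)$ and $\NR(\ovl x)$ simultaneously, are exactly the statements that need an argument. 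There is also a misquotation in your first paragraph: the result of Alvin recalled before Theorem~\ref{thm:FisE} gives $\F=\E$ under the conjunction of $Q(k)\to\infty$ \emph{and} injectivity of $T|_{\omega(c)}$, not under injectivity alone, so your reduction to ``$\F=\E$, hence any degenerate basic arc in $\F$ is spiral or nasty'' is unjustified (and unnecessary: Remark~\ref{rem:degendpoin} already makes any degenerate basic arc an endpoint). Most tellingly, the hypothesis that $T|_{\omega(c)}$ is one-to-one never enters your argument in a substantive way, whereas it is the engine of the proof.

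The paper closes the gap as follows. Unboundedness of $Q$ yields an increasing sequence $(n_j)$ with $\beta(n_j)=n_{j-1}$ and $\gamma(n_j)>\beta(n_j)$, so the tower levels are \emph{nested}, $D_{n_j}\subset D_{n_{j-1}}$, and $|D_{n_j}|\to 0$, giving $\cap_j D_{n_j}=\{x_0\}\subset\omega(c)$. Injectivity of $T|_{\omega(c)}$ then provides a \emph{unique} backward orbit $(x_{-i})$ of $x_0$ inside $\omega(c)$, and since $|D_{n_j-i}|\to 0$ with endpoints along $\orb(c)$, this unique preimage $x_{-i}$ is forced to lie in $D_{n_j-i}$ for all large $j$ and all $i<\beta(n_j)$. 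This pins the symbolic tail of $x=(\ldots,x_{-1},x_0)$ to the kneading sequence along the $n_j$, so that a subsequence $n_{j_k}$ lies in $\NL(\ovl x)$ (or $\NR(\ovl x)$), whence $\pi_0(A(\ovl x))\subseteq\cap_k D_{n_{j_k}}$ is a single point and $A(\ovl x)$ is degenerate; applying $\sigma^i$ gives infinitely many such points. If you want to salvage your route, you must replace the abstract Hausdorff limit by this explicit control of the backward itinerary, and that control is exactly what the injectivity hypothesis buys.
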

\begin{proof}
	Given $n\in \N$, recall that if $S_k < n\leq S_{k+1}$, then $\beta(n) = n-S_k$; similarly 
	define $\gamma(n) := S_{k+1}-n$. As $Q(k)$ is unbounded, we may take an increasing sequence 
	$\{n_j\}_{j\geq 1}$ such that $\beta(n_j) = n_{j-1}$ and $\gamma(n_j) > \beta(n_j)$ for 
	all $j\in \N$. Since $D_n\subset D_{\beta(n)}$ for every $n$, then $D_{n_j} \subset D_{n_{j-1}}$ for all $j\in \N$. Also, since $T$ is locally eventually onto and $\beta(n_j)\to\infty$ implies $\gamma(n_j)\to \infty$, it follows that $|D_{n_j}|\to 0$. 
	Thus $\cap_{j>1} D_{n_j} = \{x_0\}\subset \omega(c)$. Note that because $x_0$ has a unique preimage in $\omega(c)$ and because $|D_{n_j-1}|\to 0$ as $j\to \infty$, it follows that the unique preimage $x_{-1}$ of $x_0$ in $\omega(c)$ must lie in $D_{n_j-1}$ for all large $j$. Similarly, there is a unique $i$-th preimage $x_{-i}$ of $x_0$ in $\omega(c)$ that must lie in $D_{n_j-i}$ for all large $j$ and for all $i = 1, 2, \ldots, \beta(n_j)-1$. Then $x = (\ldots, x_{-i}, \ldots, x_{-2},x_{-1},x_0)\in \mathcal{F}$ with either $\tau_{L}(\ovl x)=\infty$ or $\tau_{R}(\ovl x)=\infty$. Without loss of generality, there exists a subsequence $\{n_{j_k}\}$ such that $n_{j_k} \in \NL(\ovl x)$ for all $k\in \N$. 
	Since $A(\ovl x)\subseteq\cap_{l\in N_L(\ovl x)}D_l \subseteq \cap_{k\in \N}D_{n_{j_k}}$ 
	and $|D_{n_{j_k}}|\to 0$, it follows that $A(\ovl x)$ is degenerate. Thus we found a folding point being a degenerate basic arc. We apply $\sigma^i$ for $i\in \Z$ to get countably infinitely many such endpoints.
\end{proof}

It thus follows that there exist examples of tent maps with $\omega(c) \neq [c_2,c_1]$ that contain flat endpoints and spiral and/or nasty endpoints; see for an example \cite[Example 3.10]{AlBr}. 
Note that in that example $Q(k)\not\to \infty$ but $T|_{\omega(c)}$ is one-to-one and still $\mathcal{F}=\mathcal{E}$.

\begin{proposition}\label{prop:spiral2}
Assume that $Q(k)\to\infty$ and $Q(k) \leq k-2$ for all $k$ sufficiently large. 
Then $\E_S$ is infinite. 
\end{proposition}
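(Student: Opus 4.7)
The plan is to use Remark~\ref{rem:Qcond2} to build a direct spiral subcontinuum $H \subset X'$. Its spiral endpoint lies in $\E_S$, and since $\sigma$ preserves $\E_S$ while Corollary~\ref{cor:uncountablyEndpts} makes its forward orbit dense in $\F$, infiniteness of $\E_S$ will follow; here $\F = \underleftarrow\lim(\omega(c), T)$ is uncountable because $Q(k)\to\infty$ forces $c$ to be persistently recurrent and hence $\omega(c)$ to be a Cantor set.

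To construct the required sequence, fix $K_0$ so that $Q(k)\leq k-2$ for all $k\geq K_0$, and use $Q(k)\to\infty$ to choose $K_1\geq K_0$ with $Q(k)\geq K_0$ for all $k\geq K_1$. Starting from any $k_0>\max\{K_1, Q(K_1)\}$, I define recursively
\[
k_i := \min\{k \geq K_1 : Q(k) \geq k_{i-1}\}, \qquad i \geq 1.
\]
This minimum exists by $Q(k)\to\infty$, and $Q(k_i) \leq k_i - 2$ forces $k_i \geq k_{i-1}+2$, so $(k_i)$ strictly increases, keeping the condition $k_{i-1}>Q(K_1)$ alive through the induction. Now I verify \eqref{eq:Qcond2}. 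By definition $Q(k_i) \geq k_{i-1} > k_{i-1}-1$. Since $Q(K_1)<k_{i-1}$, the index $K_1$ is not in the defining set, so $k_i>K_1$ and minimality yields $Q(k_i-1)<k_{i-1}$, whence $m := Q(k_i-1)+1 \leq k_{i-1}$. Because $k_i-1 \geq K_1$ we also have $Q(k_i-1)\geq K_0$, hence $m \geq K_0+1$, and the standing hypothesis gives
\[
Q(m) \leq m - 2 \leq k_{i-1} - 2 < k_{i-1} - 1,
\]
so \eqref{eq:Qcond2} holds. Since $Q(k)\to\infty$ is equivalent to $|D_n|\to 0$, the nested intervals $T^{n_i}(L_{n_i})$ appearing in the Remark shrink to a point, and $H$ is a direct spiral rather than a basic $\sin(1/x)$-continuum.

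The main technical obstacle is satisfying both inequalities of \eqref{eq:Qcond2} simultaneously: the right one demands $Q(k_i)$ be \emph{large}, the left one demands $Q(Q(k_i-1)+1)$ be \emph{small}. The hypothesis $Q(k)\leq k-2$ glues these opposing demands together: the minimality of $k_i$ automatically caps $m$ by $k_{i-1}$, and the hypothesis then converts this combinatorial cap into $Q(m)\leq m-2\leq k_{i-1}-2$, supplying the strict inequality required by \eqref{eq:Qcond2}.
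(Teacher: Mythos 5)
Your proof is correct and takes essentially the same route as the paper's: the same recursive choice $k_i=\min\{k:Q(k)\geq k_{i-1}\}$, verification of \eqref{eq:Qcond2} from the hypothesis $Q(k)\le k-2$, and an appeal to Remark~\ref{rem:Qcond2} together with $|D_n|\to 0$ to conclude that $H$ is a direct spiral. Your bookkeeping with $K_0,K_1$ merely makes explicit the ``for all $k$ sufficiently large'' caveat that the paper leaves implicit, and your argument for infiniteness (the $\sigma$-orbit of a spiral endpoint is dense in the uncountable set $\F$) is a harmless variant of the paper's observation that $\sigma^j(H)\neq H$ for all $j\in\Z\setminus\{0\}$.
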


\begin{proof}
Define recursively a sequence $(k_i)_{i \geq 1}\subset \N_0$ by setting $k_i = \min\{ k : Q(k) > k_{i-1}-1\}$.
Then obviously $Q(k_i)>k_{i-1}-1$ and $Q(k_i-1) \leq k_{i-1}-1$. So by assumption,
$Q(Q(k_i-1)+1) < Q(k_i-1) \leq k_{i-1}-1$. Therefore $(k_i)_{i \geq 1}$ satisfies \eqref{eq:Qcond2}
from Remark~\ref{rem:Qcond2} which gives the existence
of a subcontinuum $H$ that is a direct spiral or a basic $\sin(1/x)$-continuum.
However, since $Q(k) \to \infty$ and hence $|D_n| \to 0$, the latter is not possible.
Therefore $\E_S \neq \emptyset$, and since $\sigma^j(H) \neq H$ for all $j \in \Z \setminus \{ 0 \}$,
$\E_S$ is infinite.
%
\end{proof}

\begin{remark}[Example 3.5 from \cite{Al}]
	Consider the symmetric tent map $T$ with kneading map $$Q(k)=\begin{cases}
	0 & \text{ if } k\in \{1,2,4\}, \\ 1 & \text{ if } k = 3, \\ 3\ell - 4 & \text{ if } k = 3\ell-1 \text{ or } 3\ell+ 1 \text{ and } \ell \geq 2, \\ 3\ell - 2 & \text{ if } k = 3\ell \text{ and } \ell \geq 2.
	\end{cases}$$
	Take $(k_i)_{i\geq 3} = (3i-1)_{i\geq 3}$.  Then \eqref{eq:Qcond} holds, and as $Q(k)\to\infty$, it follows that $\mathcal{E}_S\neq \emptyset$. We note that $T$ is non-renormalisable and $T|_{\omega(c)}$ is topologically conjugate to the triadic adding machine. This is in contrast to the infinitely renormalisable maps which have $\mathcal{E}_S = \emptyset$  (cf. Theorem~\ref{thm:nastyIR}).
\end{remark}

\subsubsection{Nasty endpoints}\label{sec:nasty} In this subsection we prove the existence of nasty points in tent inverse limit spaces. Furthermore, we also prove that nasty points are the only endpoints that appear in the core inverse limit spaces of infinitely renormalisable logistic maps. At the end of the subsection we provide some general results about existence of specific endpoints in tent inverse limits.

\begin{definition}
Given a continuum $K$, we call a point $x \in K$ a \emph{nasty point}
if its arc-component is degenerate.
The \emph{set of all nasty points} in the inverse limit space $X$ is denoted by $\E_N$.
\end{definition}

Note that every nasty point in unimodal inverse limit space $X$ is automatically an endpoint since it lies in a degenerate basic arc, see Remark~\ref{rem:degendpoint}. We continue with some more general facts about nasty points in (chainable) continua.

\begin{lemma}\label{lem:nested}
Let $K$ be a non-degenerate continuum.
For every $x\in K$ there exists a nested sequence of non-degenerate subcontinua 
$\{H_i\}_{i\in \N}\subset K$ such that  $\cap_{i\in\N} H_i=\{x\}$.
\end{lemma}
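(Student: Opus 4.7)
The plan is to invoke the classical Boundary Bumping Theorem for continua: if $K$ is a continuum, $U$ is a nonempty open proper subset of $K$, and $C$ is a component of $\overline{U}$, then $C \cap \partial U \neq \emptyset$. This is the standard source of non-degenerate subcontinua in any continuum, and it provides exactly what we need here.

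First, I would choose a nested basis $\{U_i\}_{i \in \N}$ of open neighbourhoods of $x$ in $K$ with $\overline{U_{i+1}} \subset U_i$, $\diam(U_i) \to 0$, and $U_i \neq K$. This is possible because $K$ is a non-degenerate compact metric space: fix $y \in K$ with $y \neq x$, set $r_0 = d(x,y) > 0$, and take $U_i$ to be the open ball around $x$ of radius $r_i$, where $r_i \searrow 0$ with $r_1 < r_0$ (so $y \notin U_1$, ensuring $U_i \neq K$).

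Next, define $H_i$ to be the component of $\overline{U_i}$ that contains $x$. Each $H_i$ is a continuum, and by the Boundary Bumping Theorem $H_i$ meets $\partial U_i$. Since $x$ lies in the open set $U_i$ (hence not in $\partial U_i$), $H_i$ contains at least two distinct points and is therefore non-degenerate. Nesting is automatic: $H_{i+1}$ is a connected set containing $x$ and contained in $\overline{U_{i+1}} \subset U_i \subset \overline{U_i}$, so $H_{i+1}$ is contained in the component of $\overline{U_i}$ through $x$, namely $H_i$.

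Finally, for the intersection, $\bigcap_{i \in \N} H_i \subset \bigcap_{i \in \N} \overline{U_i} = \{x\}$ since $\diam(U_i) \to 0$, while $x \in H_i$ for every $i$. Thus $\bigcap_{i \in \N} H_i = \{x\}$, completing the argument. There is essentially no obstacle here beyond correctly applying the Boundary Bumping Theorem; the only subtle point is guaranteeing $U_i \neq K$ so that the theorem is applicable, which is handled by the non-degeneracy hypothesis on $K$.
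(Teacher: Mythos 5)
Your proof is correct, but it takes a genuinely different route from the paper's. The paper argues via the decomposable/indecomposable dichotomy: it first produces one proper subcontinuum through $x$ (using density of the composant of $x$ when $K$ is indecomposable), then considers the intersection of \emph{all} proper subcontinua of $K$ containing $x$ and shows by contradiction, again via composants, that this intersection must be $\{x\}$. You instead take a shrinking basis of open balls $U_i$ around $x$ with $\overline{U_{i+1}}\subset U_i\subsetneq K$, let $H_i$ be the component of $\overline{U_i}$ through $x$, and invoke the Boundary Bumping Theorem (see \cite{Na}) to make each $H_i$ non-degenerate; nestedness and $\bigcap_i H_i=\{x\}$ then come for free from $\diam(U_i)\to 0$. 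Your construction is more elementary and more explicit: it directly delivers a nested sequence with $\diam(H_i)\to 0$ and avoids the delicate points in the paper's argument (an arbitrary intersection of subcontinua through $x$ need not be connected, and extracting a nested sequence from that family requires an additional step). What the paper's approach buys is that it stays in the language of composants, which is the viewpoint exploited in the neighbouring results such as Proposition~\ref{prop:charaNasty}. Both arguments establish the lemma; yours is the cleaner self-contained one.
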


\begin{proof}
If $K$ is decomposable, then clearly there is a proper subcontinuum $K_1 \owns x$.
If $K$ is indecomposable, the composant of $x$ is dense in $K$ and thus there exists a proper 
subcontinuum $K_1\subset K$ such that $x\in K_1$. 
Let the set $\{H_{\lambda}\}_{\lambda\in \Lambda}$ consist of all proper subcontinua of $K$ containing $x$. The set $H:=\cap_{\lambda\in \Lambda} H_{\Lambda}$ is a continuum. If $H=\{x\}$, we are done since the intersection can be taken nested.\\
	Assume by contradiction that $H$ is a non-degenerate continuum. Then $H$ is indecomposable, because otherwise we could find a non-degenerate continuum $H'\subset H$ such that $H'\neq H_{\lambda}$ for all $\lambda\in \Lambda$. But if $H$ is indecomposable, the composant of $x$ is dense in $H$ so there is a subcontinuum $x\in H''\subsetneq H$, a contradiction.  
\end{proof}

We have the following characterisation of nasty endpoints in an arbitrary chainable indecomposable continuum $K$.

\begin{proposition}\label{prop:charaNasty}
Let $x\in K$ be an endpoint of a non-degenerate chainable continuum $K$. Then $x$ is not contained in an arc of $K$ if and only if there exists a nested sequence of non-degenerate 
subcontinua $\{H_i\}_{i\in \N}\subset K$ such that $\cap_{i\in\N} H_i=\{x\}$ and $H_i$ is not 
arc-connected for all sufficiently large $i\in\N$. 
\end{proposition}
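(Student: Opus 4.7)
The plan is to prove the two implications separately, with the reverse direction being essentially immediate and the forward direction relying on the defining property of endpoints in chainable continua combined with a diameter estimate.

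For the $(\Leftarrow)$ direction, I would apply Lemma~\ref{lem:nested} directly to the endpoint $x$ to obtain a nested sequence of non-degenerate subcontinua $\{H_i\}_{i\in\N}$ with $\cap_i H_i=\{x\}$. It then suffices to check that each such $H_i$ fails to be arc-connected. Indeed, if some $H_i$ were arc-connected, then picking any $y\in H_i$ with $y\neq x$ (possible since $H_i$ is non-degenerate) would yield an arc from $x$ to $y$ contained in $H_i\subset K$, contradicting the assumption that $x$ lies in no arc of $K$.

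For the $(\Rightarrow)$ direction I would argue by contraposition: assume $x$ belongs to some arc $A\subset K$, and let $\{H_i\}$ be any nested sequence of non-degenerate subcontinua with $\cap_i H_i=\{x\}$. Because $x$ is an endpoint of the chainable continuum $K$, any two subcontinua of $K$ containing $x$ are nested. Applying this to a sub-arc $A'\subset A$ and to $A$ itself, the forced nesting implies that $x$ is an endpoint of $A$; in particular, I can choose a non-degenerate sub-arc $A'\subset A$ having $x$ as an endpoint. Now apply the endpoint property of $K$ to the pair $A'$ and $H_i$: for each $i$, either $A'\subset H_i$ or $H_i\subset A'$.

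The last step is a simple diameter argument. Since $\{H_i\}$ is a nested sequence of subcontinua of the compact metric space $K$ whose intersection is a single point, a standard compactness argument gives $\diam(H_i)\to 0$. Consequently, for all sufficiently large $i$, $\diam(H_i)<\diam(A')$, which forces $H_i\subset A'$. But then $H_i$ is a non-degenerate subcontinuum of an arc and therefore itself an arc, hence arc-connected. This contradicts the assumption that $H_i$ fails to be arc-connected for all large $i$, completing the proof.

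The only subtle point—and the main (if minor) obstacle—is to justify that the endpoint property of $K$ transfers to make $x$ an endpoint of any arc $A\subset K$ containing it; this is handled by observing that if $x$ were an interior point of $A$, two small sub-arcs of $A$ lying on opposite sides of $x$ would contradict the nestedness of subcontinua of $K$ through $x$. Once that is in place, the argument is driven entirely by the comparison between $H_i$ and a fixed non-degenerate sub-arc $A'$, together with $\diam(H_i)\to 0$.
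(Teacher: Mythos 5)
Your proof is correct and follows essentially the same route as the paper: Lemma~\ref{lem:nested} produces the nested sequence for one implication, and for the other the endpoint (nestedness) property together with $\diam(H_i)\to 0$ forces $H_i$ inside the arc for large $i$, whence $H_i$ would be an arc. Two cosmetic remarks only: your $(\Rightarrow)$/$(\Leftarrow)$ labels are swapped relative to the statement as written, and the detour through the sub-arc $A'$ and the claim that $x$ is an endpoint of $A$ is unnecessary, since the nestedness dichotomy can be applied directly to $A$ and $H_i$.
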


\begin{proof}
Assume that a point $x$ is not contained in an arc.
By Lemma~\ref{lem:nested} there exists a nested sequence of non-degenerate subcontinua $H_i\subset K$ 
such that $\{x\}=\cap_{i\in\N} H_i$. If $H_i$ is arc-connected for some $i\in\N$, then there 
exists an arc $x\in A\subset K$, a contradiction.\\
Conversely, assume by contradiction that an endpoint $x$ is contained in a non-degenerate arc $A$ and take a nested sequence of non-degenerate non-arc subcontinua $H_i\subset K$ such that $\{x\}=\cap_{i\in\N} H_i$. Since $x$ is an endpoint, it holds that $H_i\subset A$ for large enough $i$, which gives a contradiction.
\end{proof}

\begin{remark}
	Note that Proposition~\ref{prop:charaNasty} fails to be true if $x\in K$ is not an endpoint. Say that $P'=P\cup A$ where $P$ is the pseudo-arc and $A$ an arc and $P\cap A=\{x\}$. Then $x$ is not an endpoint of $P'$, however $\{x\}=\cap_{i\in\N} H_i$ where $H_i\subset P$ are the pseudo-arcs. 
	\\ Furthermore, the assumption of chainability in Proposition~\ref{prop:charaNasty} is needed in the definition of an endpoint.
	 Suppose that we use Lelek's definition of an endpoint of a continuum $K$ (a point in $K$ is an endpoint, if it is an endpoint of every arc contained in $K$). However, Proposition~\ref{prop:charaNasty} with this definition of an endpoint fails to be true since \eg $x\in A\subset P'$ from the last example is an endpoint.
\end{remark}

For infinitely renormalisable quadratic maps we have 
the following simple characterisation of folding points: they are all nasty endpoints.

\begin{theorem}\label{thm:nastyIR}
 If $T = Q_a$ is infinitely renormalisable, then
 $X'$ contains a Cantor set of nasty endpoints. There are no other folding points, i.e., $\F = \E_N$.
\end{theorem}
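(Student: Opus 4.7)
The plan is to combine persistent recurrence with the nested renormalisation structure of $T$. Every infinitely renormalisable unimodal map is persistently recurrent---a classical fact due essentially to Lyubich---because the cyclic periodic intervals $J_i$ contain $c$ and force the depth of any monotone pull-back of a point of $\omega(c)$ to be bounded in terms of the period $p_i$. Theorem~\ref{thm:FisE} then yields $\F = \E$. Moreover, since $T|_{\omega(c)}$ is conjugate to an adding machine, it is a homeomorphism of the Cantor set $\omega(c)$, so Proposition~\ref{prop:foldpts} identifies $\F = \underleftarrow\lim(\omega(c), T|_{\omega(c)})$ with $\omega(c)$ itself via $\pi_0$; in particular $\F$ is a Cantor set.

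It remains to show every $x = (\ldots, x_{-1}, x_0) \in \F$ is nasty. For each $i$, write the cycle as $J_i = \bigsqcup_{k=0}^{p_i - 1} I_{i,k}$, with $c \in I_{i,0}$ and $T(I_{i,k}) = I_{i,(k+1) \bmod p_i}$, and let $k_n \in \{0,\ldots,p_i-1\}$ be the unique index with $x_{-n} \in I_{i,k_n}$. Define
$$
H_i(x) := \{ y \in X' : \pi_n(y) \in I_{i,k_n} \text{ for all } n \in \N_0 \}.
$$
Then $H_i(x)$ is a non-degenerate subcontinuum of $X'$ containing $x$, and after an appropriate shift by $\sigma^{k_0}$ and subsequence extraction $(\pi_{np_i})_{n \ge 0}$, $H_i(x)$ is homeomorphic to the core inverse limit $\underleftarrow\lim(I_{i,0}, T^{p_i}|_{I_{i,0}})$ of the $i$-th renormalisation of $T$. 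This renormalisation is itself infinitely renormalisable with critical point $c$ of infinite orbit, so the first paragraph applied to it gives a Cantor set of folding points inside $H_i(x)$, each with degenerate arc-component in $H_i(x)$; in particular $H_i(x)$ has uncountably many arc-components and is not arc-connected. The nesting $H_{i+1}(x) \subset H_i(x)$ is immediate from $J_{i+1} \subset J_i$, and the defining condition $\max_k \diam(I_{i,k}) \to 0$ of infinite renormalisability forces $\diam \pi_n(H_i(x)) \to 0$ for each fixed $n$, so $\bigcap_i H_i(x) = \{x\}$.

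Since $X'$ is chainable and $x \in \E$, Proposition~\ref{prop:charaNasty} concludes $x \in \E_N$, hence $\F \subseteq \E_N$; the reverse inclusion is immediate, so $\F = \E_N$, and because $\F$ is a Cantor set, so is $\E_N$. The main subtlety I anticipate is avoiding circularity when ruling out arc-connectedness of $H_i(x)$: one must apply Theorem~\ref{thm:FisE} to each renormalisation separately. This is legitimate because persistent recurrence (equivalently the $\F=\E$ dichotomy) is hereditary under renormalisation of an infinitely renormalisable map, so the same argument applies uniformly at every level $i$.
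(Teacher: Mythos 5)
Your overall strategy is the same as the paper's: produce, for each folding point $x$, a nested sequence of subcontinua coming from the renormalisation cycles that shrinks to $\{x\}$ and consists of non-arc-connected continua, then invoke Proposition~\ref{prop:charaNasty}. (Your preliminary use of Theorem~\ref{thm:FisE} to get $\F=\E$ is a sensible addition, since Proposition~\ref{prop:charaNasty} needs $x$ to be an endpoint before it can be applied; the paper is terse on this point.)

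There is, however, one genuine gap: the justification that $H_i(x)$ is not arc-connected. You write that ``the first paragraph applied to [the renormalisation] gives a Cantor set of folding points inside $H_i(x)$, \emph{each with degenerate arc-component} in $H_i(x)$.'' The first paragraph gives only $\F=\E$ and that $\F$ is a Cantor set; it does \emph{not} give degenerate arc-components. Endpoints may be flat or spiral, and those lie in non-degenerate arcs, so ``endpoint'' cannot be upgraded to ``nasty'' for free --- that upgrade is precisely the content of the theorem you are proving. Reading your step as an appeal to the theorem at renormalisation level $i+1$ makes the argument circular, and since the tower of renormalisations is infinite there is no base case; your closing remark that persistent recurrence is hereditary does not repair this, because heredity of persistent recurrence again only yields $\F=\E$ at every level, not $\F=\E_N$. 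The good news is that the conclusion you actually need is weaker and already follows from what you have established: an arc has exactly two endpoints, so a chainable continuum with a Cantor set of endpoints is not an arc, and a chainable arc-connected continuum is an arc (or a point); hence $H_i(x)$, being chainable, non-degenerate and not an arc, is not arc-connected. This is exactly the (equally terse) route the paper takes --- ``non-degenerate and not arc-connected (since they are not arcs)'' --- and with that substitution your proof is correct.
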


\begin{proof}
 Since $T$ is infinitely renormalisable, there is a nested sequence $J_i$ of $p_i$-periodic cycles of intervals
 $J_i = \{ J_{i,k} \}_{k=0}^{p_i-1}$, where $J_{i,0} \owns c$, $T(J_{i,k}) = J_{i,k+1}$ for $0 \leq k < p_i$
 and $T(J_{i,p_i-1}) = J_{i,0}$. We have $\omega(c) = \cap_i J_i$ and it is a Cantor set.
 Associated to $J_{i,k}$ are subcontinua
 $$
 G_{i,k} = \{ x \in X : \pi_{j p_i}(x) \in J_{i,k} \text{ for all } j \geq 0\},
 $$
 and each $G_{i,k}$ is homeomorphic to the inverse limit space of the $i$-th renormalisation of $T$,
 and hence non-degenerate and not arc-connected (since they are not arcs).
 Furthermore, $\diam(G_{i,k}) \to 0$ as $i \to \infty$.
 Therefore we have an uncountable collection of sequences $(k_i)_i$ with $G_{i,k_i} \supset G_{i+1, k_{i+1}}$
 such that $\cap_i G_{i,k_i}$ is a single point satisfying the characterisation of a nasty endpoint.
 
 If $x \in X'$ is not of this form, then there are $j,i$ such that the projection $\pi_j(x) \notin J_i$.
 But that means that $\pi_j(x) \notin \omega(c)$, so $x$ is not a folding point.
 Since the set of folding points is a Cantor set  (i.e., $\underleftarrow{\mathrm{lim}}(\omega(c),T|_{\omega(c)})$ is nowhere dense and perfect as in the argument at the start of Section~\ref{sec:endpoint}), 
 it follows that $\F = \E_N$ is the Cantor set.
\end{proof}

Now we return to non-renormalisable maps.
The following result of Barge, Brucks and Diamond \cite{BBD} gives a way to find nasty points.

\begin{proposition}[Theorem 4 in \cite{BBD}]\label{prop:BBD}
	For a dense $G_{\delta}$ set of parameters $s\in[\sqrt 2, 2]$ it holds that
	every open set in $X'$ contains a homeomorphic copy of every tent inverse limit space. 
\end{proposition}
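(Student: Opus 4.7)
The plan is a Baire category construction. Fix a countable dense set $\{t_n\}_{n\geq 1}\subset [\sqrt 2, 2]$ of slopes and, for each $n, k\in\N$, let $\mathcal{A}_{n,k}$ denote the set of slopes $s\in[\sqrt 2, 2]$ with the property that every open ball of diameter at most $1/k$ in $X'_s := \CUIL$ contains a homeomorphic copy of $X'_{t_n} := \underleftarrow\lim([T_{t_n}^2(c),T_{t_n}(c)], T_{t_n})$. My goal is to show that each $\mathcal{A}_{n,k}$ is open and dense; the intersection $\bigcap_{n,k}\mathcal{A}_{n,k}$ is then a dense $G_\delta$ subset of $[\sqrt 2, 2]$ with the required property, because chainability plus a standard monotonicity argument shows that every tent inverse limit is homeomorphic to a subcontinuum of some $X'_{t_n}$ (or is approximated by one at arbitrarily small Hausdorff distance via the dense family).

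For openness of $\mathcal{A}_{n,k}$, I would use continuous dependence of $T_s$ on $s$ together with the fact that the combinatorics of $X'_s$ is encoded by the chains induced by the finite projections $\pi_0, \pi_1, \dots, \pi_N$. If a copy of $X'_{t_n}$ inside $X'_s$ is realized as an inverse limit over a nested sequence of subintervals $(J_i)_{i \geq 0}$ of $[c_2, c_1]$ whose $T_s$-iterates respect the desired pattern, then for slopes $s'$ sufficiently close to $s$ the same intervals (slightly perturbed) realize a homeomorphic copy of $X'_{t_n}$ inside $X'_{s'}$ of comparable diameter, so $\mathcal{A}_{n,k}$ contains a neighbourhood of $s$.

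The density step is the technical core. Given $s\in [\sqrt 2, 2]$ and $\varepsilon>0$, I would produce a slope $s'\in(s-\varepsilon, s+\varepsilon)$ whose kneading sequence contains, as a subword at arbitrarily late positions, arbitrarily long initial segments of the kneading sequence of $T_{t_n}$. Two tools make this possible: first, the order-preserving parameterisation of the tent family by kneading sequences, which lets one prescribe any admissible finite kneading prefix by adjusting $s$; and second, the locally eventually onto property of $T_s$ on $[c_2,c_1]$ for $s>\sqrt 2$, which ensures that arbitrarily small subintervals of the core are mapped onto the entire core by high enough iterates. Combining these, the inverse limit over such a small interval, truncated to later coordinates, is homeomorphic to $X'_{s'}$ itself; shrinking the symbolic prefix match until its diameter falls below $1/k$ places $s'$ in $\mathcal{A}_{n,k}$.

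The main obstacle is the admissibility bookkeeping in the density step: one must realise the prescribed symbolic prefix as the kneading sequence of an actual tent map while simultaneously forcing the matched block to occur at a late enough position that the corresponding embedded copy has small diameter. This is essentially the content of the characterisation pursued in Appendix~\ref{Appendix}, where the $G_\delta$ set $\cA$ is identified with the tent parameters satisfying $\overline{\{T^{S_k}(c)\}}=[T^2(c),T(c)]$; that characterisation, applied coordinate-wise, supplies the combinatorial flexibility needed to complete the density argument.
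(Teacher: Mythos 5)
This statement is not proved in the paper at all: it is quoted verbatim as Theorem~4 of \cite{BBD}, and the authors only record (after the statement) the characterising property of the parameter set $\cA$ that the BBD construction produces. So there is no internal proof to compare against; your proposal has to stand on its own, and as written it has genuine gaps.

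The most serious one is the reduction to a countable dense family $\{t_n\}$ of target slopes. To pass from ``every small ball contains a copy of $X'_{t_n}$ for all $n$'' to ``every open set contains a copy of every tent inverse limit space,'' you invoke that every $X'_t$ is a subcontinuum of some $X'_{t_n}$ or is approximated in Hausdorff distance. Neither works: Hausdorff approximation does not preserve homeomorphism type, and subcontinuum containment fails badly (by Proposition~\ref{prop:longbranched}, if $T_{t_n}$ is long-branched the only proper subcontinua of $X'_{t_n}$ are arcs, so it contains no copy of any non-arc $X'_t$; and by the Ingram conjecture distinct slopes give non-homeomorphic cores). A second gap is openness of your sets $\mathcal{A}_{n,k}$: containing a homeomorphic copy of $X'_{t_n}$ in every $1/k$-ball is a condition on the \emph{infinite} combinatorics of $\orb(c)$ under $T_s$, and there is no reason it persists under perturbation of $s$ --- ``the same intervals, slightly perturbed'' must satisfy infinitely many nested conditions simultaneously. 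This is precisely why \cite{BBD} instead define $\cA$ by countably many \emph{finite-precision} conditions on $T_s$ alone (for each rational $a$ and each $\delta=1/k$, existence of some $n$ with $T_s^n(c_2)\in(a-\delta,a+\delta)$, $T_s^n$ monotone on two prescribed pieces, etc.); each such condition is a union over $n$ of open sets, giving the $G_\delta$ structure for free, and the self-similarity is then derived from that property. Finally, your density step ends by producing a copy of $X'_{s'}$ inside itself, not a copy of $X'_{t_n}$; matching a long prefix of the kneading sequence of $T_{t_n}$ at a late position does not by itself yield an embedded copy of $X'_{t_n}$ --- one must build an infinite nested sequence of intervals whose bonding maps replicate the combinatorics of $T_{t_n}$ at every level, which is the actual technical core of the BBD argument and is not addressed in your sketch.
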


We denote this $G_\delta$ set of parameters by $\mathcal A$ 
(it is originally  denoted by $A$ in \cite{BBD}). 
The characterising property for $s\in \cA$ is that for any $a\in[c_2, c]$ and 
$\delta>0$ there exist $n\in\N$ and $c_2<a_s<b_s<c_1$ such that $T_s^n(c_2)\in (a-\delta, a+\delta)$, 
$T_s^n(a_s)=c_2$, $T_s^n(b_s)=c$ and $T_s^n$ is monotone on $[c_2, a_s]$ and $[a_s, b_s]$. 
Note that for every parameter $s\in \mathcal A$ the critical orbit is dense in the core. The properties of $\mathcal A$ are further discussed in Appendix~\ref{Appendix}.
	
  The following statement interprets Proposition~\ref{prop:BBD} in a different setting.

\begin{corollary}\label{cor:nasty_pts_exist}
For $s\in \cA$ there exists a dense set of nasty endpoints $x\in X'$. Furthermore, the cardinality $\#(\E_N) = 2^{\aleph_0}$.
\end{corollary}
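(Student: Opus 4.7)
The proof centers on iterating Proposition~\ref{prop:BBD} to produce nested subcontinua that shrink to a single point, which we then verify is a nasty endpoint via Proposition~\ref{prop:charaNasty}. Fix any open set $U \subset X'$. By Proposition~\ref{prop:BBD}, I can embed a homeomorphic copy $K_1 \cong X'$ inside $U$. Inside any sufficiently small open subset of $K_1$ I then embed $K_2 \cong X'$, and continue recursively to obtain a nested sequence $K_1 \supset K_2 \supset \cdots$ with $\mathrm{diam}(K_i) \to 0$. Setting $\{x\} = \bigcap_i K_i \subset U$, each $K_i \cong X'$ is indecomposable (since $s \in \cA \subset [\sqrt 2, 2]$) and in particular not arc-connected, so the nested family $(K_i)_{i \in \N}$ already satisfies the non-arc-connected subcontinuum hypothesis of Proposition~\ref{prop:charaNasty}.

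To conclude $x \in \E_N$, I still need to verify that $x$ is an endpoint of the ambient space $X'$, and this is the main obstacle: a point that is an endpoint of a proper subcontinuum $K_i$ need not be an endpoint of $X'$. The plan is to control the itinerary of $x$ directly via the symbolic characterisation of Proposition~\ref{prop:endpt}. Since for $s \in \cA$ the critical orbit is dense and hence $\E$ is a dense $G_\delta$ in $X'$, each copy $K_i$ also contains a dense set of endpoints with arbitrarily long initial segments agreeing with $\nu$. Using the detailed characterising property of $\cA$ recalled after Proposition~\ref{prop:BBD} (existence of monotone branches of $T^n$ landing on $c_2$ and $c$), I would place $K_{i+1}$ inside $K_i$ so that the image of some basic arc of very small diameter is positioned to force the itinerary of $x$ to have unboundedly long prefixes matching $\nu$ on one side. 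This forces $\tau_L(\overleftarrow{x}) = \infty$ or $\tau_R(\overleftarrow{x}) = \infty$ with $\pi_0(x)$ at the corresponding extreme of $\pi_0(A(\overleftarrow{x}))$, so Proposition~\ref{prop:endpt} identifies $x$ as an endpoint of $X'$; Proposition~\ref{prop:charaNasty} then yields $x \in \E_N$.

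Since $U$ was arbitrary, $\E_N$ is dense in $X'$. For the cardinality $\#(\E_N) = 2^{\aleph_0}$: at each stage of the construction there are uncountably many admissible positions for $K_{i+1}$ inside $K_i$, and distinct choice sequences produce distinct points $x$; thus the construction already yields $2^{\aleph_0}$ distinct nasty endpoints, while the reverse bound is trivial from $\#(X') \leq 2^{\aleph_0}$. Alternatively, once a single $x \in \E_N$ has been produced, Corollary~\ref{cor:uncountablyEndpts} guarantees that its shift orbit is dense in $\F = X'$, and combined with the density of $\E_N$ this again delivers uncountably many nasty endpoints. The delicate bookkeeping that turns intrinsic endpoint data of the $K_i$'s into a genuine endpoint condition in $X'$ is where the bulk of the work lies.
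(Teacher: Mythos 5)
Your skeleton---iterate Proposition~\ref{prop:BBD} to obtain nested non-arc subcontinua $K_1\supset K_2\supset\cdots$ with $\diam(K_i)\to 0$ and $\bigcap_i K_i=\{x\}$, then invoke Proposition~\ref{prop:charaNasty}, with uncountability coming from the uncountably many choice sequences---is exactly the paper's. The genuine gap is at the step you yourself single out as ``the main obstacle'': verifying that $x$ is an endpoint of $X'$. Your plan is to engineer the placement of each $K_{i+1}$ inside $K_i$ so that the backward itinerary of $x$ acquires unboundedly long prefixes of $\nu$, and then to apply Proposition~\ref{prop:endpt}. But no argument is given that such placements exist, nor that they force $\tau_L(\overleftarrow{x})=\infty$ or $\tau_R(\overleftarrow{x})=\infty$ \emph{together with} $\pi_0(x)$ lying at the corresponding extreme of $\pi_0(A(\overleftarrow{x}))$; as written this is a programme, not a proof, and it is incomplete precisely at its crux.

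The idea you are missing makes the step almost immediate: the points produced by the nested construction are \emph{degenerate basic arcs}, and every degenerate basic arc is automatically an endpoint of $X'$ (Remark~\ref{rem:degendpoin}; indeed, by Lemma~\ref{lem:first}, if both $\tau_L(\overleftarrow{x})$ and $\tau_R(\overleftarrow{x})$ were finite then $\pi_0(A(\overleftarrow{x}))$ would be a non-degenerate $D_n$, and for a degenerate arc the condition on $\pi_0(x)$ in Proposition~\ref{prop:endpt} is vacuous). Since each $K_i$ is not an arc, it has critical projections at arbitrarily deep coordinates, and together with $\diam(K_i)\to 0$ this pins $A(\overleftarrow{x})$ down to the single point $x$; no itinerary bookkeeping is needed. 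Two smaller points: your fallback cardinality argument (``the shift orbit of one nasty point is dense, plus density of $\E_N$, gives uncountably many'') is invalid, since a countable set can be dense, so you must rely on the choice-sequence count; and when iterating the embedding you should note that Proposition~\ref{prop:BBD} supplies inside every open set a copy of $X'$ itself, so that the embedded copy again enjoys the self-similarity property and the recursion, with $K_{i+1}$ genuinely contained in $K_i$, can continue.
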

\begin{proof}
	Proposition~\ref{prop:BBD} gives a dense set of points $x$ for which 
	there exist non-arc subcontinua $H_i\subset X'$, $H_{i+1}\subset H_i$ for every $i\in\N$, such that $\diam(H_i)\to\infty$ 
	as $i\to\infty$ and $\cap_{i\in\N}H_i=\{x\}$. Since every such $x$ is a degenerate basic arc it is 
	automatically an endpoint of $X'$ by Proposition~\ref{prop:endpt}. 
	The characterisation of nasty points in Proposition~\ref{prop:charaNasty} implies that every such $x$ is a nasty endpoint of $X'$. Note that the construction allows uncountably many nested sequences producing nasty endpoints.
\end{proof}

Next we give an analogue of Proposition~\ref{prop:uncountablyEndpts} 
for the sets of endpoints $\E_F$, $\E_S$, and $\E_N$.

\begin{proposition}\label{prop:FSuncountable}	
	If $s\in \cA$, then the sets $\E_F$, $\E_N$ and $\E_S$ are uncountable when non-empty.
\end{proposition}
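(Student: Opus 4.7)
The plan is to exploit the self-similarity provided by Proposition~\ref{prop:BBD}. For $s\in\cA$, Appendix~\ref{Appendix} gives $\omega(c)=[c_2,c_1]$, so $\F=X'$ by Proposition~\ref{prop:foldpts}. The case of $\E_N$ is already Corollary~\ref{cor:nasty_pts_exist}, where the binary-tree-of-choices construction based on Proposition~\ref{prop:BBD} produces $2^{\aleph_0}$ nasty endpoints from nested non-arc subcontinua. It remains to handle $\E_F$ and $\E_S$ under the hypothesis that they are non-empty.

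For $\E_F$ and $\E_S$ I would imitate this tree construction, starting from a single flat (resp.\ spiral) endpoint $e\in X'$ furnished by the non-emptiness assumption. Given such $e$, in each open set $U\subset X'$ Proposition~\ref{prop:BBD} provides a homeomorphic copy $h\colon X'\to Y\subset U$. Choosing at each stage of the iteration two disjoint such copies, each lying in a prescribed open neighbourhood of the previous one, produces a binary tree of nested subcontinua whose $2^{\aleph_0}$ branches select distinct points. The task is to ensure that each such point is a flat (resp.\ spiral) endpoint of $X'$. For the endpoint property I would use the symbolic characterisations of Lemma~\ref{lem:first} and Proposition~\ref{prop:endpt}: a branch inherits its tail $\ovl x$ from $\ovl e$ up to a shift, so $\tau_L(\ovl x)=\infty$ or $\tau_R(\ovl x)=\infty$, and the point is automatically an endpoint of $X'$. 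To classify the endpoint as flat (resp.\ spiral), I would shrink the copies $Y$ until their diameters drop below the diameter of $A(\ovl e)$ (resp.\ below a pre-fixed scale coming from the direct-spiral structure around $e$). This forces the basic arc of $X'$ through the selected point to coincide with (resp.\ remain degenerate inside) the embedded copy of $A(\ovl e)$, pinning down its type.

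The hard part will be this final type-preservation step, because whether a point of a subcontinuum $Y\subset X'$ is a flat, spiral, or nasty endpoint of $X'$ depends on basic arcs and arc-components of the ambient space $X'$, not merely of $Y$. To handle it I would not use Proposition~\ref{prop:BBD} as a black box but rather invoke its concrete construction, namely the iterate $T^n\colon[a_s,b_s]\to[c_2,c]$ monotone described in the paragraph following Proposition~\ref{prop:BBD}. For sufficiently large $n$ (equivalently, sufficiently small $[a_s,b_s]$) this iterate conjugates the inverse-limit structure over $[c_2,c]$ with that of a small subcontinuum of $X'$ and hence transports basic arcs and arc-components faithfully at that scale. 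Feeding this into the binary tree, each of the $2^{\aleph_0}$ branches delivers a distinct flat (resp.\ spiral) endpoint of $X'$, yielding the desired uncountability.
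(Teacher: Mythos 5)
Your treatment of $\E_N$ coincides with the paper's (both simply invoke Corollary~\ref{cor:nasty_pts_exist}), but the plan for $\E_F$ and $\E_S$ has a fatal flaw rather than merely a ``hard part''. The limit point of any branch of your binary tree is the intersection $\{x\}=\bigcap_j Y_j$ of a nested sequence of copies of $X'$ with diameters tending to $0$. Such a point can never be a flat or spiral endpoint: if $x$ were an endpoint lying on a non-degenerate arc $A$, the endpoint property forces $Y_j\subset A$ as soon as $\diam(Y_j)<\diam(A)$, and a copy of $X'$ (not an arc, not arc-connected) cannot be a subcontinuum of an arc. This is exactly the mechanism of Proposition~\ref{prop:charaNasty} and Corollary~\ref{cor:nasty_pts_exist}: nested non-arc-connected continua shrinking to a point detect \emph{nasty} endpoints. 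So your construction reproduces $\E_N$ a second time and contributes nothing to $\E_F\cup\E_S$; the ``type-preservation step'' you flag as hard is actually impossible. Truncating the nesting at finite depth does not help either, since the embedded images of $e$ over all finite nodes of the tree form only a countable set, and no faithfulness of the monotone iterate $T^n\colon[a_s,b_s]\to[c_2,c]$ on basic arcs changes that cardinality count.

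The paper obtains uncountability of $\E_F$ and $\E_S$ from a different binary branching, a combinatorial one in the kneading data rather than a spatial one. Using Proposition~\ref{lem:strongerBBD} it first shows that $\{c_{S_k}: Q(k)\le 1\}$ is dense in $[c_2,c_1]$ when $s\in\cA$, and then builds inductively, with two admissible choices of $k_i$ at each step, uncountably many sequences $(k_i)$ satisfying the hypotheses of Proposition~\ref{prop:spiral}; one regime of choices forces $\lim_i Q(k_i+1)=\infty$, producing direct spirals and hence spiral endpoints, while the other forces $\liminf_i Q(k_i+1)\le 1$, producing basic $\sin(1/x)$-continua whose bar endpoints are flat. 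Distinct sequences yield distinct subcontinua, hence $2^{\aleph_0}$ endpoints of each type (and, in particular, the ``when non-empty'' proviso is never used for these two sets). If you want a proof in the spirit of self-similarity, you would need each finite node of your tree to contribute a \emph{new} flat or spiral endpoint and then extract uncountably many from their closure, which is in effect what the combinatorial construction accomplishes directly.
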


\begin{proof}
	If $\omega(c)$ contains an interval, then $\omega(c) = [c_2,c_1]$ or $T$ is renormalisable, and the 
	deepest renormalisation is a unimodal map with $\omega(c) = [c_2,c_1]$.
	Hence we can assume that $\omega(c) = [c_2,c_1]$. 
	
	First we claim that $\{c_{S_k}: k\in\N, Q(k)\leq 1\}$ is dense in $[c_2, c_1]$ if $s\in \cA$. 
	Note that since $\{c_{S_k}:k\in\N\}$ is dense in $[\hat z_1,c_1]$ (cf. Proposition~\ref{lem:strongerBBD}) it follows that 
	$\{c_{S_k}: k\in\N, Q(k)=0\}$ is dense in $[c_2, c]$, and since $\{c_{S_k}:k\in\N\}$ 
	is dense in $[\hat z_2, \hat z_1]$, it follows that $\{c_{S_k}: k\in\N, Q(k)=1\}$ is dense in $[c,c_1]$.
	
	So we can find $k_1$ so that $z_1 \in (c_{S_{k_1}}, c)$ (so $Q(k_1+1) \leq 1$) and $Q(k_1-1)\leq 1$.
	Assume now by induction that $k_{i-1}$ is chosen such that
	$z_1 \in (c_{S_{k_{i-1}}}, c)$ (so  $Q(k_{i-1}+1) \leq 1$) and $Q(k_{i-1}-1)\leq 1$.
	Next choose $k_i > k_{i-1}$ such that  $Q(k_i-1)\leq 1$, $c_{S_{k_{i}-1}} \in \Upsilon_{k_{i-1}}$ 
	(so $Q(k_i) = k_{i-1}$), and in fact so close to $z_{k_{i-1}-1}$ that
	$z_1 \in (c_{S_{k_i}}, c)$. Note that this is possible since  $ f^{S_{Q(k_i)}}((z_{k_{i-1}-1} , z_{k_{i-1}}))=(c_{S_{Q(k_{i-1})}}, c)=(c_{S_{k_i-2}},c)\ni z_1$.
	Since we have a choice at each induction step, we obtain this way uncountably many sequences 
	$(k_i)_{i \geq 1}$ with $Q(k_i) = k_{i-1}$ for $i \geq 2$, $Q(Q(k_i-1)+1)$ bounded, and $\liminf_i Q(k_i+1) \leq 1$.
	
	Alternatively, we can choose by induction  $k_i > k_{i-1}$ such that $Q(k_i-1)\leq 1$, $c_{S_{k_{i-1}-1}} \in \Upsilon_{k_{i-1}}$ 
	(so $Q(k_i) = k_{i-1}$), and in fact so close to $z_{k_{i-1}}$ that
	$c_{S_{k_i}} \in (z_i, c)$, so $Q({k_i+1})> i$.
	Since we have a choice at each induction step, we obtain this way uncountably many sequences 
	$(k_i)_{i \geq 1}$ with $Q(k_i) = k_{i-1}$ for $i \geq 2$, $Q(Q(k_i-1)+1)$ bounded, and $\lim_i Q(k_i+1) = \infty$.
	
	Thus by Proposition~\ref{prop:spiral} there are uncountably many spiral points and uncountably many
	flat endpoints (at the bars of basic $\sin(1/x)$-continua). 
	Finally, Corollary~\ref{cor:nasty_pts_exist} gives that the set of nasty endpoints is uncountable as well.
\end{proof}

\begin{problem}
	If $\omega(c) =[c_2,c_1]$ are 
	the sets $\E_N, \E_F$ and $\E_S$ always uncountable when non-empty?
\end{problem}

While $s\in \cA$ guarantees that $X$ contains a copy of every continuum that arises as an inverse limit space of a core tent map, there is no known complete generalisation of maps with this property. However, we are able to show that this property cannot hold if $\omega(c)\neq [c_2,c_1]$.

\begin{proposition}\label{prop:Raines}
If $\omega(c)\neq[c_2,c_1]$ then $X$ does not contain a copy of every inverse limit space from the parametrised tent family.
\end{proposition}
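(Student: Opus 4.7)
The plan is to argue the contrapositive by exhibiting a single tent inverse limit that fails to embed in $X$. Fix any $s'\in\cA$; by the characterising property of $\cA$ (see Appendix~\ref{Appendix}) one has $\omega_{s'}(c)=[c_2,c_1]$, and Proposition~\ref{prop:foldpts} applied to $X'_{s'}$ shows that every point of $X'_{s'}$ is a folding point of $X'_{s'}$ in the topological sense. I would then show that no subcontinuum of $X$ is homeomorphic to $X'_{s'}$.

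The first ingredient is that $\omega(c)$ is nowhere dense in $[c_2,c_1]$. For $s>\sqrt 2$ the map $T_s$ is locally eventually onto on the core, so any nondegenerate subinterval $J\subseteq\omega(c)$ would satisfy $T^n(J)=[c_2,c_1]$ for some $n$, and then forward invariance of $\omega(c)$ would force $\omega(c)=[c_2,c_1]$, contrary to hypothesis; the case $s\leq\sqrt 2$ is handled analogously after passing to a periodic piece of the core. Hence $\omega(c)$ is closed and totally disconnected, and by Proposition~\ref{prop:foldpts}, $\F=\underleftarrow\lim(\omega(c),T|_{\omega(c)})$ is an inverse limit of totally disconnected compact metric spaces and is therefore itself totally disconnected. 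In particular, every continuum contained in $\F$ is a singleton.

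Now suppose for contradiction that $Y\subseteq X$ is a subcontinuum homeomorphic to $X'_{s'}$. Since $Y$ is non-degenerate, the previous paragraph shows $Y\not\subseteq\F$, and the goal is to derive a contradiction by proving $Y\subseteq\F$ nonetheless. For $y\in Y$ with $y\notin\F$, Proposition~\ref{prop:foldpts} gives $n\in\N$ with $\pi_n(y)\notin\omega(c)$; being non-folding in $X$, the point $y$ admits an open neighborhood $V\subseteq X'$ homeomorphic to $C\times I$ for a Cantor set $C$. One then needs to produce a $C'\times I'$ neighborhood of $y$ in $Y$, contradicting the folding-point property of $y$ in $Y$.

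The main obstacle I expect is precisely this last step: $Y\cap V$ is only a closed subset of $V\cong C\times I$, so the product structure is not automatically inherited. The intended argument would leverage the homeomorphism $Y\cong X'_{s'}$ to show that $Y$ accumulates onto $y$ through a Cantor family of local arcs $\{c\}\times I$ in $V$, and that this accumulation is rich enough to reconstruct a Cantor-times-arc neighborhood of $y$ inside $Y$. Turning this intuition into a rigorous local argument using the fine structure of $X'_{s'}$ at its folding points is the main technical difficulty I would expect to face.
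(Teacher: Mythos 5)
Your reduction to the case where $\omega(c)$ is nowhere dense, and the observation that $\F=\underleftarrow\lim(\omega(c),T|_{\omega(c)})$ is then totally disconnected (so that any non-degenerate continuum must leave $\F$), are correct and set up a viable contradiction. The genuine gap is exactly the step you flag yourself and do not close: to conclude that a point $y\in Y\setminus\F$ is a non-folding point \emph{of $Y$}, you would need to upgrade the closed set $Y\cap V\subset V\cong C\times(0,1)$ to a Cantor-set-times-arc neighbourhood of $y$ inside $Y$, and there is no reason this is possible --- the fibres of $Y\cap V$ over $C$ may be degenerate, isolated or half-open, and the product structure of the ambient chart is not inherited by a closed subset. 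The deeper problem is that ``being a folding point'' is a property of a point relative to its ambient space, so it does not transfer between $X$ and a subcontinuum $Y\subset X$ by a purely local product argument; you need an invariant of the model space that is visible inside a single chart $C\times(0,1)$.

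The paper sidesteps the local argument entirely: it takes $H\subset X$ homeomorphic to a core tent inverse limit with dense critical orbit, uses Corollary~\ref{cor:Rdense} to extract a non-degenerate arc $A$ of folding points inside the dense line of $H$, and then Proposition~\ref{prop:foldpts} forces $\pi_0(A)$ to be a non-degenerate interval inside $\omega(c)$, which is impossible when $\omega(c)$ is nowhere dense. If you prefer to keep your global strategy, the missing step can be repaired with arc-components instead of product neighbourhoods: take the model $Y=X'_{s'}$ with $s'\in\cA$, so that points whose arc-component is degenerate are dense in $Y$ (Corollary~\ref{cor:nasty_pts_exist}). If the image $x\in H$ of such a point had a neighbourhood $V\cong C\times(0,1)$ in $X$ with $H\not\subset V$, the boundary bumping lemma applied to the component of $x$ in $H\cap V$ would yield a non-degenerate subcontinuum of $C\times(0,1)$ through $x$; such a subcontinuum projects to a single point of $C$ and is therefore an arc, contradicting the degeneracy of the arc-component of $x$ in $H$. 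Hence all these points lie in $\F$, and since they are dense in $H$ and $\F$ is closed, $H\subset\F$; your total-disconnectedness observation then finishes the proof. As written, however, the proposal is incomplete.
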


\begin{proof}
	We only need to prove that in the case when $T$ is such that $\omega(c)$ is the Cantor set and $c$ is recurrent we cannot find every inverse limit space of the core tent map family in $X$. 
	Let $X$ be a tent inverse limit space so that $\omega(c)$ is a Cantor set and $c$ is recurrent and assume that there exists $H\subset X$ so that $H$ is homeomorphic to a tent inverse limit space $Y$ with critical orbit $\tilde{c}$ dense in $[\tilde{c}_2,\tilde{c}_1]$. Since it follows from Proposition~\ref{prop:foldpts} that every point from $H$ is a folding point, there exists a non-degenerate arc $A\subset \tilde{\cR}\subset H$ such that every $x\in A$ is a folding point. Therefore, there exists an interval $\pi_0(A)\subset [c_2,c_1]$ with $|\pi_0(A)|>0$ and such that $\pi_0(A)\subset \omega(c)$. 
	Since the Cantor set is nowhere dense, we have a contradiction.
\end{proof}

Despite the fact that we have proven the existence of nasty points in unimodal inverse limit spaces, our knowledge about 
them is limited. 
Because $s\in \cA$ if and only if the set $\{c_{S_k}: k\in \N\}$ is dense in $[c_2,c_1]$ (cf. Proposition~\ref{lem:strongerBBD}), it is not even known if nasty points always exist when $\omega(c)=[c_2,c_1]$. 
If $c$ is recurrent and $\omega(c)$ is the Cantor set, then there is no known characterisation of subcontinua of $X$. 
It is a priori possible that there exist $X$ that contain complicated subcontinua which are realized as nested intersections of other non-arc unimodal inverse 
limit spaces with recurrent critical orbit for which $\omega(c)$ is a Cantor set.
There are only some partial results on conditions precluding nasty endpoints. For example, constructions in \cite{BrBr} and \cite{Br3} provide examples of inverse limit spaces of tent maps that have exactly points, arcs, rays, arc+rays continua and/or continua homeomorphic to core tent inverse limit spaces with finite critical orbits; in these cases, there are no nasty points. Thus we pose the following problem.


\begin{problem}
	Give necessary conditions on the critical point $c$ so that the corresponding inverse limit space $X$ contains nasty points.
\end{problem}

%

To make this problem easier to study, one approach is to first answer the following problem.

\begin{problem}
	Give a symbolic characterisation of nasty points in $X$.
\end{problem}

\appendix
\section{Characterising $s\in\cA$}\label{Appendix}

We want to characterise $s\in\cA$ in terms of a kneading map/sequence. It turns out that $s\in\cA$ if and only if $\{c_{S_k}: k\in\N\}$ is dense in $[c_2, c_1]$, see Proposition~\ref{lem:strongerBBD}. Naturally, if $\{c_{S_k}: k\in\N\}$ is dense in $[c_2, c_1]$, so is $\orb(c)$.
The following proposition shows that the converse does not hold, thus giving a positive answer to Question 6.4.8.\ in \cite{BB}. Specifically, we cannot claim that $s\in\cA$ if and only if $\orb(c)$ is dense in $[c_2, c_1]$. That does not mean that the self-similarity result of \cite{BBD} does not hold for slopes for which $\orb(c)$ is dense, with a possibly more complicated construction. We pose the following problem:

\begin{problem}
	If $\orb(c)$ is dense in the core, does every neighbourhood of every point in $X$ contains a copy of every other tent inverse limit?
\end{problem}

\begin{proposition}\label{prop:counterexample}
	There exists a tent map with a dense critical orbit, such that $\{c_{S_k}\}_{k\in\N_0}$
	is not dense in $[c_2,c_1]$.
\end{proposition}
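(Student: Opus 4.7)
The plan is to translate the required non-density of $\{c_{S_k}\}_{k\in\N_0}$ into a boundedness condition on the kneading map, and then exhibit a tent map satisfying that condition together with dense critical orbit. By equation~(\ref{eq:zzz}), for every $k\ge 2$,
$$
c_{S_k}\in\Upsilon^\circ_{Q(k+1)}=(z_{Q(k+1)-1},z_{Q(k+1)})\cup(\hat z_{Q(k+1)},\hat z_{Q(k+1)-1}).
$$
Consequently, if $Q(k)\le N$ for all $k\ge 1$ (with any finite $N\in\N$), then $\{c_{S_k}\}_{k\ge 2}\subset[c_2,z_N]\cup[\hat z_N,c_1]$, so this set omits the non-empty open interval $(z_N,\hat z_N)$ about $c$ and hence cannot be dense in $[c_2,c_1]$. (The finitely many remaining points $c_{S_0},c_{S_1}$ do not affect density.)

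It therefore suffices to produce a long-branched tent map $T_s$ (i.e.\ one with bounded $Q$, cf.\ Definition~\ref{def:longBranched}) whose critical orbit is dense in the core $[c_2,c_1]$. I would construct such a map by specifying its kneading sequence inductively. Fix $N\in\N$ and an enumeration $(U_i)_{i\ge 1}$ of a countable basis of open subintervals of $[c_2,c_1]$ with rational endpoints. Starting from a short admissible prefix of $\nu$ whose associated cutting times satisfy $Q\le N$, at stage $i$ extend the current prefix by an admissible block that (i) keeps the induced kneading map bounded by $N$ and (ii) forces some new iterate $c_n$ into $U_i$. By the Milnor--Thurston realisation theorem the resulting admissible kneading sequence $\nu$ corresponds to a unique tent map $T_s$, which by construction is long-branched, has recurrent $c$, and satisfies $\orb(c)\cap U_i\neq\emptyset$ for every $i$, so $\orb(c)$ is dense.

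The main obstacle is the inductive step: one must verify that from any admissible prefix compatible with $Q\le N$ there is always an admissible extension preserving both the bound on $Q$ and steering $c_n$ into the prescribed $U_i$. Admissibility of a kneading sequence is a delicate shift-maximality condition, and together with the boundedness requirement on $Q$ it significantly restricts the pool of available extensions. To overcome this, I would work within the family of long-branched, recurrent admissible kneading maps constructed in \cite{Br0}, where the flexibility of admissible extensions is established explicitly. Inserting at each inductive stage a sufficiently long "steering" block of this flexible form drives the critical orbit through the prescribed $U_i$ while maintaining admissibility and the bound on $Q$, yielding the desired tent map.
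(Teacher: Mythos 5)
Your first paragraph is correct as far as it goes: by \eqref{eq:zzz}, a uniform bound $Q(k)\le N$ forces $c_{S_k}\notin(z_N,\hat z_N)$ for all $k\ge 2$, so $\{c_{S_k}\}_{k\in\N_0}$ would indeed fail to be dense. The fatal gap is in the reduction itself: the object you then propose to construct --- a long-branched tent map whose critical orbit is dense in the core --- does not exist, so the inductive step you defer to \cite{Br0} cannot be carried out no matter how flexible the admissible extensions are. To see this, suppose $Q(k)\le N$ for all $k$ and put $P:=\overline{\{c_{S_k}\}}_{k\in\N}\subseteq[c_2,z_N]\cup[\hat z_N,c_1]$. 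Since $S_{k+1}-S_k=S_{Q(k+1)}\le S_N$, every point of $\orb(c)$ has the form $T^j(c_{S_k})$ with $0\le j<S_N$, hence $\overline{\orb(c)}\subseteq\bigcup_{j=0}^{S_N-1}T^j(P)$. If $\orb(c)$ were dense in $[c_2,c_1]$, Baire's theorem (applied once over $j$ and once over the finitely many branches of $T^j$) would force $P$ to contain a non-degenerate interval $J$. But $P$ is forward invariant under the map $F$ of \eqref{eq:F}, which on each of the finitely many intervals $[z_{j-1},z_j)$, $(\hat z_j,\hat z_{j-1}]$, $j\le N$, covering $P$ is monotone and expands by at least $s>1$; iterating $F$ on $J$ therefore produces intervals in $P$ of geometrically growing length until one of them contains some $z_j$ or $\hat z_j$ in its interior, and one further application of $F$ then places in $P$ a non-degenerate interval with endpoint $c=T^{S_j}(z_j)$, contradicting $P\cap(z_N,\hat z_N)=\emptyset$. (This is precisely the expansion argument in the second half of the proof of Proposition~\ref{lem:strongerBBD}.) In short, bounded $Q$ is incompatible with $\omega(c)=[c_2,c_1]$; the long-branched recurrent examples of \cite{Br0} have recurrent $c$ but nowhere dense $\omega(c)$.

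This is why the paper's construction only excludes the single value $Q(k)=1$ for all large $k$ --- which by \eqref{eq:zzz} already keeps $c_{S_k}$ out of the fixed open interval $(z_0,z_1)$ and hence destroys density of $\{c_{S_k}\}$ --- while deliberately letting $Q$ be unbounded (e.g.\ $Q(j)=j-2$ in block IV of the construction). The whole difficulty of the proof is then the combinatorial bookkeeping needed to realise every admissible word at a cutting time (so that $\orb(c)$ is dense) while preserving admissibility, non-renormalisability and $Q\ne 1$; your sketch, which replaces the condition $Q\ne 1$ by the much stronger requirement that $Q$ be bounded, cannot be repaired without falling back on some such weaker condition.
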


\begin{proof}
	According to Hofbauer \cite{Hof,Br0}, a kneading sequence is admissible if and only if its kneading map 
	$Q:\N \to \N_0$ exists and satisfies 
	\begin{equation} \label{eq:hofb}
	Q(k) < k \text{ and } \{ Q(Q^2(k)+j) \}_{j \geq 1} \preceq_{lex} \{ Q(k+j) \}_{j\geq 1} \qquad \text{ for all } k \geq 1.
	\end{equation}
	Here $\preceq_{lex}$ is the lexicographical order on sequences of natural numbers and $Q(0) = 0$ by convention.
	Taking $k-1$ instead of $k$ in the left hand side of \eqref{eq:hofb},
	we have $Q(Q^2(k-1)+1) \leq Q^2(k-1) \leq Q(k-1)-1$.
	Therefore, regardless of what $Q(j)$ is for $j < k$, one can always set $Q(k)  = m$
	for any $m > Q(k-1)-1$. We can also set $m=Q(k-1)-1$ provided we take $Q(k+1)$ sufficiently large, \eg $Q(k+1)\geq Q(Q^2(k)+1)$, where if $Q(k+1)=Q(Q^2(k)+1)$ we have to take $Q(k+2)\geq Q(Q^2(k+1)+1)$, etc.
	
	The map is renormalisable if and only if there is some $k \geq 2$ such that
	$Q(k+j) \geq k-1$ for all $j \geq 0$, see \cite[Proposition 1(iii)]{Br0}, so assuming that $Q(k) \leq k-2$ for all $k \geq 2$
	prevents renormalisation.
	
	Given a word $w \in \{ 0, 1\}^n$, let $w'$ be the same word with the last letter swapped.
	Suppose that the kneading sequence $\nu$ is known up to the cutting time $S_k$.
	Let $\W_k$ denote the collection of the words $w$ such that both $w$ and $w'$ appear in
	$\nu_1 \dots \nu_{S_k}$, with the last letters of $w$ and $w'$ both at cutting times.
	(Obviously, $w \in \W_k$ if and only if $w' \in \W_k$.)
	
	We extend $\nu_1 \dots \nu_{S_k}$ in steps, every time adding a new pair of admissible 
	words $w$ and $w'$ of shortest lengths so that their last letters appear at cutting times. 
	In addition, we make sure that $Q(l) \leq l-2$ (so as to avoid renormalisations)
	and also avoid using $Q(l)=1$.
	Since every admissible word is a prefix of a word in $\cup_k \W_k$,
	the limit sequence $\nu$ corresponds to a tent map with a dense critical orbit.
	However, since $Q(k) \neq 1$ for all sufficiently large $k$, $\{c_{S_k}\}_{k\in\N}$
	is not dense in the core.
	
	So let us give the details of the construction.
	Start with 
	$$
	\nu = \nu_1\dots \nu_7 = 1.0.0.0.101.  \qquad (\text{dots indicate cutting times, and } 7 = S_4).
	$$
	Thus $\W_4 = \{ 0,1,00,01, 100, 101\}$, so the shortest missing pair is $10,11$.
	In fact, $10$ already appears, but to accommodate $11$, we extend $\nu$ to
	$$
	\nu = 1.0.0.0.101.0.101.10001011.
	$$
	The extra block $101$ is there to assure that $Q(j) \leq j-2$.
	
	Now for the general induction step, let $v$ be (one of the) shortest admissible word(s)
	not yet appearing in $\W_k$ and such that $v'$ is admissible too.
	Let $w$ be the longest common prefix of $v$ and $v'$ such that $w \in \W_k$,
	so $v = wu$ and $v' = wu'$. By switching the role of $v$ and $v'$ if necessary, 
	we can assume that $u'$ has an even number of ones in it.
	Also let $1<n' < k$ be the smallest integer such that $w'$ appears as the suffix of  $\nu_1\cdots \nu_{S_{n'}}$. 
	(If $n'=k$, then extend $\nu$ by one block $\nu_1 \dots \nu_{S_{k-1}-1}\nu'_{S_{k-1}}$.)
	
	Now extend $\nu$ as
	\begin{eqnarray*}
		\nu &=& \underbrace{\nu_1 \dots \nu_{S_k}}_{\text{previous }\nu} . 
		\underbrace{\nu_1 \dots \nu_{S_{Q(k)-1}-1}  \nu'_{S_{Q(k)-1}} .
			\dots \nu_{S_{Q(k)-2}-1}  \nu'_{S_{Q(k)-2}} \dots \nu_1 \dots \nu_{S_2-1} \nu'_{S_2}}_{\text{block I}} . \\
		&&  \underbrace{\nu_1 \dots \nu_{S_{n'}-1}\nu'_{S_{n'}} .
			u'}_{\text{block II}} .\underbrace{\nu_1 \dots \nu_{S_r-1} \nu'_{S_r}}_{\text{block III}} . 
		\underbrace{\nu_1 \dots \nu_{S_k} \dots \dots \nu'_{S_2} \nu_1 \dots \nu_{S_{n'}-1}\nu'_{S_n'} u}_{\text{block IV}}.
	\end{eqnarray*}
	\begin{itemize}
		\item[I] By setting $Q(j)= Q(j-1)-1$ for successive $j \geq k+1$, we bring down $Q$ stepwise to $2$. This is admissible since $Q(Q^2(k+j-1)+1)\leq Q^2(k+j-1)\leq Q(k+j-1)-1=Q(k+j)$ for all $j$ as above.
		Also, according to \eqref{eq:hofb}, any value of $Q$ greater than 1 is allowed directly afterwards.
		\item[II] 
		We claim that the word $\nu_1\cdots \nu'_{S_{n'}}u'$ is admissible and that the last letter of the appearance of
		$u'$ is a cutting time (see below).
		Since $w$ is the suffix of $\nu_1\dots\nu_{S_{n'}-1}\nu'_{S_n'}$, we now have $v'$ appearing  with the last letter at a cutting time. 
		
In order to explain why $\nu_1\cdots \nu'_{S_{n'}}u'$ is admissible and that we indeed have cutting times in the word $\nu_1 \dots \nu_{S_{n'}-1}\nu'_{S_{n'}} .
u'.$ as denoted, we have to introduce some additional notation as follows.

Let $\rho(j) := \min\{k > j : \nu_k \neq \nu_{k-j}\}$ ($\rho(j)$ as defined here is unrelated with a fixed point $\rho$ used in the rest of the paper) and 
recall from \eg \cite{Br0} that the co-cutting times are the $\rho$-orbit
starting at $\min\{ j > 1 : \nu_j = 1\}$, whereas the cutting times are the $\rho$-orbit
starting at $1$. An admissibility condition equivalent to (\ref{eq:hofb}) is that the sequences of
cutting times and co-cutting times are disjoint (admissibility condition A3 in \cite{Br0}, see also \cite{Thun}).

Since $n'$ is chosen minimal, the largest co-cutting time before $S_{n'}$ is 
greater than $S_{n'}-|w|$ and in particular, $S_{Q^2(n')} < |w|$
(because when there is a co-cutting time between $S_{n'}$ and $S_{n'-1}$,
then $S_{n'} - S_{Q^2(n')}$ has to be a co-cutting time, see (3.12) in \cite{GreenBook}). 
The block $\nu_1\cdots \nu'_{S_{n'}}$ is admissible, and thus $S_{n'} - S_{Q^2(n')}$
must be a co-cutting time.

Since $v'$ is an admissible word, if we mark the $\rho$-orbits inside $v'$ starting
at entries $|w|$ and $|w|-S_{Q^2(n')}$, we find them disjoint.
Therefore, if we mark the $\rho$-orbits inside $\nu_1\cdots \nu'_{S_{n'}}u'$ starting
at entries $1$ and $S_{n'}-S_{Q^2(n')}$, we find them disjoint as well.
Therefore $\nu_1\cdots \nu'_{S_{n'}}u'$ is admissible, and the same argument applies
to $\nu_1\cdots \nu'_{S_{n'}}u$.
In particular, $|\nu_1\cdots \nu'_{S_{n'}}u'|$ must indeed be a cutting time,
and since $u'$ has an even number of ones by choice, block II indeed ends at a cutting time.

		\item[III] This extra block $\nu_1 \dots \nu_{S_r-1} \nu'_{S_r}$ is there to prevent us from having 
		$Q(j)=j-1$. We choose $r$ minimal such that the extension with this block is admissible.
		\item[IV] Here we added (previous $\nu$)+ block I+ block II with the last symbol switched, so $Q(j) = j-2$ which is always allowed. 
		We now have $v$ appearing with the last letter at a cutting time. 
	\end{itemize}
	We will now verify that $|u|$ is a cutting time and $|u| \neq 2$, so that we can conclude by induction that $Q(j) \neq 1$ for this extended $\nu$.

	\begin{figure}[!ht]
		\centering
		\begin{tikzpicture}[scale=0.5]
		\draw (0,0)--(10,0); 
		\draw (1,-0.1)--(1,0.1); \node at (1,-0.5) {\small $c_a$};
		\draw (3,-0.1)--(3,0.1); \node at (3,-0.5) {\small $z_j$};
		\draw (5,-0.1)--(5,0.1); \node at (5,-0.5) {\small $c$};
		\draw (9,-0.1)--(9,0.1); \node at (9,-0.5) {\small $c_b$};
		\node at (11.3,1.5) {\small $T^m$}; \draw[->] (10.5,2.8)--(10.5,0.2); 
		\draw (0,3)--(10,3); 	  
		\draw (1,2.9)--(1,3.1); \node at (3,3.4) {\small $Z$};
		\draw (5,2.9)--(5,3.1); \node at (5,2.5) {\small $z$};
		\draw (9,2.9)--(9,3.1); \node at (7,3.4) {\small $Z'$};
		\node at (11.8,4.5) {\small $T^{n'-m}$};  \draw[->] (10.5,5.8)--(10.5,3.2); 	  
		\draw (0,6)--(10,6); 	  
		\draw (1,5.9)--(1,6.1); \node at (1,5.5) {\small $z_{n'-1}$};
		\draw (5,5.9)--(5,6.1); \node at (5,5.5) {\small $z_{n'}$};
		\draw (9,5.9)--(9,6.1); \node at (9,5.5) {\small $c$};
		\end{tikzpicture}
		\caption{Illustration of the sets $Z,Z'$ and points $c_a, c_b$}
		\label{fig:illus}
	\end{figure}
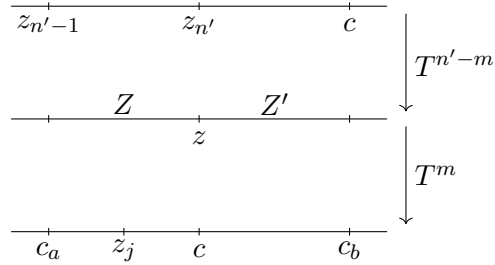

	First note that $w$ and $w'$ correspond to two adjacent cylinder sets $Z$ and $Z'$ of length $m := |w|$,
	and with some $z \in T^{-m}(c)$ as common boundary point.
	Thus there are integers  $1 \leq a,b < n'$ such that $T^m:Z \cup Z' \to [c_a, c_b]$
	is monotone onto and $[c_a,c_b] \owns c$, so $a$ and $b$ are in fact cutting times.
	Assume without loss of generality that $T^m(Z) = [c_a,c]$.
	Since both $wu$ and $wu'$ are admissible (and are the shortest words of the form $wU$ $wU'$ with this property),
	there is a closest precritical point $z_j \in [c_a,c]$ and $S_j = |u|$.
	Since $a = S_i < S_k$ is a cutting time with
	$Q(i) = j$, we get by induction $Q(i) \neq 1$, so $|u'| = S_j \neq 2$.
\end{proof}

The previous proposition in combination with the following show that one cannot use \cite{BBD} for showing that all dense critical orbit cases have the self-similarity property.

\begin{proposition}\label{lem:strongerBBD}
	The set $\{c_{S_k}: k\in\N\}$ is dense in $[c_2, c_1]$ if and only if $s\in\cA$. 
\end{proposition}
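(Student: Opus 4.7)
The plan is to establish the equivalence by translating the defining condition of $\cA$ into the Hofbauer-tower language, where the cutting times $S_k$ appear directly. Recall that $n$ is a cutting time iff the Hofbauer interval $D_n$ contains $c$, equivalently, iff there is a closest precritical point $z_k$ (and its partner $\hat z_k$) with $T^n(z_k)=c$ as a first hit and $T^j([z_k,\hat z_k])\not\ni c$ for $0<j<n$.

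For the direction $(\Leftarrow)$: Suppose $s\in\cA$. Fix $a\in[c_2,c]$ and $\delta>0$, and apply the characterising property to obtain $n$, $a_s$, $b_s$. The central claim is that $n+2$ is a cutting time $S_j$, so that $c_{S_j}=T^n(c_2)\in(a-\delta,a+\delta)$. First, from $T^n(a_s)=c_2=T^2(c)$ combined with the fact that $\orb(c)$ is infinite (so $c_m\neq c$ for $m\ge 1$), the only way for $T^n$ to fold at $a_s$ with value $c_2$ is $T^{n-2}(a_s)=c$: any other fold-source $T^j(a_s)=c$ with $j<n$ would give $T^n(a_s)=c_{n-j}=c_2$, forcing $n-j=2$. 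The same non-periodicity argument yields $T^j(b_s)\neq c$ for $0\le j<n$, so $b_s$ is a first-hit preimage of $c$ of order $n$. The monotonicity of $T^n$ on $[a_s,b_s]$ onto $[c_2,c]$ then implies $(a_s,b_s)$ contains no preimages of $c$ of order $\le n$. Propagating the branch $T^n([a_s,b_s])=[c_2,c]$ through two further iterates and tracking the Hofbauer-tower construction shows that $D_{n+2}$ is obtained from $T^2([c_2,c])$ with $c$ as one of its endpoints, confirming $n+2\in\{S_k\}$.

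For the direction $(\Rightarrow)$: Suppose $\{c_{S_k}\}$ is dense in $[c_2,c_1]$. Fix $a\in[c_2,c]$ and $\delta>0$, and pick a cutting time $S_k$ with $c_{S_k}\in(a-\delta,a+\delta)\cap[c_2,c]$. Setting $n:=S_k-2$ gives $T^n(c_2)=c_{S_k}\approx a$. To produce $a_s$ and $b_s$, use the cutting-time structure: $S_k$ provides a closest precritical point $z_k$ (or $\hat z_k$) and an associated monotone branch of $T^{S_k}$ mapping $[z_k,c]$ (resp.\ $[c,\hat z_k]$) onto an interval with endpoint $c_{S_k}$. Pulling this branch back two iterates via the monotone inverse branches of $T$ on $[0,c]$ and $[c,1]$ yields $a_s$ satisfying $T^{n-2}(a_s)=c$ (hence $T^n(a_s)=c_2$) and $b_s$ satisfying $T^n(b_s)=c$, arranged so that $T^n$ is monotone on $[c_2,a_s]$ and on $[a_s,b_s]$ as required.

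The principal technical obstacle is the forward combinatorial step: verifying that the local branch structure of $T^n$ prescribed by $\cA$ (a single fold at $a_s$ coming from $T^{n-2}(a_s)=c$, reaching $c$ at the endpoint $b_s$) is precisely the signature of $n+2$ being a cutting time in the Hofbauer tower, i.e., of $c\in D_{n+2}$. A secondary obstacle is that the characterising property of $\cA$ is stated only for $a\in[c_2,c]$, so density in $[c,c_1]$ requires an extra argument. For this one uses that each Hofbauer interval $D_{S_k}=[c_{S_k},c_{S_{Q(k)}}]$ contains $c$ with its two endpoints on opposite sides, so the left-side cutting values have right-side partners via the kneading map $Q$; combined with the dense distribution of left-side values this transmits density onto the full core $[c_2,c_1]$.
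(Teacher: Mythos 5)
Your reduction of the $\cA$-condition to cutting-time combinatorics is the right general idea, but the central claim of your $(\Leftarrow)$ direction --- that $n+2$ is a cutting time, so that $T^n(c_2)=c_{n+2}$ is itself of the form $c_{S_j}$ --- is false, and the error sits exactly where the real difficulty of the proposition lies. Your own analysis gives $T^{n-2}(a_s)=c$ and $T^{n-2}$ monotone on $[c_2,a_s]$ with image the interval with endpoints $c$ and $c_n$; but then $T^{n-1}([c_2,a_s])=[c_{n+1},c_1]$, and the required monotonicity of $T^n$ on $[c_2,a_s]$ forces $c_{n+1}>c$. What the $\cA$-data actually yields (and what the paper proves, by exhibiting a one-sided neighbourhood $[c,b_{-2}]$ of $c$ on which $T^n$ is monotone with image $[z_1,c_n]\owns c$) is that $n$ itself is a cutting time, say $n=S_k$. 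Then $D_{n+1}=[c_{n+1},c_1]\not\owns c$, so $D_{n+2}=T(D_{n+1})=[c_2,c_{n+2}]$ with $c_{n+2}\in(a-\delta,a+\delta)$; for $a<c$ and $\delta$ small this interval does not contain $c$, so $n+2$ is \emph{not} a cutting time. Consequently $s\in\cA$ only gives density of $\{c_{S_k+2}\}$ in $[c_2,c]$, and upgrading this to density of $\{c_{S_k}\}$ in all of $[c_2,c_1]$ is the main content of this direction: the paper needs the induced map $F(y)=T^{S_k}(y)$ for $y\in\Upsilon_k$ (under which $c_{S_k}\mapsto c_{S_{k+1}}$) together with a dichotomy for $P=\overline{\{c_{S_k}\}}$ --- if $P$ contains an interval then iterating $F$ forces $P=[c_2,c_1]$, while if $P$ is nowhere dense then so is $T^2(P)\supset[c_2,c]$, a contradiction. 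None of this is present in your sketch.

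The $(\Rightarrow)$ direction also has a gap. With $n=S_k-2$, the maximal monotone branch of $T^n$ with left endpoint $c_2$ is the $T^2$-push-forward of the branch of $T^{S_k}$ adjacent to $c$, so its image is $D_{S_k}=[c_{S_k},c_{S_{Q(k)}}]$. A point $a_s$ with $T^n(a_s)=c_2$ and $T^n$ monotone on $[c_2,a_s]$ can exist only if $c_2\in D_{S_k}$, i.e.\ only if $Q(k)=1$; so your construction implicitly needs density of the subfamily $\{c_{S_k}:Q(k)=1\}$ near $a$, which is strictly more than the hypothesis. The paper sidesteps this by taking $n=S_k+1$ with $c_{S_k}$ approximating a \emph{third preimage} $x\in[\hat z_0,c_{-2}]$ of the target $a$ (so that $T^n(c_2)=c_{S_k+3}=T^3(c_{S_k})$ is $\delta$-close to $a$), and assembling $a_s,b_s$ from the monotone branch $T^{S_k}:[z,c]\to[y_1,c_{S_k}]$ composed with the $T^3$-fold structure at the points $y_1,y_2$. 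Finally, your closing remark about transferring density from $[c_2,c]$ to $[c,c_1]$ via the two endpoints of $D_{S_k}$ is too vague to substitute for the paper's argument with $F$, which is what actually carries that step.
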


\begin{proof}
	Recall that $\{ S_k \}_{k \geq 0}$ denotes the set of cutting times of $T$, and
	$z_k, \hat z_k:= 1-z_k$ are the closest precritical points, i.e.,
	$T^{S_k}(z_k) = T^{S_k}(\hat z_k) = c$ and $T^j([z_k, \hat z_k]) \not\owns c$ for $0 < j < S_k$. Also recall that $s\in \cA$ if for any $a\in[c_2, c]$ and 
	$\delta>0$ there exist $n\in\N$ and $c_2<a_s<b_s<c_1$ such that $T_s^n(c_2)\in (a-\delta, a+\delta)$, 
	$T_s^n(a_s)=c_2$, $T_s^n(b_s)=c$ and $T_s^n$ is monotone on $[c_2, a_s]$ and $[a_s, b_s]$. 
	
	Assume that $\{c_{S_k}: k\in\N\}$ 
	is dense in $[c_2, c_1]$. Fix $a\in[c_2, c]$ and $\delta>0$. 
	Let $c_{-1} = \hat z_0$ and denote by $c_{-2}$ the point in $(c_{-1}, c_1)$ such that 
	$T^2(c_{-2})=c$, if such a point exists. Otherwise take $c_{-2}=c_1$. 
	Since $T^3([c_{-1}, c_{-2}])\supset[c_2, c]$, there exists $x\in [c_{-1}, c_{-2}]$ such that $T^3(x)=a$. Find $k\in\N$ such that $c_{S_k}\in (x-\delta/{s^3}, x+\delta/{s^3})$. 
	Then (see Figure~\ref{fig:bbd}) there are $y_1<y_2<c_{S_k}$ such that $T^3(y_1)=c, T^3(y_2)=c_2$, $T^3(c_{S_k})$ is in the $\delta$-neighbourhood of $a$, and $T^3$ is linear on $[y_1, y_2]$ and $[y_2, c_{S_k}]$. Also, since $S_k$ is a cutting time, there is an interval $[z, c]$ such that $T^{S_k}([z, c])=[y_1, c_{S_k}]$ is one-to-one, and thus the conditions in the definition of $\mathcal A$ are satisfied for $n=S_k+1$, $b_s=T^2(z)$, and $a_s\in[c_2, b_s]$ the unique point such that $T^{S_k+1}(a_s)=c_2$.
	
	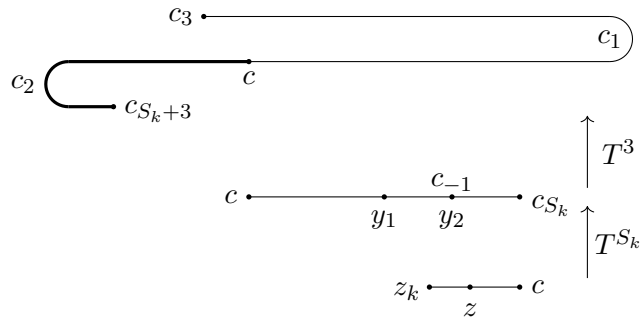
\begin{figure}[!ht]
		\centering
		\begin{tikzpicture}[scale=6]
		\draw (0.8,0)--(1,0);
		\draw[->] (1.15,0.02)--(1.15,0.18);
		\draw (0.4,0.2)--(1,0.2);
		\draw[->] (1.15,0.22)--(1.15,0.38);
		\draw[very thick] (0.1,0.4)--(0,0.4);
		\draw[very thick,domain=90:270] plot ({0+0.05*cos(\x)}, {0.45+0.05*sin(\x)});
		\draw[very thick] (0,0.5)--(0.4,0.5);
		\draw (0,0.5)--(1.2,0.5);
		\draw[domain=270:450] plot ({1.2+0.05*cos(\x)}, {0.55+0.05*sin(\x)});
		\draw (1.2,0.6)--(0.3,0.6);
		\node at (1.22,0.1) {\small $T^{S_k}$};
		\node at (1.22,0.3) {\small $T^{3}$};
		\node[circle,fill, inner sep=0.7] at (0.8,0){};
		\node[circle,fill, inner sep=0.7] at (0.89,0){};
		\node[circle,fill, inner sep=0.7] at (1,0){};
		\node[circle,fill, inner sep=0.7] at (0.4,0.2){};
		\node[circle,fill, inner sep=0.7] at (0.7,0.2){};
		\node[circle,fill, inner sep=0.7] at (0.85,0.2){};
		\node[circle,fill, inner sep=0.7] at (1,0.2){};
		\node[circle,fill, inner sep=0.7] at (0.1,0.4){};
		\node[circle,fill, inner sep=0.7] at (0.4,0.5){};
		\node[circle,fill, inner sep=0.7] at (0.3,0.6){};
		\node at (0.75,-0.01) {\small $z_k$};
		\node at (0.89,-.05) {\small $z$};
		\node at (1.04,0) {\small $c$};
		\node at (0.36,0.2) {\small $c$};
		\node at (0.7,0.15) {\small $y_1$};
		\node at (0.85,0.15) {\small $y_2$};
		\node at (1.07,0.18) {\small $c_{S_k}$};
		\node at (0.85,0.23) {\small $c_{-1}$};
		\node at (0.2,0.39) {\small $c_{S_k+3}$};
		\node at (-0.1,0.45) {\small $c_2$};
		\node at (0.4,0.465) {\small $c$};
		\node at (1.2,0.55) {\small $c_1$};
		\node at (0.25,0.6) {\small $c_3$};
		\end{tikzpicture}
		\caption{A step in the proof of Proposition~\ref{lem:strongerBBD}.} 
		\label{fig:bbd}
	\end{figure}
	For the other direction, take $s\in\cA$, and assume by  contradiction that $\{c_{S_k}: k\in\N\}$ is not dense in $[c_2, c_1]$. Note that if there are $c_2<a_s<b_s<c_1$ and $n\in\N$ such that $T^n|_{[c_2,a_s]}$ and $T^n|_{[a_s, b_s]}$ are one-to-one and $T^n(a_s)=c_2, T^n(b_s)=c$, then $T^{n-2}$ maps $[c_2, b_s]$ one-to-one onto $[z_1, c_n]$ if $c<c_n$, or onto $[c_n, \hat z_1]$, if $c_n<c$. In any case, since there is an interval $[c,b_{-2}]$ mapped one-to-one onto $[c_2, b_s]$, we conclude that $n$ is a cutting time. Since $s\in\cA$, it follows that $\{c_{S_k+2}: k\in\N\}$ is dense in $[c_2, c]$.
	
	Define a map
	\begin{equation}\label{eq:F}
	F:[c_2, c_1] \setminus \{ c \} \to [c_2, c_1] \setminus \{ c \}, \quad 
	y \mapsto T^{S_k}(y) \text{ if } y \in \Upsilon_k,
	\end{equation}
	for $k\in \N_0$, see Figure~\ref{fig:F}.
	
	\begin{figure}[!ht]
		\centering
		\begin{tikzpicture}[scale=4]
		\draw (0,0)--(0,1)--(1,1)--(1,0)--(0,0);
		\draw[dashed] (0,0.45)--(1,0.45);
		\draw[dashed] (0.45,0)--(0.45,1);
		\node at (0,-0.1) {\small $c_2$};
		\node at (0.45,-0.1) {\small $c$};\draw (0.45,-0.02)--(0.45,0.02);
		\node at (1,-0.1) {\small $c_1$};	
		\node at (0.1,-0.1) {\small $z_0$};\draw (0.1,-0.02)--(0.1,0.02);
		\node at (0.81,-0.1) {\small $\hat z_0$};\draw (0.8,-0.02)--(0.8,0.02);	
		\node at (0.25,-0.1) {\small $z_1$};\draw (0.25,-0.02)--(0.25,0.02);
		\node at (0.66,-0.1) {\small $\hat z_1$}; \draw (0.65,-0.02)--(0.65,0.02);	
		\draw[thick] (0.1,0.45)--(0,0.2);
		\draw[thick] (0.8,0.45)--(1,0);
		\draw[thick] (0.25,0.45)--(0.1,1);
		\draw[thick] (0.65,0.45)--(0.8,1);
		\draw[thick] (0.32,0.45)--(0.25,1);
		\draw[thick] (0.58,0.45)--(0.65,1); 
		\draw[thick] (0.35,0.45)--(0.32,0);
		\draw[thick] (0.55,0.45)--(0.58,0);  
		\draw[thick] (0.35,0.2)--(0.37,0.45);
		\draw[thick] (0.55,0.2)--(0.53,0.45);  
		\draw[thick] (0.37,0.6)--(0.38,0.45);
		\draw[thick] (0.53,0.6)--(0.52,0.45); 
		\node at (-0.07, 0.2) {\small $c_3$};
		\node at (-0.07, 0.6) {\small $c_5$};
		\draw (-0.01,0.6)--(0.01,0.6);
		\draw (0.32,-0.01)--(0.32,0.01);	
		\draw (0.35,-0.01)--(0.35,0.01);	
		\draw (0.37,-0.01)--(0.37,0.01);	
		\draw (0.38,-0.01)--(0.38,0.01);	
		\draw (0.58,-0.01)--(0.58,0.01);	
		\draw (0.55,-0.01)--(0.55,0.01);	
		\draw (0.53,-0.01)--(0.53,0.01);	
		\draw (0.52,-0.01)--(0.52,0.01);	
		\end{tikzpicture}
		\caption{The map $F$ for $\nu=1.0.0.11.101.10010\ldots$}
		\label{fig:F}
	\end{figure}
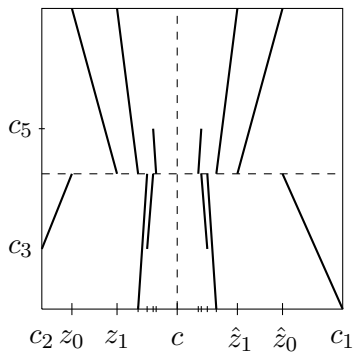
	
	By Equation~\ref{eq:zzz}, $c_{S_k}\in \Upsilon_{Q(k+1)}$, so it follows that $F(c_{S_k})=T^{S_{Q(k+1)}}(c_{S_k})=c_{S_{k+1}}$ for every $k\in\N_0$.

	Let $P := \overline{ \{ c_{S_k} \}}_{k \in\N_0} $ and recall that $P\neq[c_2, c_1]$ by assumption. Thus if $y\in P\setminus\{z_k: k\in\N_0\}$, then $F(y)\in P$.
	Assume by contradiction that $P$ contains an interval $J\subsetneq[c_2, c_1]$. Without loss of generality we can 
	take $J=(y, z_{k})$ for some $y\in(z_{k-1}, z_{k})$ (otherwise iterate and use 
	that $F(c_{S_k})=c_{S_{k+1}}$). 
	Then $F(J)=(T^{S_k}(y), c)$ and $F(J)\subset P$.
	It follows from \cite[Proposition 6.2.12]{BB} that $\omega(c)$ is nowhere dense if
	$\liminf_{k\geq 0} Q(k) \geq 2$, so we can assume that for every $\eps>0$ the interval $(c-\eps, c)$ contains $z_k$ 
	such that $F^2(J) \supset F([z_k, z_{k+1})) = [c_2, c)$ or $(c, c_1]$. We can further conclude that $[c_2, c]$ or $[c, c_1]$ is contained in $P$. But then  $P\supset F^2(P)=[c_2, c_1]$, which is a contradiction.
	
	We conclude that $P$ is nowhere dense and thus $T^2(P)\supset [c_2,c]$ is also nowhere dense, which is a contradiction. 
\end{proof}

\end{document}